\def\namedlabel#1#2{\begingroup
   \def\@currentlabel{#2}%
   \label{#1}\endgroup
}
\newcommand*{\barfix}[2][.175ex]{%
  \mathpalette{\@barfix{#1}}{#2}%
}
\newcommand*{\@barfix}[3]{%
  \vbox{%
    \kern#1\relax
    \hbox{$#2#3\m@th$}%
  }%
}
\newtheorem{theorem}{Theorem}
\newtheorem{thm}{Theorem}[section]
\newtheorem{lemma}[thm]{Lemma}
\newtheorem{proposition}[thm]{Proposition}
\newtheorem{conjecture}[thm]{Conjecture}
\newtheorem{observation}[thm]{Observation}
\newtheorem{question}[thm]{Question}
\theoremstyle{definition}
\newtheorem{rem}{Remark}
\newtheorem{definition}[thm]{Definition}
\newtheorem{remark}[thm]{Remark}
\newcommand{\footremember}[2]{%
    \footnote{#2}
    \newcounter{#1}
    \setcounter{#1}{\value{footnote}}%
}
\newcommand{\footrecall}[1]{%
    \footnotemark[\value{#1}]%
} 
\newcommand{\whp}{%
    \textbf{whp}
} 
\newcommand{\Whp}{%
    \textbf{Whp}
} 
\def\cC{\mathcal{C}}
\def\cF{\mathcal{F}}
\def\cP{\mathcal{P}}
\def\cE{\mathcal{E}}
\def\cU{\mathcal{U}}
\def\cS{\mathcal{S}}
\def\e{\mathrm{e}}
\def\eps{\varepsilon}
\def\Aux{\mathrm{Aux}}
\def\ind{\mathds{1}}
\def\agp{\mathrm{AGP}}
\def\interior{\mathrm{Int}}
\title{\vspace{-1.5cm} Nearly spanning cycle in the percolated hypercube} 
\author{%
Michael Anastos \footremember{alley1}{\scriptsize{Institute of Science and Technology Austria (ISTA), Klosterneurburg 3400, Austria. Email:  michael.anastos@ist.ac.at.}}%
\and Sahar Diskin \footremember{alley2}{\scriptsize{School of Mathematical Sciences, Tel Aviv University, Tel Aviv 6997801, Israel. Emails: sahardiskin@mail.tau.ac.il, krivelev@tauex.tau.ac.il.}}%
\and Joshua Erde \footremember{alley3}{\scriptsize{Institute of Discrete Mathematics, Graz University of Technology, Steyrergasse 30, 8010 Graz, Austria. Emails: erde@math.tugraz.at, kang@math.tugraz.at.}}%
\and Mihyun Kang \footrecall{alley3}%
\and Michael Krivelevich \footrecall{alley2}%
\and Lyuben Lichev \footremember{alley4}{\scriptsize{Institute of Statistics and Mathematical Methods in Economics, Technical University of Vienna, A-1040 Vienna, Austria. Email: lyuben.lichev@tuwien.ac.at.}}%
}
\date{\vspace{-1.5cm}}
\begin{document}
\maketitle

\begin{abstract}
Let $Q^d$ be the $d$-dimensional binary hypercube. We form a random subgraph $Q^d_p\subseteq Q^d$ by retaining each edge of $Q^d$ independently with probability $p$. 
We show that, for every constant $\eps>0$, there exists a constant $C=C(\eps)>0$ such that, if $p\ge C/d$, then with high probability $Q^d_p$ contains a cycle of length at least $(1-\eps)2^d$.
This confirms a long-standing folklore conjecture, stated in particular by Condon, Espuny D{\'\i}az, Gir{\~a}o, K{\"u}hn, and Osthus [Hamiltonicity of random subgraphs of the hypercube, Mem. Amer. Math. Soc. 305 (2024), No. 1534].
\end{abstract}

\section{Introduction}
The \textit{$d$-dimensional binary hypercube} $Q^d$ is the graph whose vertex set is $\{0,1\}^d$, and an edge is drawn between two vertices if and only if they differ in a single coordinate.  The \textit{percolated hypercube} $Q^d_p$ is obtained by retaining each edge of $Q^d$ independently and with probability $p$. The hypercube and its random subgraphs arise naturally in many contexts and have received much attention in combinatorics, probability and computer science.

The study of the percolated hypercube $Q^d_p$ was initiated by Sapo\v{z}enko \cite{S67} and by Burtin \cite{B77} who showed that the threshold for connectivity is $p=\frac{1}{2}$. This result was subsequently strengthened by Erd\H{o}s and Spencer \cite{ES79} who further asked whether $Q^d_p$ undergoes a phase transition around $pd=1$ with respect to its component structure similar to the one in the classic binomial random graph $G(n,p)$. Ajtai, Koml\'os, and Szemer\'edi confirmed this~\cite{AKS81}, with subsequent work by Bollob\'as, Kohayakawa, and \L{}uczak \cite{BKL92}, showing that  if $pd<1$ then \textbf{whp}\footnote{Short for {\em with high probability}, meaning with probability tending to one as $d$ tends to infinity.} all the connected components of $Q^d_p$ are small, whereas if $pd>1$ then \textbf{whp} $Q^d_p$ contains a unique giant component containing a linear fraction of the vertices (see \cite{K23} for a simple and self-contained proof). 
Bollob\'as \cite{B90} further showed that, just as in $G(n,p)$, the thresholds in $Q^d_p$ for the disappearance of isolated vertices, for connectivity, and for the existence of a perfect matching all coincide.

In the binomial random graph model, already in their seminal 1960 paper, Erd\H{o}s and R\'enyi~\cite{ER60} asked whether the giant component typically contains a cycle of linear length. This was resolved by Ajtai, Koml\'os, and Szemer\'edi~\cite{AKS81a} who showed that, for any constant $C>1$, when $p=C/n$, $G(n,p)$ typically contains a cycle of length $(1-o_C(1))n$ (see also the independently obtained result of Fernandez de la Vega \cite{Fdlv79}). Later work by Ajtai, Komlós, and Szemerédi~\cite {AKS85}, and independently by Bollobás~\cite {B84}, obtained a hitting time result for the emergence of a Hamilton cycle in the binomial random graph. Given the similarities between the models, a folklore conjecture reiterated over the years (see, for example, the 2014 ICM survey by Frieze~\cite{F14}, \cite[Question 5.5]{EKK22} and~\cite[Question 77]{F19}), is that the percolated hypercube $Q^d_p$ should exhibit similar behaviour with respect to the typical existence of long cycles as the binomial random graph.

The subject was advanced by the recent breakthrough of Condon, Espuny Díaz, Gir\~ao, K\"uhn, and Osthus~\cite{CDGKO21} who showed a hitting time result implying that, in particular, $p=\frac{1}{2}$ is the threshold for the existence of a Hamilton cycle in $Q^d_p$. 
In the same paper, they explicitly conjectured the typical existence of long cycles in the sparser regime.
\begin{conjecture}[Conjecture 1.3 of \cite{CDGKO21}]\label{conj: CDGK021}
Suppose that $p=p(d)$ satisfies that $pd\to\infty$. Then, \textbf{whp}, $Q^d_p$ contains a cycle of length $(1-o(1))\, 2^d$.
\end{conjecture}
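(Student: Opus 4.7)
The plan is to prove the stronger finite-density statement from the abstract, which implies the conjecture by monotonicity: for every constant $\eps>0$ there exists $C=C(\eps)$ such that if $p=C/d$, then \textbf{whp} $Q^d_p$ contains a cycle of length at least $(1-\eps)2^d$. The general strategy has three stages: (a) produce a long \emph{path} in a first instance of the percolation, (b) use reserved "booster" edges from an independent instance to perform rotations, and (c) close the rotated path into a nearly spanning cycle, absorbing leftover vertices.

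\textbf{Step 1 (Sprinkling).} Write $p=p_1+p_2$ with $p_1=(1-\delta)p$ and $p_2=\delta p$ for a small $\delta=\delta(\eps)$; then $Q^d_p$ stochastically dominates $Q^d_{p_1}\cup Q^d_{p_2}$ with the two random subgraphs independent, and both $p_1 d$ and $p_2 d$ are large constants provided $C$ is large in terms of $\eps$.

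\textbf{Step 2 (Long path).} Using the Bollob\'as--Kohayakawa--\L uczak bound on the giant component (which covers $(1-o_C(1))2^d$ vertices when $p_1 d\gg 1$) together with a DFS-based path-finding argument applied inside the giant, produce \textbf{whp} a path $P\subseteq Q^d_{p_1}$ covering at least $(1-\eps/3)2^d$ vertices. The DFS argument works because the components of $Q^d_{p_1}$ outside the giant are small, so at any stage of the DFS the "unexplored" stack can be taken large, and standard arguments then yield a long path.

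\textbf{Step 3 (Rotations, closing, absorbing).} To convert $P$ into a cycle, apply P\'osa rotations in $Q^d_{p_1}\cup Q^d_{p_2}$. The aim is to show that the set of pairs $(u,v)$ of endpoints that can be realised by a maximum-length path (perhaps after peeling off at most $\eps 2^d/10$ vertices from $P$) has size $\Omega(2^{2d})$. With such a family at hand, the expected number of edges of $Q^d_{p_2}$ (or even $Q^d_p$) between rotated endpoint pairs is large, and a second-moment / union-bound argument produces one such edge \textbf{whp}, yielding a long cycle $C^*$. A final absorbing step — using short alternating structures (e.g.\ $4$-cycles in $Q^d$ with sprinkled chords) reserved in a third independent sprinkling round — incorporates any remaining vertices of the giant into $C^*$ until its length reaches $(1-\eps)2^d$.

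\textbf{Main obstacle.} Step 3 is the crux. Unlike $G(n,C/n)$, the percolated hypercube at $p=C/d$ has only \emph{very weak} vertex expansion inside its giant component: sets of size up to $\e^{-\Theta(C)}2^d$ may fail to expand by a constant factor, and the trivial P\'osa argument breaks. Overcoming this likely requires (i) a multi-scale sprinkling that reveals boosters on several density scales; (ii) exploiting the coordinate structure of $Q^d$, partitioning into random subcubes of well-chosen dimension and showing that the rotation process behaves pseudorandomly across the partition; and (iii) a dedicated treatment of vertices in "poorly expanding" sets, which have to be absorbed into $C^*$ by local subcube surgery rather than by global rotations. Reconciling these weak-expansion obstructions with the need for $\Omega(2^{2d})$ rotated endpoint pairs is, in my view, the main technical difficulty of the proof.
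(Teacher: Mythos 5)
Your proposal has a genuine and serious gap, and moreover it is not the route the paper takes.

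\textbf{Step 2 already contains the whole difficulty.} You assert that a DFS inside the giant component of $Q^d_{p_1}$ produces a path covering $(1-\eps/3)2^d$ vertices, but this is unjustified and is essentially the statement being proved. The standard DFS long-path argument in $G(n,C/n)$ works because at any point of the exploration \emph{every} edge between the processed set and the unexplored set is a potential edge, so if both sets are of linear size, many edges have been queried and one must succeed. This fails completely in $Q^d$: each vertex has only $d$ potential neighbours out of $2^d$ vertices, so there is no contradiction in having both the processed and the unexplored set large with no edges between them. Indeed, before this paper the best known lower bound for long cycles in the supercritical $Q^d_p$ was $\Omega(2^d/(d\log d))$, and the existence of a nearly spanning path at $p=C/d$ was open. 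Your Step 2 begs the question. Step 3 you honestly flag as the main obstacle, but the ideas sketched (multi-scale sprinkling, pseudorandomness across subcubes, local subcube surgery for poorly expanding sets) are not developed into an argument, and the P\'osa/absorber framework is not the mechanism the paper ultimately uses.

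\textbf{What the paper actually does} is orthogonal to your sprinkling-plus-rotation plan and exploits the layered structure of $Q^d$ throughout. It first splits $V(Q^d)$ into a main set $V_1$ and small random reservoirs $V_2,V_3$. For each pair of adjacent middle layers, the bipartite graph $Q^d[L_{i,i+1}[V_1]]$ is nearly regular, so K\"onig's theorem applied to the percolated graph yields two large matchings whose union is a near-vertex-decomposition into short paths/cycles (Lemma~\ref{lem:paths_in_M}). A DFS is then run, but on an \emph{auxiliary} unbalanced bipartite graph whose two sides are path-segments and reservoir vertices in $L_i[V_2]$, not on the hypercube itself; this merges short paths into paths of length $\omega_C(d)$ (Proposition~\ref{prop: long paths}). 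The remaining merges are done via the Merge-Or-Grow process, growing expanding trees layer-by-layer down from the paths' end-segments, analysed via Kruskal--Katona expansion (Lemmas~\ref{lem:generalstep},~\ref{lem:step 1}) and a coupling with a monotone-path process on a mixed-percolated subcube (Lemma~\ref{lem:domination}, Lemma~\ref{lem:paths in AGP}). This reduces the family to at most six paths whose end-trees have many leaves in a low layer of $Q_0$ with small, disjoint supports (Proposition~\ref{prop:part 2}). Finally, these six paths are stitched into one cycle using fresh unexplored subcubes of dimension $\Theta(d)$ inside $Q_1$ whose giants connect the required leaves (Section~\ref{sec: part 3}). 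No P\'osa rotations, no booster/absorber sprinkling in your sense; the "cycle closing" step is solved by a single low-layer subcube detour, not by rotating endpoints.
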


In this paper, we confirm this conjecture in a strong quantitative form.
\begin{theorem}\label{thm:main}
For every constant $\eps>0$, there exists a constant $C=C(\eps)>0$ such that, if $p=p(d)\in [0,1]$ satisfies $p\ge C/d$, then \whp $Q^d_p$ contains a cycle of length at least $(1-\eps)\, 2^d$.
\end{theorem}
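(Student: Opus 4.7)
The plan is to expose the edges of $Q^d_p$ in two sprinkling rounds: write $p = p_1 + p_2$ with $p_1, p_2 \asymp C/d$, use $Q^d_{p_1}$ to locate a large subgraph with strong structural properties, and use $Q^d_{p_2}$ as a reservoir of booster edges to build and close a long cycle via a rotation-extension argument.

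First I would extract a suitable core. By the Ajtai--Koml\'os--Szemer\'edi / Bollob\'as--Kohayakawa--\L{}uczak theorem, $Q^d_{p_1}$ contains \whp\ a unique giant component $L$ of size $(1-o_C(1))2^d$. Pruning low-degree vertices and small pendant substructures yields a subgraph $H\subseteq L$ on at least $(1-\eps/2)2^d$ vertices with minimum degree $\ge 2$. The key deterministic input is the edge-isoperimetric inequality of Harper/Frankl for $Q^d$: every $A\subseteq V(Q^d)$ with $|A|\le 2^{d-1}$ satisfies $e_{Q^d}(A,A^c)\ge |A|\log_2(2^d/|A|)$. Combined with concentration of percolation, this would produce good vertex expansion in $H$ for sets of small size (up to polylogarithmic, where the $Q^d$-cut is $\asymp d|A|$ and so $\asymp C|A|$ percolated edges cross), and would rule out empty cuts in $Q^d_{p_1}$ for sets of linear size $\Theta(2^d)$ (where the cut is $\asymp 2^d$, giving $\asymp C2^d/d\gg 1$ present edges).

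Next I would apply a P\'osa-type rotation-extension argument to $H$ with the aid of the sprinkled edges $Q^d_{p_2}$. Starting from a longest path $P$ in $H\cup Q^d_{p_2}$, the expansion of $H$ together with rotations produces a large set $X$ of alternative endpoints of paths on $V(P)$. Assuming for contradiction that $|V(P)|<(1-\eps)2^d$, the unvisited set $U := V(H)\setminus V(P)$ has size $\ge \eps 2^d/2$, and \whp\ a sprinkled edge from $Q^d_{p_2}$ joins $X$ to $U$, extending the path. Iterating this extension step would yield a path of length $\ge (1-\eps)2^d$ in $H\cup Q^d_{p_2}$, which can then be closed into a cycle by a similar sprinkling argument producing a chord between two rotated endpoints.

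The hardest part will be establishing vertex expansion of the core $H$ at intermediate scales. For polylogarithmic-size sets, near-maximal vertex isoperimetry in $Q^d$ easily survives percolation; for sets of linear size $\Theta(2^d)$, it is enough to show that the percolated cut is non-empty. The intermediate regime --- sets of size roughly $2^{d}/\mathrm{poly}(d)$ --- is where the hypercube's isoperimetric bound is weakest relative to what rotation-extension requires, and where a carefully (possibly scale-dependent) pruning of the giant component, supplemented by booster arguments using $Q^d_{p_2}$, will be needed to guarantee expansion strong enough to carry rotation-extension all the way to near-Hamiltonicity.
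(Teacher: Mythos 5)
Your proposal has a genuine gap, and it sits exactly where you flagged it --- the intermediate-scale expansion --- but it is not a technical detail to be patched; it is a fundamental obstruction that forces a different approach. The hypercube, unlike $G(n,C/n)$, has sets of intermediate size with \emph{much} smaller edge-boundary than Harper's bound applied to ``typical'' sets suggests, because the extremizers are subcubes. Concretely, a subcube $A$ of codimension $\log_2 d$ has $|A| = 2^d/d$ vertices and only $e_{Q^d}(A,A^c) = |A|\log_2 d$ boundary edges, so in $Q^d_p$ (or in the sprinkled $Q^d_{p_2}$) the expected number of crossing edges is $\Theta(C\,|A|\log d/d) = o(|A|)$. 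In other words, for sets at this scale the percolated cut is asymptotically \emph{smaller than the set itself}, so no vertex-expansion statement of the form $|N(A)| \geq c|A|$ (let alone $|N(A)| \geq 2|A|$, which P\'osa rotation-extension needs up to linear size) can hold, no matter how you prune the giant component or how you split $p$ into $p_1 + p_2$. Your sprinkling reservoir $Q^d_{p_2}$ is just as sparse and inherits the same bottlenecks, so it cannot supply the missing boosters either. By contrast, in $G(n,C/n)$ the edge-boundary of any set of size $a$ is $\Theta(a(n-a)/n)$, which after percolation remains comparable to $a$ for $a = o(n)$; there is no subcube phenomenon. This is why ``near-maximal isoperimetry survives percolation'' is simply false in $Q^d$ at intermediate scales.

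The paper sidesteps expansion-based global arguments entirely. Rather than trying to establish vertex expansion and run rotation-extension, it builds a near-spanning family of disjoint paths in the $(1+o(1))2^d$ vertices of the middle layers (via near-regular bipartite subgraphs, K\"onig matchings, and a modified DFS on an auxiliary bipartite graph), and then merges those paths down the layers through a tree-growing ``Merge-Or-Grow'' process coupled with monotone-path percolation in subcubes. Crucially, this exploits the layered structure of $Q^d$ and the abundance of vertex-disjoint subcubes inside $Q_1$ (Lemmas~\ref{lem:firstsubcubes} and~\ref{lem:secondsubcube}) --- precisely the ``bottleneck'' objects that defeat your isoperimetric route are here recruited as tools. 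If you want to salvage a rotation-extension route, you would need an entirely new expansion notion that tolerates subcube bottlenecks, and as stated your plan does not supply one.
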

We remark that the dependence of $C$ on $\eps$ exhibited by our proof of \Cref{thm:main} is inverse polynomial. 

Our proof involves several innovations which, we believe, could be useful in future works. 
Among them is a technique for finding approximate vertex-decompositions of percolated regular bipartite graphs into long paths (Sections~\ref{sec:part 1a} and~\ref{sec: part 1b}), a tree-exploration algorithm in the hypercube (Section \ref{sec:2(a)}), and a coupling with a mixed percolation on the hypercube and monotone paths therein (Section~\ref{sec:2(b)}). See Section~\ref{sec:outline} for an outline of the proof. 

\begin{rem}
\Cref{thm:main} holds in the more general context of mixed percolation. Namely, with minor changes in our arguments, the following statement can be shown: For all constants $\eps,\delta>0$, there exists a constant $C=C(\eps,\delta)>0$ such that, if $p=p(d)\in [0,1]$ satisfies $p\ge C/d$, then \whp the graph $Q^d_p(\delta)$ obtained from $Q^d_p$ after vertex-percolation with parameter $\delta$ contains a cycle of length at least $(1-\eps)\, \delta\, 2^d$ (see Section \ref{sec: notation} for the formal definition of $Q^d_p(\delta)$).
\end{rem}

Let us conclude the introduction with several open problems and conjectures. While we show the typical existence of a nearly-spanning cycle for $p=C/d$ for large enough $C$, it still remains open whether $Q^d_p$ contains a cycle of linear length in the \emph{supercritical} regime when $p=\frac{1+\eps}{d}$ for some small constant $\eps>0$. 
Results in this direction were obtained in \cite{DEKK24,EKK22} where it was shown that, in this regime, $Q^d_{p}$ typically contains a cycle of length $\Omega(2^d/(d\log d))$. 
\begin{conjecture}
Let $\eps>0$ be a constant, and let $p=p(d)=\frac{1+\eps}{d}$. Then, there is a constant $c = c(\eps) > 0$ such that \textbf{whp} $Q^d_p$ contains a cycle of length at least $c\, 2^d$.
\end{conjecture}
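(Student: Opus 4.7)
The plan is to emulate the classical Ajtai--Koml\'os--Szemer\'edi argument for $G(n,p)$ at $p=(1+\eps)/n$, adapted to the hypercube geometry. I would use a two-round sprinkling, writing $p=(1+\eps)/d$ as $1-(1-p_1)(1-p_2)$ with $p_1=(1+\eps/2)/d$ and $p_2$ of order $\eps/d$, so that $Q^d_p$ stochastically dominates the union of two independent copies $Q^d_{p_1}\cup Q^d_{p_2}$. The first copy is used to extract a long path, and the second to close it into a cycle.

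The delicate step is to produce a path of length $\Omega(2^d)$ in $Q^d_{p_1}$ via a DFS exploration. During DFS the vertex set of $Q^d$ is partitioned into the unexplored set $U$, the current stack $S$ (which forms a path in $Q^d_{p_1}$), and the processed set $T$, and by construction there are no $Q^d_{p_1}$-edges between $S$ and $U$. In $G(n,p)$, this forces $\max(|S|,|U|)\le n-\Omega(n)$ once $\Omega(n)$ vertices have been touched, hence $|S|=\Omega(n)$ at some intermediate time. For $Q^d_{p_1}$, I would combine the same framework with the edge-isoperimetric inequality of the hypercube and Janson-type concentration: if at some moment both $|S|$ and $|U|$ have size of order $2^d$, the $Q^d$-edge boundary between $S$ and $U$ has size $\Omega(2^d)$, which after thinning by $p_1$ should violate the absence of $p_1$-edges across this interface \whp. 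Choosing an appropriate stopping time then yields a DFS stack, and hence a path in $Q^d_{p_1}$, of length $\alpha(\eps)\cdot 2^d$ for some constant $\alpha(\eps)>0$.

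The second step, closing the path into a cycle of comparable length, I would handle by P\'osa-style rotations using the $p_2$-edges as boosters: each rotation enlarges the set of admissible endpoints, and after $\Theta(d)$ rounds the endpoint set has size $\Omega(2^d)$, at which point a single $p_2$-edge between two such endpoints closes a cycle covering most of the original path. The main obstacle lies in the first step: the previous approaches of~\cite{EKK22,DEKK24} lose a factor of $d\log d$ precisely because the pointwise edge-isoperimetric profile of $Q^d$ is tight only for Hamming balls, while DFS-explored subsets are typically far from extremal. Overcoming this almost certainly requires an averaged or amortised isoperimetric statement tailored to DFS-explored sets, and adapting the tree-exploration algorithm and the coupling with monotone paths introduced in the present paper to the regime where vertices of the giant component have $Q^d_{p_1}$-degree of bounded order seems to be a promising route.
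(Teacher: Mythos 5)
This statement is a \emph{conjecture}, which the paper explicitly leaves open (the paper's main result, Theorem~\ref{thm:main}, requires $p\ge C/d$ for a large constant $C=C(\eps)$, and the text surrounding the conjecture notes that the best known lower bound in the regime $p=(1+\eps)/d$ is $\Omega(2^d/(d\log d))$, from~\cite{DEKK24,EKK22}). So there is no ``paper's own proof'' against which to compare your attempt; what you have written is a plan of attack on an open problem, and it contains a genuine gap that you yourself flag but do not close.

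The gap is in the first step, and it is quantitative, not merely a matter of delicacy. In the DFS argument on $G(n,p)$, the invariant ``no edges between the processed set $T$ and the unexplored set $U$'' is unlikely when $|T|,|U|=\Theta(n)$ precisely because the number of \emph{potential} edges between them is $\Theta(n^2)$. In $Q^d$, with $V=S\sqcup T\sqcup U$, the Harper edge-isoperimetric inequality gives $e_{Q^d}(T\cup S,U)=\Omega(2^d)$ when both sides are $\Theta(2^d)$, but since each vertex of $S$ contributes at most $d$ cross-edges, one only gets
\[
e_{Q^d}(T,U)\ \ge\ e_{Q^d}(T\cup S,U)-d\,|S|\ \ge\ \Omega(2^d)-d\,|S|,
\]
which forces a $p_1$-edge between $T$ and $U$ only once $|S|=o(2^d/d)$. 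Hence this line of reasoning yields a stack, and so a path, of length $\Omega(2^d/d)$, not $\Omega(2^d)$; the factor $d$ loss is intrinsic to using edge-isoperimetry together with the crude bound $e(S,U)\le d|S|$, and it is exactly the loss suffered in~\cite{EKK22,DEKK24}. You acknowledge this (``an averaged or amortised isoperimetric statement tailored to DFS-explored sets'' would be needed), but no such statement is produced, and it is not clear one exists in a useful form: DFS-explored sets in $Q^d_{p_1}$ at $p_1=\Theta(1/d)$ are not well understood, and a path occupying a $\Theta(1)$ fraction of $Q^d$ genuinely can have $\Theta(d\cdot 2^d)$ boundary edges.

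The second step is also not routine in this regime. P\'osa rotations at $p_2=\Theta(\eps/d)$ require the endpoint set to expand in $Q^d_{p_2}$ (or in the sprinkled graph), and at $p=(1+\eps)/d$ the giant component of $Q^d_p$ has constant average degree and poor uniform expansion; the usual booster argument needs expansion of sets of size up to $\Theta(2^d)$, which is not available. In $G(n,(1+\eps)/n)$ the long-cycle result is proved via the structure of the $2$-core/kernel rather than na\"ive rotation-extension, and the analogous structure theory for $Q^d_p$ is not established. In short: the decomposition and intuition are reasonable and correctly locate the obstruction, but the crucial amortised isoperimetric input is missing, so this does not constitute a proof of the conjecture.
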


Concerning the guaranteed length of the cycle provided in Theorem~\ref{thm:main}, some indications suggest that the bound can be strengthened to $(1-\e^{-\Omega(pd)})\, 2^d$ --- indeed, an analogous bound holds for the length of the longest cycle in the Erd\H{o}s-R\'enyi graph~(\cite{AKS81}, see also \cite{F86} for an accurate estimate). 
Such a lower bound would be optimal since the typical number of isolated vertices is about $\e^{-pd}\, 2^d$.
\begin{conjecture}\label{conj:large}
There are constants $c_0, C_0 > 0$ such that, for $p = p(d)\in [0,1]$ satisfying $pd\ge C_0$, \textbf{whp}  $Q^d_p$ contains a cycle of length at least $(1-\e^{-c_0 pd})\, 2^d$.
\end{conjecture}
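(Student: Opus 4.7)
The target bound $(1-\e^{-c_0 pd})\, 2^d$ heuristically matches the typical count of vertices trapped in tiny components, since an individual vertex is isolated in $Q^d_p$ with probability $(1-p)^d = \e^{-(1+o(1))pd}$. Thus my plan is to show that, up to an $\e^{-c_0 pd}$ proportion of exceptions, every vertex of $Q^d_p$ lying in a non-negligible component can be placed on a single cycle. I would pursue this via a three-stage sprinkling argument built on top of \Cref{thm:main}.

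First, decompose $p = p_0 + p_1$ with $p_0 = C(\eps_0)/d$ for some fixed small $\eps_0$ and $p_1 \geq p/2$, and apply \Cref{thm:main} to $Q^d_{p_0}$ to obtain a cycle $\cC$ of length at least $(1-\eps_0)\, 2^d$. Let $B := \{0,1\}^d \setminus V(\cC)$, so that $|B| \leq \eps_0\, 2^d$. Second, control the small components of $Q^d_p$: via a Galton--Watson / BFS-exploration analysis on the hypercube, in the spirit of \cite{AKS81,BKL92,K23}, show that \whp{} the total number of vertices lying in a component of $Q^d_p$ of size at most $d^{100}$ is at most $\e^{-c_1 pd}\, 2^d$ for some constant $c_1>0$. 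The required exponent comes from Chernoff-type bounds: having fewer than $pd/2$ surviving incident edges at a given vertex already occurs with probability $\e^{-\Theta(pd)}$, and conditioning on a vertex lying in a small component amplifies this through a standard exploration argument.

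Third and most substantively, absorb the remaining bad vertices $B' \subseteq B$ (those in large components of $Q^d_p$) using the fresh edges of $Q^d_{p_1}$. For each $u \in B'$, open $\cC$ at some edge to produce a long path $P$, apply P\'osa-style rotations along $P$ using the fresh randomness so that the set of reachable path endpoints expands rapidly, and once an endpoint reaches a hypercube neighbor of $u$, extend by attaching $u$ and close back into a cycle by a symmetric argument at the opposite endpoint. If each rotation round is fed by an independent slice of $p_1$, the probability that the procedure fails for a single $u$ can be driven down to $\e^{-c_0 pd}$. A union bound over the at most $\eps_0\, 2^d$ choices of $u \in B'$ would then keep the total number of unabsorbed vertices below $\e^{-c_0 pd}\, 2^d$, as required.

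The principal obstacle is achieving the exponentially-small-in-$pd$ failure probability per vertex in the rotation-extension step. In the Erd\H{o}s--R\'enyi setting, rotation arguments exploit the strong vertex expansion of sparse random graphs, but in $Q^d_p$ each vertex has only $d$ neighbors, so expansion of the rotation endpoints must combine the isoperimetric structure of $Q^d$ with Chernoff concentration on the random edges, while remaining stable under the modifications to $\cC$ made during earlier absorptions. A cleaner route might be to build an absorbing structure directly into $\cC$ during the first stage --- reserving a positive fraction of the length of $\cC$ as dedicated absorbers for arbitrary future vertices --- at the cost of a slightly reworked version of \Cref{thm:main}. It is also plausible that the mixed-percolation formulation highlighted in the remark following \Cref{thm:main} plays a role, since it would allow one to treat the absorbed vertices uniformly across the regime $pd \geq C_0$.
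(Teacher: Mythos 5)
This statement is \Cref{conj:large}, which the paper explicitly poses as an \emph{open conjecture}; there is no proof in the paper to compare your attempt against. The authors prove only \Cref{thm:main}, which gives $(1-\eps)\,2^d$ for fixed $\eps>0$, and they state the $\bigl(1-\e^{-c_0 pd}\bigr)2^d$ strengthening as a problem left open.

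As for the sketch itself: the genuine gap is exactly the one you flag in your final paragraph, and it is not cosmetic. P\'osa-type rotation-extension arguments rely on rapid growth of the set of reachable endpoints under rotations, and in $G(n,p)$ with average degree $\Theta(\log n)$ this growth is fuelled by the fact that the underlying host graph $K_n$ has degree $n-1 \gg \log n$. In $Q^d_p$ the host graph has degree only $d$, so a single rotation round can enlarge the endpoint set by a factor of at most $d$, and the random edges feeding those rotations have density only $p=\Theta(1/d)$; moreover $Q^d$ is bipartite, so rotations along a fixed path preserve the parity class of the endpoint, which rules out roughly half the potential attachment points for $u$ and interacts badly with the layered structure. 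Achieving a \emph{per-vertex} failure probability of $\e^{-c_0 pd}$ (a constant when $pd=C_0$) while absorbing up to $\eps_0\,2^d$ vertices sequentially would also require controlling the dependencies created by the modifications to $\cC$ at each step; your first-moment argument over $u\in B'$ does not address this, and the ``independent slice of $p_1$ per rotation round'' device cannot be iterated $\Theta(2^d)$ times without exhausting $p_1$. The alternative you float --- pre-reserving a positive fraction of $\cC$ as absorbers --- loses a constant fraction of the length up front, which is precisely what the conjecture is trying to beat, so without a much more delicate construction it is circular. In short, your stages 1 and 2 are plausible and align with known component-size estimates in the spirit of the cited works, but stage 3 is the entire difficulty and remains open.
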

Finally, our proof utilises in a key way the `layered' structure of the hypercube. In the context of recent work on percolation on other high-dimensional graphs \cite{CDE24,DEKK24,DEKK22,L22}, it would be interesting to determine if there is a quantitative similarity in the length of the longest cycle after percolation in other high-dimensional graphs without an explicit `layered' structure. As a concrete example, it is not clear if our methods extend to the \emph{middle layer graph} $H[d/2]$, or more generally, other (bipartite) Kneser graphs.
\begin{question}
Fix $p=p(d)=C/d$ for sufficiently large $C>2$. Does $H[d/2]_p$ contain a cycle of length $(1-o_C(1))\, |V(H[d/2])|$ \textbf{whp} ?
\end{question}
We note that, until relatively recently, even the existence of an approximately spanning cycle in $H[d/2]$ was an open problem~\cite{J04}. The existence of a Hamiltonian cycle in $H[d/2]$ was a longstanding conjecture finally settled by M{\"u}tze~\cite{M16,M24}.

The rest of the paper is structured as follows. In Section~\ref{sec: notation}, we lay out notation that will be used throughout the paper. Then, in Section~\ref{sec:outline}, we give an outline of the proof of Theorem~\ref{thm:main}. In Section~\ref{sec:prel}, we prove some auxiliary lemmas that will be useful throughout the paper. 
The proof of Theorem~\ref{thm:main} is spread throughout Sections \ref{sec:part 1a}--\ref{sec: part 3}.

\subsection{Notation}\label{sec: notation}
For real numbers $a,b,c$ with $c > 0$, we write $a=b\pm c$ to mean that $a\in [b-c, b+c]$. For positive integers $k$, we write $[k] = \{1,\ldots,k\}$.
Throughout the paper, all logarithms are taken with respect to the natural base. Any asymptotic notation is used with respect to the parameter $d$. 
We further write $f(C,x) = o_C(x)$ if $\lim_{C\to\infty}f(C,x)/x = 0$, and $f(C,x) = \omega_C(x)$ if $\lim_{C\to\infty}f(C,x)/x =\infty$. With a slight abuse of notation, we sometimes write $a\gg b$ to indicate that $a=\omega(b)$, and $a\ll b$ to indicate that $a=o(b)$. 
Rounding is ignored where irrelevant for better readability.

We introduce some notation specific to high-dimensional hypercubes.  
For simplicity, we write throughout $V = V(Q^d)$.
For a vertex $v\in V$, we write $\ind(v)$ for the set of coordinates of $v$ equal to 1 and call it the \emph{support} of $v$. Given $S\subseteq V$, we let $\ind(S)$ be the \emph{joint support} of $S$, that is, $\ind(S)=\bigcap_{v\in S}\ind(v)$, and $\mathbb{T}(S)$ be that \emph{total support} of $S$, that is, $\mathbb{T}(S) = \bigcup_{v \in S} \ind(v)$.

Moreover, for an integer $i\in [0,d]$, we denote by $L_i\subseteq V$ the set of vertices with exactly $i$ coordinates equal to 1, that is, $L_i\coloneqq \{v\in V\colon |\ind(v)|=i\}$. 
We often call this set \emph{layer $L_i$} or, with a slight abuse of notation, layer $i$ of the (hyper)cube.
For integers $i_1,i_2\in [0,d]$ with $i_1 < i_2$, we also write $L_{i_1,i_2}\coloneqq L_{i_1}\cup L_{i_1+1}\cup \ldots \cup L_{i_2}$.
Further, for subsets $U\subseteq V$, we write $L_i[U]\coloneqq L_i\cap U$ and $L_{i_1,i_2}[U]\coloneqq L_{i_1,i_2}\cap U$. A path in $Q^d$ is called \textit{monotone} if it visits every layer of the hypercube at most once.
Given two vertices $u,v\in V$ such that $\ind(u)\subseteq \ind(v)$, let $Q[u;v]$ be the induced subcube of $Q^d$ whose vertex set is given by $V(Q[u;v])\coloneqq \left\{w\in V\colon \ind(u)\subseteq \ind(w)\subseteq \ind(v) \right\}$. Note that $Q[u;v]$ is isomorphic to a hypercube of dimension $|\ind(v)|-|\ind(u)|$. 

Given $p,q\in [0,1]$, we denote by $Q^d_p$ the random subgraph obtained by retaining every edge of $Q^d$ independently and with probability $p$, and we further write $Q^d_p(q)$ for the random (vertex) subgraph of $Q^d_p$ obtained by retaining every vertex $v\in V$ independently and with probability $q$.
Finally, we denote by $Q_0$ (resp. $Q_1$) the subcube of $Q^d$ of dimension $d-1$ obtained by fixing the first coordinate to be $0$ (resp. $1$). 

If $\cP$ is a family of vertex-disjoint subgraphs of $Q^d$, then we write $V(\cP) = \bigcup_{P \in \cP} V(P)$.

\subsection{Proof outline}\label{sec:outline}

In this section, we present an overview of the proof of Theorem~\ref{thm:main}. For ease of presentation, we will describe the strategy \emph{deterministically}, although in reality some of the statements will hold instead with (sufficiently) high probability.

We will start by partitioning $V(Q^d)$ into a `main' set $V_1$, containing all but a vanishing in $C$ proportion of the vertices of $Q^d$, and some small random `reservoirs' $V_2,V_3$, which are sufficiently `well-distributed' in $V(Q^d)$, and within each layer $L_i$, to preserve the `typical' degrees of most vertices in each partition class to adjacent layers.

On a high-level, our strategy will be to first cover almost all of $V_1$ with a large family of paths, where by `almost all', both here and in the rest of this section, we mean all but a vanishing in $C$ proportion. We then use the vertices in the reservoirs to `merge' these paths into smaller and smaller families of longer and longer paths, still covering almost all of $V_1$, until they are eventually merged into a single, nearly spanning cycle.

After the initial step, in which we build a family of paths covering almost all the vertices of $V_1$, there are four main merging steps. Since the merging process will rely heavily on the `layered' structure of $Q^d$, it will be useful to keep track of the vertices in or between certain layers. Later on, we will fix layers $m_1 \leq m_2 \leq m_3 \leq d/2 \leq m_4$ such that almost all vertices in $Q^d$ lie in $L_{m_3,m_4}$ and $m_1 = \Theta(\log d) \ll m_2 \ll m_3$.

The main steps are then as follows.

\begin{enumerate}[\arabic*{}.]
    \item We first construct a family $\cP_1$ of paths of length $\omega_C(1)$ covering almost all vertices in $L_{m_3,m_4}[V_1]$ (Section~\ref{sec:part 1a}).
    \item We then use the vertices in $L_{m_3,m_4}[V_2]$ together with a novel variant of the Depth-First-Search algorithm on an auxiliary unbalanced bipartite graph in order to merge most of the total volume of these paths into a family $\cP_2$ of paths of length $\omega_C(d)$ (Section~\ref{sec: part 1b}).
    \item We then introduce a process, which we call Merge-Or-Grow (MOG), in which we iteratively grow expanding trees down from the initial and terminal segments of the paths in $\cP_2$ one layer at a time. When the trees corresponding to different paths meet, we will use them to merge two paths. We first run MOG using the vertices in $L_{m_2,m_4}[V_3]$ and show that any tree which does not merge during this process grows to have \emph{many} leaves in $L_{m_2}[Q_0]$ (Section \ref{sec:2(a)}). 
    \item\label{i:step4} We then run MOG using the vertices of $L_{m_1,m_2}[Q_0]$, and analyse this process using an innovative coupling with a simpler process on a mixed percolated hypercube. This allows us to construct a family $\cP_3$ of at most six paths, each of whose initial and terminal segments are connected by a tree to \emph{many} vertices in $L_{m_1}[Q_0]$ (\Cref{sec:2(b)}).
     \item\label{i:step5} Finally, we use some self-similarity properties of $Q^d$ to find \emph{many} disjoint subcubes of dimension $\Theta(d)$ in $L_{0,m_2}[Q_1]$, whose edges we did not reveal yet, which can be used to merge the paths in $\cP_3$. Since $pd \gg 1$, it is very likely that these merges can be performed inside the giant component of one of these subcubes (\Cref{sec: part 3}).
\end{enumerate}

We note that, for Steps \ref{i:step4} and \ref{i:step5}, it will be important that we have some control over the supports of the leaves in the trees constructed via MOG (see, in particular, Proposition \ref{prop:2(a)}).

\section{Preliminaries}\label{sec:prel}
We collect several lemmas used throughout the paper. The first one is a simplified version of the Kruskal-Katona theorem due to Lov\'asz~\cite{L93}.
\begin{thm}[Lov\'asz' version of the Kruskal-Katona theorem]\label{thm:LKK}
Fix $i\in [d]$, a set $A\subseteq L_i$ and a real number $x$ such that $|A| = \prod_{j=0}^{i-1} \tfrac{x-j}{i-j} =: \tbinom{x}{i}$. Then, $A$ has at least $\tbinom{x}{i-1}$ neighbours in $L_{i-1}$ within $Q^d$.
\end{thm}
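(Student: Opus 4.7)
My plan is to establish \Cref{thm:LKK} by the standard compression technique, reducing to a shifted extremal configuration whose shadow can be estimated analytically.

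First I would introduce, for each pair $1 \le j < k \le d$, the shift operator $\sigma_{jk}$ that for every $S \in A$ with $k \in S$ and $j \notin S$ replaces $S$ by $(S \setminus \{k\}) \cup \{j\}$ whenever this set is not already in $A$. It is classical that $|\sigma_{jk}(A)| = |A|$ and $|\partial \sigma_{jk}(A)| \le |\partial A|$, where $\partial$ denotes the lower shadow in $L_{i-1}$. After finitely many applications these operators stabilise, so I may assume that $A$ is \emph{shifted}: for every $S \in A$, every $k \in S$ and every $j < k$ with $j \notin S$, the set $(S \setminus \{k\}) \cup \{j\}$ also lies in $A$. This reduction is what allows one to pin down the shadow up to an essentially one-parameter family.

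Next I would proceed by induction on $i$. The base case $i = 1$ is immediate, since then $\partial A = \{\varnothing\}$ whenever $A \ne \varnothing$ and $\tbinom{x}{0} = 1$. For the inductive step, I would split $A$ into $A_1 = \{S \in A : 1 \in S\}$ and $A_0 = A \setminus A_1$, and set $A_1' = \{S \setminus \{1\} : S \in A_1\}$, viewed as a family of $(i-1)$-subsets of $\{2, \ldots, d\}$. The shifted property forces the lower shadow of $A_0$ (inside $\{2, \ldots, d\}$) to be contained in $A_1'$, while $A_1'$ itself is contained in $\partial A$. Moreover, each element of the lower shadow of $A_1'$ gives, after prepending $1$, a distinct element of $\partial A$ containing $1$. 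Putting these contributions together yields
\[
    |\partial A| \ge |A_1'| + |\partial A_1'|,
\]
and the inductive hypothesis applied to $A_1'$ bounds $|\partial A_1'|$ from below in terms of $|A_1'|$.

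The main obstacle is the final analytic step: writing $|A_1'| = \tbinom{y}{i-1}$ for a suitable real $y \le x$ and verifying that the resulting bound $\tbinom{y}{i-1} + \tbinom{y}{i-2}$ is at least $\tbinom{x}{i-1}$ under the constraint $|A_0| + \tbinom{y}{i-1} = \tbinom{x}{i}$. This is a purely one-variable inequality about the function $x \mapsto \tbinom{x}{i} = \prod_{j=0}^{i-1} (x-j)/(i-j)$, and handling it requires care precisely in the regime where $x$ is not an integer, which is what distinguishes Lovász's smoothed statement from the original Kruskal--Katona cascade. I would close the argument by exploiting the log-concavity (or a direct monotonicity check of the ratio $\tbinom{x}{i-1}/\tbinom{x}{i}$) of this function, optimising over the choice of $y$ so that equality is attained exactly on the initial segments in colex order.
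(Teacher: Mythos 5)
The paper offers no proof of \Cref{thm:LKK}; it is stated as a known result, cited to Lov\'asz's textbook \cite{L93}, so there is no in-paper argument to compare yours against. Judged on its own terms, your proposal follows the standard compression route, and most of the combinatorial scaffolding is correct: the shift operators, the reduction to a shifted family $A$, the split $A = A_0 \cup A_1$, the inclusion $\partial A_0 \subseteq A_1'$ (with $\partial$ denoting the lower shadow), the observation $A_1' \subseteq \partial A$, and the inequality $|\partial A| \ge |A_1'| + |\partial A_1'|$ are all correctly stated.

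The gap is in the closing step, and it is more than an analytic nuisance. You write $|A_1'| = \tbinom{y}{i-1}$ and propose to verify $\tbinom{y}{i-1} + \tbinom{y}{i-2} \ge \tbinom{x}{i-1}$ ``under the constraint $|A_0| + \tbinom{y}{i-1} = \tbinom{x}{i}$'', calling this a one-variable inequality. It is not: $|A_0|$ and $y$ are tied by a single equation, so $y$ can be taken very small (and $|A_0|$ correspondingly large) while respecting the constraint, and then $\tbinom{y}{i-1} + \tbinom{y}{i-2}$ falls far short of $\tbinom{x}{i-1}$. To close the argument you must exploit $\partial A_0 \subseteq A_1'$ quantitatively: apply the shadow bound to the family $A_0$ of $i$-sets on the ground set $\{2,\dots,d\}$ to get $\tbinom{y}{i-1} = |A_1'| \ge |\partial A_0| \ge \tbinom{x_0}{i-1}$ where $|A_0| = \tbinom{x_0}{i}$; monotonicity gives $y \ge x_0$, hence $\tbinom{y+1}{i} = \tbinom{y}{i} + \tbinom{y}{i-1} \ge \tbinom{x_0}{i} + \tbinom{y}{i-1} = \tbinom{x}{i}$, so $y \ge x-1$, and then $|\partial A| \ge \tbinom{y}{i-1} + \tbinom{y}{i-2} = \tbinom{y+1}{i-1} \ge \tbinom{x}{i-1}$. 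But that extra inductive call is applied to an $i$-uniform family $A_0$, not an $(i-1)$-uniform one, so your declared induction on $i$ does not license it. The fix is to induct on the ground-set size $d$ (both $A_0$ and $A_1'$ live in $\{2,\dots,d\}$), or on $|A|$, or to run a double induction; as written, the argument does not close.
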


The second one is a technical lemma used together with Theorem~\ref{thm:LKK}.

\begin{lemma}\label{l:binomtech}
Consider $x \in \mathbb{R}$, $i \in [d]$ and $\alpha \in \mathbb{N}$ such that $i = \Theta(d)$ and $\binom{x}{i} = \Theta\left(d^\alpha\right)$. 
Then, $x \leq i + \frac{3}{2}\alpha$ and, in particular,
\[
\frac{i}{3\alpha} \binom{x}{i}\le \binom{x}{i-1}.
\]
\end{lemma}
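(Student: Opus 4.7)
The plan is to first establish the upper bound $x \leq i + \tfrac{3}{2}\alpha$, and then deduce the ratio inequality from the elementary identity
$$\frac{\binom{x}{i-1}}{\binom{x}{i}} = \frac{i}{x-i+1},$$
which follows by direct cancellation in the defining products of $\binom{x}{i}$ and $\binom{x}{i-1}$.

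For the upper bound on $x$, first note that $\binom{x}{i} = \Theta(d^\alpha)$ with $\alpha \geq 1$ forces $\binom{x}{i}$ to be large and positive, hence $x \geq i$. Since the polynomial $\binom{\cdot}{i} = \prod_{j=0}^{i-1}(\cdot-j)/(i-j)$ is monotone increasing on $[i-1,\infty)$, it will suffice to show that $\binom{i+\tfrac{3}{2}\alpha}{i}$ is already too large to be $O(d^\alpha)$. For this I will expand
$$\log \binom{i+\tfrac{3}{2}\alpha}{i} = \sum_{k=1}^{i}\log\!\left(1+\frac{3\alpha}{2k}\right) \geq \tfrac{3\alpha}{2}\log i - O(\alpha^{2}),$$
using $\log(1+t) \geq t - t^{2}/2$ for $t \geq 0$ together with $\sum_{k=1}^{i}1/k = \log i + \Theta(1)$ and $\sum_{k=1}^{i}1/k^{2} = O(1)$. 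Since $i = \Theta(d)$, this gives $\binom{i+\tfrac{3}{2}\alpha}{i} = \Omega(d^{3\alpha/2})$, which is $\omega(d^\alpha)$ for $\alpha \geq 1$, contradicting $\binom{x}{i} = O(d^\alpha)$. Hence $x \leq i + \tfrac{3}{2}\alpha$.

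For the ratio bound, I will substitute the inequality $x - i + 1 \leq \tfrac{3}{2}\alpha + 1 \leq 3\alpha$ (the latter step using $\alpha \geq 1$) into the identity above, which immediately yields $\binom{x}{i-1} \geq \frac{i}{3\alpha}\binom{x}{i}$. The only mildly delicate point is the growth estimate in the first step; the constants $\tfrac{3}{2}$ and $3$ appearing in the statement are in fact tuned precisely so that $d^{3\alpha/2}$ strictly dominates $d^\alpha$ (requiring $\alpha \geq 1$) while $\tfrac{3}{2}\alpha + 1 \leq 3\alpha$ still holds, matching the denominator of the desired ratio.
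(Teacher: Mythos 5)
Your proof is correct and follows essentially the same strategy as the paper: show that $\binom{y}{i}$ already exceeds $\Theta(d^\alpha)$ at $y = i+\tfrac{3}{2}\alpha$ by expanding the product as a sum of logarithms, then conclude $x \le i + \tfrac{3}{2}\alpha$ by monotonicity, and finally plug $x-i+1 \le 3\alpha$ into the identity $\binom{x}{i-1}/\binom{x}{i} = i/(x-i+1)$. The one cosmetic difference is in the intermediate estimate: the paper bounds $1+z \ge \e^{(1+o(1))z}$ and has to truncate the sum to indices $j \le i - \log i$ so that $z = \tfrac{3\alpha/2}{i-j}$ is $o(1)$, whereas your use of $\log(1+t) \ge t - t^2/2$ lets you sum over all $k$ and absorb the error into an $O(\alpha^2)$ term via $\sum_k 1/k^2 = O(1)$, which is a bit cleaner. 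Both versions implicitly rely on $\alpha$ being a fixed constant (or at worst $o(\log d)$) so the error term does not swamp the main $\tfrac{3\alpha}{2}\log d$ contribution; this is fine in the paper's applications.
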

\begin{proof}
Note that, for $x \geq i$, $\binom{x}{i}$ is increasing as a function of $x$, and if $y \geq i + \frac{3}{2}\alpha$, then
\begin{align*}
\binom{y}{i} 
&= \prod_{j=0}^{i-1} \frac{y-j}{i-j} 
\geq \prod_{j=0}^{i-1} \bigg(1 + \frac{3\alpha/2}{i-j}\bigg) \geq \exp\bigg( \left(\frac{3}{2}+o(1)\right)  \alpha \sum_{j=0}^{i-\log i}\frac{1}{i-j}\bigg)\\ 
&\geq \exp\left( \left(\frac{3}{2}+o(1)\right)\alpha \log i \right) = \Omega\left(d^{(3/2+o(1))\alpha}\right),
\end{align*}
using the fact that $1+z \geq \e^{(1+o(1))z}$ for $z = o(1)$ and that $\sum_{j=0}^{i-\log i}\frac{1}{i-j} = (1+o(1)) \log i$. 
In particular, it follows that $x \leq i + \frac{3}{2}\alpha$ and, hence,
\[
\frac{i}{3\alpha}\binom{x}{i} = \frac{x-i+1}{3\alpha}\binom{x}{i-1}\le \binom{x}{i-1}.\qedhere
\]
\end{proof}

The following lemma allows one to find monotone paths in hypercubes after mixed percolation. It generalises \cite[Lemma 4.1]{ADEK24} (which addresses edge-percolation) and is proved similarly. We include a full proof here for the sake of completeness.
\begin{lemma}\label{lem: monotone paths}
Fix $\alpha > \e$, $q\in (\e/\alpha, 1]$, let $D\in \mathbb{N}$ be sufficiently large, and let $\rho = \alpha/D$.
Then, the probability that $Q^D_\rho(q)$ contains a monotone path of length $D$ is at least $D^{-5}$. 
\end{lemma}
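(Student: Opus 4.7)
The plan is to apply the second moment method. Let $X$ count the monotone paths of length $D$ present in $Q^D_\rho(q)$. Each such path runs from $\bz$ to $\bo$ and corresponds bijectively to a permutation of $[D]$ (the order in which coordinates are flipped), so $Q^D$ contains exactly $D!$ monotone paths, and each is fully present in $Q^D_\rho(q)$ with probability $\rho^D q^{D+1}$. By Stirling's formula,
\[
\mathbb{E}[X] = D!\,\rho^D q^{D+1} = (1+o(1))\sqrt{2\pi D}\,q\,(\alpha q/\e)^D,
\]
which grows exponentially in $D$ since $\alpha q > \e$. It remains to control the second moment.

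Two monotone paths, viewed as permutations $\pi_1, \pi_2$, share the layer-$i$ vertex exactly when $\pi_1([i]) = \pi_2([i])$, and share the edge between layers $i-1$ and $i$ exactly when additionally $\pi_1(i) = \pi_2(i)$. Fixing $\pi_1$ as the identity by symmetry,
\[
\frac{\mathbb{E}[X^2]}{\mathbb{E}[X]^2} = \frac{1}{D!} \sum_{\pi} \rho^{-e(\pi)}\, q^{-v(\pi)},
\]
where $v(\pi), e(\pi)$ are the shared vertex/edge counts with the identity path. I will parametrize $\pi$ by the composition of $D$ corresponding to its match layers $\{i\in[0,D]: \pi([i])=[i]\} = \{0 = j_0 < \ldots < j_s = D\}$, with block sizes $b_t = j_t - j_{t-1}$. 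The number of permutations with a given composition equals $\prod_t f(b_t)$, where $f(b) \leq b!$ is the number of indecomposable permutations of $[b]$, and then $v(\pi) = s+1$, $e(\pi) = |\{t : b_t = 1\}|$. Decomposing by the first block yields the recursion
\[
a_D = q^{-1}\sum_{j=1}^D f(j)\, \rho^{-\ind[j=1]}\, a_{D-j}, \qquad a_0 = q^{-1},
\]
for $a_D := \sum_\pi \rho^{-e(\pi)} q^{-v(\pi)}$.

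Setting $r_D := a_D/D!$, I claim $r_D$ is bounded by a constant depending only on $\alpha$ and $q$. The dominant terms in the recursion are the $j = 1$ contribution $r_{D-1}/(\alpha q)$ (since $\rho^{-1}/D = 1/\alpha$) and the boundary terms $j$ close to $D$, which form a geometric series summing to $O(1/(\alpha q - 1))$ (using $p_k := f(k)/k! \leq 1$ and $r_k \sim (D/\alpha)^k/(k!\, q^{k+1})$ for small $k$); the middle terms are negligible because $\sum_{j=2}^{D-2} 1/\binom{D}{j} = O(1/D^2)$. Solving the resulting fixed-point equation gives $r_D \to \alpha^2/(q(\alpha q-1)^2) = O(1)$, and then Paley--Zygmund yields $\mathbb{P}[X \geq 1] \geq 1/r_D = \Omega(1)$, which is much stronger than the claimed lower bound $D^{-5}$.

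The main technical hurdle is controlling the "boundary" terms in the second-moment sum: permutations sharing a long initial segment with the identity pick up enormous factors $\rho^{-k}$ (potentially as large as $(D/\alpha)^D$). These are compensated by the fact that only $f(D-k) \leq (D-k)!$ of the $D!$ permutations have this structure, yielding a small factor $\sim 1/\binom{D}{k}$ after normalization. The recursion precisely tracks this trade-off, and its boundedness hinges only on $\alpha q > 1$, a condition much weaker than the hypothesis $\alpha q > \e$ needed for the first-moment calculation.
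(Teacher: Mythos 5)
Your second-moment setup is correct and takes a genuinely different route from the paper: rather than the paper's crude parametrisation of the overlap of two monotone paths by the number $k$ of shared edges and the number $\ell$ of deviation segments (followed by explicit estimation), you decompose $\pi$ into indecomposable blocks and package the whole computation into a clean convolution recursion for $r_D = \mathbb{E}[X^2]/\mathbb{E}[X]^2$. That is a more refined formulation, and it aims at the sharper conclusion $\mathbb{P}(X\ge 1)=\Omega(1)$ rather than the paper's $D^{-5}$.

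However, the analysis of the recursion has a genuine gap, and your claimed threshold $\alpha q>1$ cannot be correct. The single diagonal term $\pi=\mathrm{id}$ already contributes $q^{-(D+1)}\rho^{-D}/D! = 1/\mathbb{E}[X] = \Theta\bigl(q^{-1}D^{-1/2}(\e/(\alpha q))^D\bigr)$ to $r_D$, which diverges for every $1<\alpha q<\e$; so boundedness of $r_D$ --- and with it the whole second-moment method --- genuinely requires $\alpha q>\e$. Concretely, the factor $1/\binom{D}{k}$ does \emph{not} compensate $\rho^{-k}$ uniformly: $\rho^{-k}(D-k)!/D! = \prod_{i<k}D/(\alpha(D-i))$ grows like $(\e/\alpha)^D$ at $k=D$, which is only small because $\alpha>\e$. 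Relatedly, the fixed-point heuristic $r_D\approx r_{D-1}/(\alpha q)+O(1)$ is not a valid induction: the $j=1$ coefficient in the recursion for $r_k$ is $D/(k\alpha q)$ (not $1/(\alpha q)$), which exceeds $1$ for all $k<D/(\alpha q)$, so the sequence $(r_k)$ is not contracting over that range; and your dismissal of the "middle terms" via $\sum_j 1/\binom{D}{j}=O(1/D^2)$ leaves $r_{D-j}$ uncontrolled precisely where it can be polynomially or exponentially large. To make the argument rigorous you would need a two-regime induction hypothesis (a quantitative bound for small $k$ matching your $(D/(\alpha q))^k/(k!\,q)$ asymptotic, and a uniform bound for large $k$), invoking $\alpha q>\e$ at the boundary --- or fall back to the paper's coarser $(k,\ell)$ bookkeeping, which accepts a polynomial loss and lands at the weaker but sufficient $D^{-5}$.
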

\begin{proof}
Let $X$ be the random variable counting the number of monotone paths of length $D$ in $Q^D_p(q)$. 
An application of the Cauchy-Schwarz inequality gives
\begin{equation}\label{e:CS}
    \mathbb{P}(X\ge 1) \mathbb{E}[X^2] =\mathbb{E}[\mathbf{1}_{X\ge 1}] \mathbb{E}[X^2]\ge \mathbb{E}[(\mathbf{1}_{X\ge 1}X)^2] = \mathbb{E}[X]^2.
\end{equation}
Since $\mathbb{E}[X]=D! \rho^D q^{D+1}$, where we account for the $D+1$ vertices and the $D$ edges on a monotone path, it remains to bound $\mathbb{E}[X^2]$ from above. 

Denote by $\Pi$ the set of all monotone paths of length $D$ in $Q^D$. 
Define $\textnormal{id} = \{v_0v_1,\ldots,v_{D-1}v_D\}\in \Pi$, where $v_i\in L_i$ is the vertex whose first $i$ coordinates are 1 (and the last $D-i$ coordinates are 0). 
Then,
\begin{align*}
    \mathbb{E}[X^2]
    &=\sum_{\pi_1,\pi_2\in \Pi}\mathbb{P}(\{\pi_1\subseteq Q^D_\rho(q)\}\wedge \{\pi_2\subseteq Q^D_\rho(q)\})\nonumber\\
    &= \sum_{\pi_1,\pi_2\in \Pi}q^{|V(\pi_1\cup\pi_2)|}\rho^{|E(\pi_1\cup \pi_2)|} = D! \sum_{\pi\in \Pi} q^{|V(\textnormal{id}\cup \pi)|}\rho^{|E(\textnormal{id}\cup \pi)|}.
\end{align*}
For a number $k\in [D]$, we restrict our attention to paths $\pi\in \Pi$ such that $\textnormal{id}\cap \pi$ contains $k$ edges, and let $\ell$ be the number of maximal segments of the path $\textnormal{id}$ which are edge-disjoint from $\pi$. Note that, given these quantities, we have that $|V(\textnormal{id}\cup \pi)|=2D+1-k-\ell$ and $|E(\textnormal{id}\cup \pi)|=2D-k$.

Fix $k,\ell$ as above. Then, there are at most $\binom{D}{2\ell}$ ways to choose where the maximal segments lie. 
Indeed, choosing the first and the last edge of the segments (which must be different) determines its position. 
For every $j\in [\ell]$, by writing $x_j$ for the number of edges in the $j$-th segment, we have that $x_1+\ldots+ x_\ell=D-k$.
Given where these maximal segments lie, there are at most $x_1!\cdot\ldots\cdot x_\ell!$ ways to recover $\pi$. Further, note that, for every $j\in[\ell]$, we have that $x_j\ge2$ and thus $\ell \le (D-k)/2$. Therefore,
\begin{align}
    \mathbb{E}[X^2]&\le D!\sum_{k=0}^{D}\sum_{\ell=0}^{(D-k)/2}\binom{D}{2\ell}q^{2D+1-k-\ell}\rho^{2D-k}\max_{\substack{x_1,\ldots,x_{\ell}\geq 2,\\ x_1+\ldots+x_{\ell}=D-k}} \left\{\prod_{j=1}^\ell x_j!\right\}\nonumber\\
    &\le D!q^{D+1}\rho^D+D!\sum_{k=0}^{D-2}\sum_{\ell=1}^{(D-k)/2}\binom{D}{2\ell}q^{2D-k-\ell}\rho^{2D-k}\max_{\substack{x_1,\ldots,x_{\ell}\geq 2,\\ x_1+\ldots+x_{\ell}=D-k}} \left\{\prod_{j=1}^\ell x_j!\right\},\label{eq: 1-2nd-moment}
\end{align}
where the first summand corresponds to the case where $k=D$ (and $\ell=0$). We now turn to estimate the second summand. By the convexity of the Gamma function, the maximum in \eqref{eq: 1-2nd-moment} is attained when $x_j=2$ for all $j\in [\ell]$ except one, which is equal to $D-k-2(\ell-1)$. Thus,
\begin{align}
    \sum_{k=0}^{D-2}\sum_{\ell=1}^{(D-k)/2}\binom{D}{2\ell}&q^{2D-k-\ell}\rho^{2D-k}\max_{\substack{x_1,\ldots,x_{\ell}\geq 2,\\ x_1+\ldots+x_{\ell}=D-k}} \left\{\prod_{j=1}^\ell x_j!\right\}\nonumber\\
    &\le (q\rho)^{2D}\sum_{k=0}^{D-2}\sum_{\ell=1}^{(D-k)/2}\left(\frac{\e D}{q\ell}\right)^{2\ell}\left(\frac{D}{q\alpha}\right)^{k}(D-k-2(\ell-1))!\,,\label{eq: 2-2nd-moment}
\end{align}
where we used the inequality $\tbinom{D}{2\ell}\le (\e D/2\ell)^{2\ell}$.
By changing the order of summation, and then by changing variables, we have
\begin{align}
   \sum_{k=0}^{D-2} \sum_{\ell=1}^{(D-k)/2}&\left(\frac{\e D}{q\ell}\right)^{2\ell}\left(\frac{D}{q\alpha}\right)^{k}(D-k-2(\ell-1))!\nonumber\le \sum_{\ell=1}^{D/2}\left(\frac{\e D}{q\ell}\right)^{2\ell}\sum_{k=0}^{D-2\ell}\left(\frac{D}{q\alpha}\right)^k(D-k-2\ell+2)!\nonumber
   \\&=\sum_{\ell=1}^{D/2}\left(\frac{\e D}{q\ell}\right)^{2\ell}\left(\frac{D}{q\alpha}\right)^{D-2\ell}\;\sum_{m=0}^{D-2\ell}\left(\frac{q\alpha}{D}\right)^m(m+2)!\,. \label{eq: 3-2nd-moment}
\end{align}
Moreover, by using the inequality $(m+2)!\le D^3(m/\e)^m$ for $m\in [0,D-2]$, the inner sum is bounded from above by
\begin{equation}\label{eq: 4-2nd-moment}
D^3\sum_{m=0}^{D-2\ell}\left(\frac{q\alpha m}{D \e}\right)^m \le D^4 \cdot \left(\frac{q\alpha}{\e}\right)^{D-2\ell}.
\end{equation}
Now, \eqref{eq: 3-2nd-moment} together with \eqref{eq: 4-2nd-moment} give
\begin{align}
\sum_{k=0}^{D-2} \sum_{\ell=1}^{(D-k)/2}&\left(\frac{\e D}{q\ell}\right)^{2\ell}\left(\frac{D}{q\alpha}\right)^{k}(D-k-2(\ell-1))!\nonumber\le\sum_{\ell=1}^{D/2}\left(\frac{\e D}{q\ell}\right)^{2\ell}\left(\frac{D}{q\alpha}\right)^{D-2\ell}D^4 \cdot \left(\frac{q\alpha}{\e}\right)^{D-2\ell}\nonumber\\
&=D^4\cdot\left(\frac{D}{\e}\right)^D\sum_{\ell=1}^{D/2}\left(\frac{\e^2}{q\ell}\right)^{2\ell}. \label{eq: 5-2nd-moment}
\end{align}
Hence, putting together \eqref{eq: 1-2nd-moment}, \eqref{eq: 2-2nd-moment}, and \eqref{eq: 5-2nd-moment}, we have
\begin{align*}
    \mathbb{E}[X^2]\le D!q^{D+1}\rho^D+D!(q\rho)^{2D}D^4\cdot\left(\frac{D}{\e}\right)^D\sum_{\ell=1}^{D/2}\left(\frac{\e^2}{q\ell}\right)^{2\ell}\le D^5(D!q^{D+1}\rho^D)^{2},
\end{align*}
where in the last inequality we used that $\sum_{\ell=1}^{D/2}\left(\frac{\e^2}{q\ell}\right)^{2\ell}$ is dominated by a geometric sum and hence $\left(\frac{D}{\e}\right)^D\sum_{\ell=1}^{D/2}\left(\frac{\e^2}{q\ell}\right)^{2\ell}\le D!\,$. Thus, \eqref{e:CS} implies that $\mathbb{P}\left(X\ge 1\right)\ge D^{-5}$, as desired.
\end{proof}

We will also need two lemmas which assert the existence of many disjoint subcubes covering certain collections of points in $Q^d$. We start with a simple observation.

\begin{observation}\label{o:subcubes}
Suppose that $u_1,u_2,v_1,v_2 \in Q^d$ are such that $\ind(u_1) \subseteq \ind(u_2)$, $\ind(v_1) \subseteq \ind(v_2)$ and $\ind(u_1) \setminus \ind(v_2) \neq \varnothing$. Then, $Q[u_1;u_2]$ and $Q[v_1;v_2]$ are vertex-disjoint.
\end{observation}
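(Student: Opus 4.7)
The plan is to proceed by contradiction. Suppose, towards a contradiction, that some vertex $w \in V(Q^d)$ lies in $V(Q[u_1;u_2]) \cap V(Q[v_1;v_2])$. Unpacking the definition of the induced subcube $Q[\,\cdot\,;\,\cdot\,]$ from Section~\ref{sec: notation}, this membership simultaneously says that
\[
\ind(u_1) \subseteq \ind(w) \subseteq \ind(u_2) \qquad \text{and} \qquad \ind(v_1) \subseteq \ind(w) \subseteq \ind(v_2).
\]
Combining the lower bound from the first chain with the upper bound from the second, transitivity of set inclusion yields $\ind(u_1) \subseteq \ind(w) \subseteq \ind(v_2)$, and in particular $\ind(u_1) \subseteq \ind(v_2)$. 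This directly contradicts the hypothesis $\ind(u_1) \setminus \ind(v_2) \neq \varnothing$.

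There is no real obstacle here: once one recalls that $V(Q[a;b]) = \{w : \ind(a) \subseteq \ind(w) \subseteq \ind(b)\}$, the observation collapses to transitivity of $\subseteq$, and the hypothesis that some coordinate of $\ind(u_1)$ is missing from $\ind(v_2)$ is precisely the certificate ruling out any common vertex. Note that the conditions $\ind(u_1) \subseteq \ind(u_2)$ and $\ind(v_1) \subseteq \ind(v_2)$ are only used implicitly to guarantee that the two subcubes are non-empty (and thus well-defined); the disjointness itself is forced entirely by the single ``separating'' coordinate in $\ind(u_1) \setminus \ind(v_2)$.
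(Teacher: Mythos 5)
Your proof is correct, and since the paper states Observation~\ref{o:subcubes} without any accompanying proof, you have simply supplied the short argument the authors left implicit: a common vertex $w$ would satisfy $\ind(u_1) \subseteq \ind(w) \subseteq \ind(v_2)$, contradicting $\ind(u_1) \setminus \ind(v_2) \neq \varnothing$. This is exactly the intended reasoning, and your side remark that the two containment hypotheses merely ensure the subcubes are non-trivial (rather than contributing to the disjointness) is an accurate reading.
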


We now turn to the two structural lemmas. 
\begin{lemma}\label{lem:firstsubcubes}
Fix $I \subseteq [d]$, $m_1\leq m_2 \leq d$, a set $S \subseteq L_{m_2}$ such that $I \subseteq \ind(S)$ and a set $Z \subseteq L_{m_1}$ such that $\mathbb{T}(Z) \subseteq I$. Then, there exists a set $\{ u(z,s) \colon s \in S, z \in Z\} \subseteq L_{m_1 + m_2 - |I|}$ such that
\begin{enumerate}[\textnormal{(\arabic*)}]
    \item\label{i:top} for each $z \in Z$, the subcubes $\{Q[u(z,s);s] \colon s \in S\}$ are pairwise disjoint;
    \item\label{i:bottom} for each $z \neq z'$ and each pair $s,s' \in S$ (not necessarily distinct), the two subcubes $Q[z;u(z,s)]$ and $Q[z';u(z',s')]$ are disjoint.
\end{enumerate}
\end{lemma}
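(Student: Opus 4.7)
The plan is to give an explicit construction of the vertices $u(z,s)$ and then verify the two properties using \Cref{o:subcubes}. The natural candidate, given the containment requirements $\ind(z)\subseteq \ind(u(z,s))\subseteq \ind(s)$, is
\[
\ind(u(z,s)) \;:=\; \ind(z) \,\cup\, \bigl(\ind(s)\setminus I\bigr).
\]
First I would check that this makes sense. Since $\ind(z)\subseteq \mathbb{T}(Z)\subseteq I$ and $\ind(s)\setminus I$ is disjoint from $I$, the two sets in the union are disjoint, so $|\ind(u(z,s))|=m_1+(m_2-|I|)$ as required (using $I\subseteq \ind(s)$). The containments $\ind(z)\subseteq \ind(u(z,s))\subseteq \ind(s)$ are then immediate from $\ind(z)\subseteq I\subseteq \ind(s)$, so both $Q[z;u(z,s)]$ and $Q[u(z,s);s]$ are well-defined subcubes.

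For property \ref{i:top}, fix $z\in Z$ and distinct $s,s'\in S$. Applying \Cref{o:subcubes} with $(u_1,u_2)=(u(z,s),s)$ and $(v_1,v_2)=(u(z,s'),s')$, it suffices to produce an element of $\ind(u(z,s))\setminus \ind(s')$. Since $\ind(z)\subseteq I\subseteq \ind(s')$, the set $\ind(z)$ contributes nothing, so we must find an element of $(\ind(s)\setminus I)\setminus \ind(s')=\ind(s)\setminus \ind(s')$, where the equality uses $I\subseteq \ind(s')$. This set is nonempty because $|\ind(s)|=|\ind(s')|=m_2$ and $s\neq s'$.

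For property \ref{i:bottom}, fix $z\neq z'$ in $Z$ and arbitrary $s,s'\in S$. Now apply \Cref{o:subcubes} with $(u_1,u_2)=(z,u(z,s))$ and $(v_1,v_2)=(z',u(z',s'))$. I need $\ind(z)\setminus \ind(u(z',s'))\neq \varnothing$. Since $\ind(z)\subseteq I$ and $\ind(s')\setminus I$ is disjoint from $I$, the intersection of $\ind(z)$ with $\ind(s')\setminus I$ is empty, so
\[
\ind(z)\setminus \ind(u(z',s')) \;=\; \ind(z)\setminus \bigl(\ind(z')\cup(\ind(s')\setminus I)\bigr) \;=\; \ind(z)\setminus \ind(z'),
\]
which is nonempty because $|\ind(z)|=|\ind(z')|=m_1$ and $z\neq z'$. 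This completes the verification.

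There is no genuine obstacle here: the only real decision is picking the right definition of $u(z,s)$, and the two disjointness checks reduce to a single application each of \Cref{o:subcubes}, where the set differences collapse cleanly thanks to $\mathbb{T}(Z)\subseteq I\subseteq \ind(S)$.
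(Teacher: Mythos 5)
Your construction $\ind(u(z,s)) = \ind(z)\cup(\ind(s)\setminus I)$ is the same vertex as the paper's $\ind(u(z,s)) = \ind(s)\setminus(I\setminus\ind(z))$ (the two expressions agree under $\ind(z)\subseteq I\subseteq\ind(s)$), and your verification of both disjointness properties via \Cref{o:subcubes} follows the paper's argument, merely phrased as a clean set-difference collapse rather than exhibiting a single witness $j$. Correct, and essentially the same proof.
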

\begin{proof}
For each $s \in S$ and $z\in Z$, we define $u(z,s)$ to be the vertex with
\begin{equation}\label{e:sz}
\ind(u(z,s)) =  \ind(s) \setminus ( I \setminus \ind(z)).
\end{equation}
In particular, $\ind(z) \subseteq \ind(u(z,s)) \subseteq \ind(s)$. It remains to check that each of the two points holds.

For the former, note that, for every $s \neq s' \in S$, there is some $j\in \ind(s) \setminus \ind(s')$. Hence, since $I \subseteq \ind(S)$, it follows that $j \not\in I$ and so, by \eqref{e:sz}, $j \in \ind(u(z,s))$. Hence, $j \in \ind(u(z,s)) \setminus \ind(s')$ and therefore, by \cref{o:subcubes}, $Q[u(z,s);s]$ and $Q[u(z,s');s']$ are disjoint.

For the latter, note that, for each $z \neq z'$, there is some $j \in \ind(z) \setminus \ind(z')$. Since $\ind(z) \subseteq I$, it follows that $j \in I \setminus \ind(z')$. 
In particular, by \eqref{e:sz}, we see that $j \not\in \ind(u(z',s'))$ and so $j \in \ind(z) \setminus \ind(u(z',s'))$. Hence, by \cref{o:subcubes}, $Q[z;u(z,s)]$ and $Q[z';u(z',s')]$ are disjoint.
\end{proof}

\begin{lemma}\label{lem:secondsubcube}
Fix $K \subseteq [2,d]$ of size $2d/3$ and, for each $k \in K$, let $u_k$ and $v_k$ be the vertices of $Q^d$ such that $\ind(u_k) = \{1,k\}$ and $\ind(v_k) = [d] \setminus (K \setminus \{k\})\supseteq \ind(u_k)$. Then, the subcubes $Q[u_k;v_k]$ are pairwise vertex-disjoint for different $k \in K$; subcubes of $Q_1$ of dimension $d/3-1$; and contained in $L_{1,d/3+1}$.
\end{lemma}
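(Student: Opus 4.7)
The plan is to verify each of the four claims directly from the definitions, using Observation~\ref{o:subcubes} for the disjointness.

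First, I will compute supports and dimensions. For each $k \in K$, the vertex $v_k$ satisfies $|\ind(v_k)| = d - |K \setminus \{k\}| = d - (2d/3 - 1) = d/3 + 1$, and we have $\{1,k\} \subseteq [d] \setminus (K \setminus \{k\})$ since $1 \notin K$ (as $K \subseteq [2,d]$) and $k \notin K \setminus \{k\}$; hence $\ind(u_k) \subseteq \ind(v_k)$, so $Q[u_k;v_k]$ is well-defined with dimension $|\ind(v_k)| - |\ind(u_k)| = (d/3+1) - 2 = d/3 - 1$.

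Next, since $1 \in \ind(u_k) \subseteq \ind(w)$ for every $w \in V(Q[u_k;v_k])$, each such subcube lies in $Q_1$. Moreover, every $w \in V(Q[u_k;v_k])$ satisfies $2 \leq |\ind(w)| \leq d/3+1$, so $V(Q[u_k;v_k]) \subseteq L_{2,d/3+1} \subseteq L_{1,d/3+1}$.

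Finally, I will handle disjointness. Given distinct $k, k' \in K$, I want to apply \cref{o:subcubes} to conclude that $Q[u_k;v_k]$ and $Q[u_{k'};v_{k'}]$ are vertex-disjoint. It suffices to exhibit an element of $\ind(u_k) \setminus \ind(v_{k'})$. But $k \in \ind(u_k)$ by definition, while $k \in K \setminus \{k'\}$ and therefore $k \notin [d] \setminus (K \setminus \{k'\}) = \ind(v_{k'})$. This witnesses the required nonempty difference, and \cref{o:subcubes} finishes the argument.

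No step is a real obstacle: the lemma is essentially a direct unpacking of the definitions, and the only minor choice is recognising that $\ind(u_k) \setminus \ind(v_{k'})$ (rather than the symmetric difference) is the right set to exhibit in order to invoke \cref{o:subcubes}.
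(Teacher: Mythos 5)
Your proof is correct and follows exactly the paper's approach: disjointness via Observation~\ref{o:subcubes} with the witness $k \in \ind(u_k) \setminus \ind(v_{k'})$, and the remaining properties by direct computation of supports and dimensions. You merely spell out the details that the paper declares ``straightforward.''
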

\begin{proof}
The first property follows from \cref{o:subcubes} since for $k \neq k'$ we have $k \in \ind(u_k) \setminus \ind(v_{k'})$. The second and the third properties are straightforward.
\end{proof}

\section{Covering the middle layers with short paths}\label{sec:part 1a}
For the rest of the paper, we fix
\begin{equation}\label{e:layers def}
\begin{aligned}
&m_1 = 50 \log d, \quad m_2 = d/2 - d^{0.7},\\
&m_3 = d/2 - d^{0.6} \quad \text{ and } \quad m_4 = d/2 + d^{0.6}.    
\end{aligned}
\end{equation}
Note that $m_2,m_3,m_4 = (1+ o(1)) d/2$ and that $|L_{m_3,m_4}| = (1+o(1)) |V(Q^d)|$. Furthermore, we will assume for convenience that each $m_i$ is even.

Our construction will proceed `layer-by-layer' through the cube, gradually exposing both the edges of $Q^d_p$ and a randomly chosen partition $(V_1,V_2,V_3)$ of the vertex set. 
The said partition will be constructed in a way ensuring that it is sufficiently well-distributed through each layer.

\begin{definition}\label{def:spread partition}
Fix $q_2 = q_3 = C^{-1/80}$ and $q_1 = 1 - 2C^{-1/80}$. 
Given an even $i \in [m_3,m_4]$, we say that a partition $(V_1,V_2,V_3)$ of $L_{i,i+1}$ is \emph{well-spread} if each of the following holds.
\begin{itemize}
    \item $|L_j[V_k]| = (1 \pm C^{-1}) q_k |L_j|$ for each $j\in \{i,i+1\}$ and $k\in \{1,2,3\}$.
   \item For each $\{j,j'\} = \{i,i+1\}$ and $k\in \{1,2,3\}$,
   \[
   \left| \left\{ v \in L_j \colon d(v,L_{j'}[V_k]) \neq (1 \pm C^{-1})q_k d/2\right\} \right| = o(d^{-1}|L_j|).
   \]
\end{itemize}
Furthermore, we denote by $V_{bad} \subseteq L_{i,i+1}$ the vertices whose degree towards some of the sets $V_1,V_2,V_3$ on a neighbouring layer is `far' from its mean, that is, vertices that do not satisfy the second point above. Note that in a well-spread partition, $|V_{bad}| = o(|L_{i,i+1}|)$.
\end{definition}

\begin{remark}\label{r:wellspread}
For any even $i \in [m_3,m_4]$, standard applications of the Chernoff bound show that, if we choose the partition $(V_1,V_2,V_3)$ by assigning each vertex to the partition class $V_j$ with probability $q_j$ independently, then the resulting partition of $L_{i,i+1}$ is well-spread with probability $1- o(1/d)$.
\end{remark}

In this section, we will show that, for each even $i \in [m_3,m_4]$, given a well-spread partition $(V_1,V_2,V_3)$ of $L_{i,i+1}$, \whp we can construct a family $\cP_1(i)$ of disjoint paths of length $\omega_C(1)$ in $Q^d_p[L_{i,i+1}[V_1]]$ which cover a $(1-o_C(1))$ proportion of the vertices of $L_{i,i+1}[V_1]$. 
In particular, \whp the union of $\cP_1(i)$ over all even $i \in [m_3,m_4]$ will cover a $(1-o_C(1))$ proportion of $V(Q^d)$.

For this construction, and for later steps, it will be important that these paths do not contain any of the vertices with irregular degree to some partition class, that is, those in $V_{bad}$.

\begin{lemma}\label{lem:paths_in_M}
Fix even $i \in [m_3,m_4]$ and a well-spread partition $(V_1,V_2,V_3)$ of $L_{i,i+1}$.
Then, with probability $1-o(1/d)$, $Q^d_p[L_{i,i+1}[V_1\setminus V_{bad}]]$ contains a family $\cP_1(i)$ of vertex-disjoint paths of length between $C^{1/6}$ and $2C^{1/6}$ which cover a total of at least $(1-C^{-1/5})|L_{i,i+1}[V_1]|$ vertices.
\end{lemma}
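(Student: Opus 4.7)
My plan is to construct the required path family in $G := Q^d_p[L_{i,i+1}[V_1 \setminus V_{bad}]]$ by taking the union of two near-perfect matchings of random subgraphs of $G$, and then cutting the resulting components into subpaths of controlled length.

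\emph{Setup and concentration.} Set $H_0 := Q^d[L_{i,i+1}[V_1 \setminus V_{bad}]]$, a bipartite graph with parts $A := L_i[V_1 \setminus V_{bad}]$ and $B := L_{i+1}[V_1 \setminus V_{bad}]$. By the second bullet of \Cref{def:spread partition} combined with $|V_{bad} \cap L_j| = o(|L_j|/d)$ (a routine counting argument to bound vertices with many $V_{bad}$-neighbours), outside an exceptional subset of size $o(|V(H_0)|/d)$, every $v \in V(H_0)$ has degree $(1 \pm o(1)) q_1 d/2$ in $H_0$. Let $V^\star \subseteq V(H_0)$ be the set of vertices whose degree in $G$ lies outside $[q_1 C/4, q_1 C]$; a vertex-wise Chernoff bound gives $\Pr[v \in V^\star] \le \exp(-\Omega(C))$, and a McDiarmid-type concentration argument (using that changing a single edge alters $|V^\star|$ by at most $2$) yields $|V^\star| = o(|V(H_0)|/d)$ with probability $1 - o(1/d)$.

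\emph{Two matchings.} Writing $1 - p = (1 - p_1)(1 - p_2)$ with $p_1 = p_2 = 1 - \sqrt{1-p}$, expose $G$ as $G^{(1)} \cup G^{(2)}$ with $G^{(j)} := Q^d_{p_j} \cap H_0$ independent. Each $G^{(j)}$ inherits near-regularity of degree $\Theta(C)$ (outside a $o(|V(H_0)|/d)$-sized exceptional set) via the above concentration argument, and hence admits a near-perfect matching $M^{(j)}$ covering all but $\exp(-\Omega(C))|V(H_0)|$ vertices (via a K\"onig-type theorem applied to the ``bulk'' of $G^{(j)}$, or a randomized iterative matching extraction). Let $F := M^{(1)} \cup M^{(2)}$: a subgraph of $G$ with maximum degree at most $2$ whose components are paths and cycles, and which covers all but an $\exp(-\Omega(C))$-fraction of $V(H_0)$.

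\emph{Cutting.} The central claim is that at most $C^{-1/5}|V(H_0)|$ vertices of $F$ lie in components of length less than $C^{1/6}$. Granting this, each remaining (long) component is cut into subpaths of length in $[C^{1/6}, 2C^{1/6}]$ by removing one edge every $\lceil 1.5 C^{1/6}\rceil$ edges, losing no vertex; after discarding the short components, the output is a path family of the required lengths covering at least $(1 - C^{-1/5} - \exp(-\Omega(C)) - o(1/d))|V(H_0)| \ge (1 - C^{-1/5})|L_{i,i+1}[V_1]|$ vertices.

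\emph{Main obstacle.} The heart of the argument is this short-component estimate. For a uniformly random pair of perfect matchings in $K_{n,n}$, classical cycle-length statistics of random $2$-regular bipartite graphs give that the expected vertex-mass in cycles of length at most $C^{1/6}$ is $O(C^{1/6})$, which is negligible compared to $C^{-1/5}|V(H_0)|$ since $|V(H_0)|$ is exponential in $d$. In our setting, however, each $M^{(j)}$ is a near-perfect matching in a \emph{random} subgraph $G^{(j)}$ rather than a uniformly random matching, so extra care is needed. One option is to choose each $M^{(j)}$ via a canonical randomized procedure (e.g.\ a uniformly random near-perfect matching of $G^{(j)}$), enabling a switching-type analysis of short cycles in $F$; a different route is to forgo matchings entirely and build paths of length $\approx C^{1/6}$ directly via a randomized DFS, carefully controlling the uncovered set at each step. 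Either way, the difficulty is verifying that the short-component estimate holds with the strong probability bound $1 - o(1/d)$, which requires a quantitative concentration argument for the cycle-count statistic and is the main technical challenge of the section.
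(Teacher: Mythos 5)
Your high-level plan — two near-perfect matchings whose union is cut into pieces of length $\Theta(C^{1/6})$ — matches the paper's. The construction of the matchings also parallels the paper's (the paper works with $H_p$ directly and applies K\"onig's theorem to $H_p\setminus U$, where $U$ is the set of abnormally-high-degree vertices, taking the two largest colour classes; your two-round exposure is a plausible variant). The gap is in what you call the ``main obstacle,'' the short-component estimate, and the difficulty you flag there is not the real one.

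You worry that $M^{(1)},M^{(2)}$ are not uniformly random matchings and contemplate a switching argument or a randomized DFS. The paper sidesteps this entirely: \emph{any} cycle in $M_1\cup M_2$ is a cycle in $Q^d_p[L_{i,i+1}]$, and the number of short cycles in $Q^d_p[L_{i,i+1}]$ is tiny for purely structural reasons, independent of the distribution of the matchings. Concretely, every cycle of length $2k$ in $Q^d$ through a fixed vertex is determined by a multiset of $k$ coordinates each used twice and an ordering of the $2k$ direction changes, giving at most $d^k(2k)!$ such cycles; a first-moment bound then shows that with probability $1-o(1/d)$ there are only $o(|L_{i,i+1}|)$ cycles of length at most $2C^{1/6}$ in the \emph{entire} percolated bipartite graph between two layers (Lemma~\ref{lem:short cycles}). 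So you never need to understand the matching distribution at all. You also do not address short \emph{path} components of $F$: these are bounded separately by observing that any path component of $M_1\cup M_2$ has at least one endpoint missed by one of the two matchings, and since each matching misses only $O(C^{-2/5})|V(H_0)|$ vertices there can be only that many path components, each of length at most $2C^{1/6}$. Your proposal is not wrong in outline, but as written it stops at the hard step, and the route you gesture at (switchings/DFS) is both unnecessary and substantially harder than the structural first-moment argument the paper uses.
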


The proof of \cref{lem:paths_in_M} is relatively simple. First, since the partition is well-spread, the graph $H=Q^d[L_{i,i+1}[V_1]]$ is close to being regular. 
Hence, when $C$ is large, \whp this will also be true after percolation with probability $p$. 
It will then follow from K\"onig's theorem (see, for example, \cite[Theorem 7.1.7]{W96}) that \whp $H_p$ contains two disjoint matchings $M_1, M_2$ covering almost all of its vertices, and the union $M_1 \cup M_2$ covers $(1-o_C(1))$ proportion of the vertices with long cycles and paths.

We delay the proof of \cref{lem:paths_in_M} and first show a couple of preparatory lemmas.

\begin{lemma}\label{lem:matchings}
Fix even $i \in [m_3,m_4]$ and a well-spread partition $(V_1,V_2,V_3)$ of $L_{i,i+1}$. Then,~with probability $1-o(1/d)$, $Q^d_p[L_{i,i+1}[V_1\setminus V_{bad}]]$ contains two edge-disjoint matchings $M_1$ and $M_2$, each covering at least $(1-4C^{-2/5})|L_{i,i+1}[V_1]|$ vertices. 
\end{lemma}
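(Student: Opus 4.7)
I will split the edges of $H_p$ into two edge-disjoint random subgraphs and find a near-perfect matching in each via the defect form of Hall's theorem. Assign every edge of $H_p$ independently to one of two subgraphs $H^{(1)}$ or $H^{(2)}$ by a fair coin toss, so that each $H^{(k)}$ is distributed as a $p/2$-percolation of the base graph $H := Q^d[L_{i,i+1}[V_1 \setminus V_{bad}]]$. The well-spread assumption, together with a routine additional removal of the few vertices with unusually many neighbours in $V_{bad}$, ensures that $H$ is an almost-regular bipartite graph between $A := L_i[V_1\setminus V_{bad}]$ and $B := L_{i+1}[V_1 \setminus V_{bad}]$, with $|A|, |B| = (1 \pm o(1))q_1 |L_i|$ (in particular $|A| \approx |B|$) and every vertex-degree equal to $(1\pm o(1)) q_1 d/2$. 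It is enough, for each $k \in \{1,2\}$, to produce in $H^{(k)}$ a matching of size at least $(1 - 3C^{-2/5})\min(|A|,|B|)$, which, for $d$ large enough given $C$, covers the required $(1-4C^{-2/5})|L_{i,i+1}[V_1]|$ vertices.

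By the defect form of Hall's theorem (applied to the smaller of $A, B$, say WLOG $A$), such a matching exists in $H^{(k)}$ unless there are sets $S \subseteq A$ and $U \subseteq B$ with $|S|+|U| > |B| + 3C^{-2/5}|A|$ and with no $H^{(k)}$-edges between $S$ and $U$. For any such violating pair, writing $S^c := A \setminus S$, we get $|U| - |S^c| = |S|+|U|-|A| \geq 3C^{-2/5}|A|$, and the almost-regular degree condition yields
\[
e_H(S, U) \geq e_H(A, U) - e_H(S^c, U) \geq \tfrac{q_1 d}{2}\bigl((1-o(1))|U| - (1+o(1))|S^c|\bigr) = \Omega(d\, C^{-2/5}|A|).
\]
Hence the probability that $H^{(k)}$ has no edge between $S$ and $U$ is at most $(1-p/2)^{e_H(S,U)} \leq \exp(-\Omega(C^{3/5}|A|))$, which beats the $2^{|A|+|B|}$ union bound over all such pairs $(S, U)$ for $C$ sufficiently large. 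The two resulting matchings are edge-disjoint by construction.

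The main obstacle is the uniform edge-count lower bound $e_H(S, U) = \Omega(d\, C^{-2/5}|A|)$: it uses only the well-spread degree condition and requires no deeper hypercube-specific tool such as Kruskal-Katona. Care is however needed in the initial pruning step that enforces uniform degree regularity on all of $A\cup B$: using $|V_{bad}| = o(|L_{i,i+1}|/d)$ together with a double-counting argument, the number of vertices of $L_{i,i+1}[V_1]$ with atypically many $V_{bad}$-neighbours is $o(|L_{i,i+1}|)$, so discarding them shrinks $|A|$ and $|B|$ by only a $(1-o(1))$ factor while preserving the $(1\pm o(1))q_1 d/2$ degree estimate uniformly on $A \cup B$.
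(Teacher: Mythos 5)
Your plan is correct in substance, and it takes a genuinely different route from the paper. The paper fixes a degree cap $\Delta = (1+C^{-1})q_1 d/2$ for $H := Q^d[L_{i,i+1}[V_1]]$, removes the small set $U$ of percolated vertices whose degree in $H_p$ exceeds $(1+\delta)\Delta p$, and invokes K\"onig's theorem to partition $E(H_p\setminus U)$ into $(1+\delta)\Delta p$ matchings; the two largest ones are taken as $M_1, M_2$, and Chernoff together with Azuma--Hoeffding (to concentrate $|E_U|$) give the probability bound. You instead split $H_p$ into two independent $p/2$-copies and verify a defect Hall condition by a union bound over violating pairs $(S,U)$, using the near-regularity of $H$ to lower-bound $e_H(S,U)$. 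The K\"onig route automatically produces edge-disjoint matchings from a single percolation; yours produces them via the coin-toss coupling. Your approach is more elementary (no K\"onig, no edge-exposure martingale), at the cost of a larger, though still comfortable, union bound over $2^{|A|+|B|}$ pairs; both give the same exponents since the only genuinely needed constant is $C^{3/5} \gg 1$.

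One technical remark: the ``double-counting'' pruning step you describe does not, as stated, preserve the uniform degree bound on the surviving vertices --- removing vertices with many $V_{bad}$-neighbours itself reduces degrees, and a single sweep does not stabilise. However, you do not actually need uniform degree after pruning: the quantity you need is only the aggregate lower bound $e_H(A,U) \geq (1-C^{-1})\tfrac{q_1 d}{2}|U| - \sum_{u\in U} |N(u)\cap V_{bad}|$, and the correction term is at most $d\,|V_{bad}| = o(|L_{i,i+1}|)$, which is negligible against the target $\Omega(C^{-2/5}d|A|)$. So you can drop the extra pruning entirely and carry the $o(|L_{i,i+1}|)$ additive error through the estimate $e_H(S,U) \geq e_H(A,U) - (1+C^{-1})\tfrac{q_1 d}{2}|S^c|$, which still yields $e_H(S,U) = \Omega(dC^{-2/5}|A|)$ and hence the exponent $\Omega(C^{3/5}|A|)$ beating the $2^{|A|+|B|}$ union bound for $C$ large.
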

\begin{proof}
Let $H\coloneqq Q^d[L_{i,i+1}[V_1]]$, $\delta\coloneqq C^{-2/5}$, $\Delta = (1+C^{-1})q_1d/2$ and $h\coloneqq |V(H)|$. Since the partition is well-spread, by \cref{def:spread partition}, every vertex in $V(H) \setminus V_{bad}$ has degree (in $H$) at most $\Delta$. Furthermore, 
\begin{equation}\label{e:edgesH}
(1-2C^{-1})q_1 dh/4 \leq|V(H) \setminus V_{bad}| (1-C^{-1})q_1 d/4 \leq |E(H)| \leq dh.
\end{equation}

Let $U$ be the set of vertices in $V(H) \setminus V_{bad}$ whose degree in $H_p$ is larger than
$(1+\delta) \Delta p$, and $E_U$ be the set of edges in $H_p$ incident to $U$.
For any edge $e = uv\in E(H)$, conditionally on $e\in E(H_p)$, $e$ belongs to $E_U$ only if either $u$ or $v$ is in $V(H) \setminus V_{bad}$ and is incident to at least $(1+\delta) \Delta p-1$ additional edges in $H_p$.
Hence, by the Chernoff bound,
\begin{align*}
\mathbb P(e\in E_U) 
&\le 2p\mathbb P(\mathrm{Bin}(\Delta,p)\ge (1+\delta) \Delta p-1)\\
&\le 2p\mathbb P(\mathrm{Bin}(\Delta,p) - \Delta p\ge C^{3/5}/4)\le 2p\exp(-C^{1/5}/20).
\end{align*}

In particular,
\begin{equation}\label{eq:exps}
\mathbb E[|E_U|] \le 2\exp(-C^{-1/5}/20) \mathbb E[|E(H_p)|].
\end{equation}

Furthermore, Chernoff's bound together with \eqref{e:edgesH} imply that, with probability $1-o(1/d)$, we have 
\[
|E(H_p)| \geq (1+o(1))\mathbb E[|E(H_p)|] \geq (1-3C^{-1})q_1 pdh/4.
\]
Moreover, note that adding/removing an edge to/from $H_p$ changes $|U|$ by at most two and $|E_U|$ by at most $2(1+\delta) \Delta p$. 
Thus, the Azuma-Hoeffding inequality (see, e.g., \cite[Chapter~7]{AS16}) together with~\eqref{e:edgesH} and~\eqref{eq:exps} imply that, with probability $1-o(1/d)$, we have $|E_U| \le C^{-1}\mathbb E[|E(H_p)|]$.

Now, K\"onig's theorem \cite[Theorem 7.1.7]{W96} implies the existence of a proper edge-colouring of $H_p\setminus U$ partitioning the edge set $E(H_p\setminus U)$ into $(1+\delta) \Delta p$ colour classes, each being a matching. The two largest matchings, $M_1$ and $M_2$, satisfy 
\begin{align*}
    |M_1|+|M_2|\ge \frac{2}{(1+\delta)\Delta p}(|E(H_p)|-|E_U|) \ge \frac{2(1-2C^{-1})|E(H_p)|}{(1+\delta)\Delta p}\ge (1-2\delta)h.
\end{align*}
Since $|M_1|,|M_2|\le h/2$, we that $|M_1|,|M_2|\ge (1/2-2\delta)h$, and thus each of $M_1,M_2$ cover at least $(1-4\delta)h$ vertices, as desired. 
\end{proof}
Since there are relatively few short cycles in $Q^d$, a simple first moment argument will show that \whp almost none of the vertices of $H_p$ are contained in cycles of length at most $2C^{1/6}$. Thus, typically almost all of the vertices covered by $M_1 \cup M_2$ are contained in cycles or paths of length between $C^{1/6}$ and $2C^{1/6}$.
\begin{lemma}\label{lem:short cycles}
Fix $i \in [m_3,m_4]$. Then,  with  probability $1-o(1/d)$, there are $o(|L_{i,i+1}|)$ cycles of length at most $2C^{1/6}$ in $Q^d_p[L_{i,i+1}]$.
\end{lemma}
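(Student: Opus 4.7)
The approach is a first moment argument: I will bound the expected number $\mathbb{E}[X]$ of short cycles and apply Markov's inequality. The target is $\mathbb{E}[X] = O(|L_{i,i+1}|/d^3)$ (with the implicit constant depending on $C$), so that Markov's inequality delivers the desired $o(1/d)$ tail.

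A preliminary observation is that $Q^d[L_{i,i+1}]$ has girth at least $6$: a hypothetical $4$-cycle through $u, v \in L_i$ and $a, b \in L_{i+1}$ would demand $\ind(a) = \ind(u) \cup \{x\}$ and $\ind(b) = \ind(u) \cup \{y\}$ for distinct $x, y \notin \ind(u)$, and then $\ind(v) = (\ind(u) \cup \{x\}) \setminus \{z\} = (\ind(u) \cup \{y\}) \setminus \{w\}$ for some $z, w \in \ind(u)$, forcing $\{x, z\} = \{y, w\}$, which is impossible. So I only need to control $k \geq 3$.

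For each $k$ with $3 \leq k \leq C^{1/6}$, I count $2k$-cycles in $Q^d[L_{i,i+1}]$ by encoding a cycle as a starting vertex $v_0 \in L_i$ together with a sequence of coordinate flips $(c_1, \ldots, c_{2k}) \in [d]^{2k}$. The closing condition forces each coordinate to occur an even number of times, so at most $k$ distinct coordinates appear; choosing them in $\binom{d}{k}$ ways, arranging the sequence in at most $(2k)!$ ways, and dividing by $4k$ (for the choice of start vertex and orientation) yields
\[
\#\{2k\text{-cycles in } Q^d[L_{i,i+1}]\} \leq \frac{|L_i|\binom{d}{k}(2k)!}{4k} \leq \frac{|L_i|(2dk)^k}{4k}.
\]
Multiplying by $p^{2k} = (C/d)^{2k}$ gives $\mathbb{E}[X_k] \leq |L_i|(2kC^2/d)^k/(4k)$. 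Since $2kC^2/d = o(1)$ for fixed $C$ and $k \leq C^{1/6}$, summing over $k \in \{3, \ldots, C^{1/6}\}$ is dominated by the $k = 3$ term, yielding $\mathbb{E}[X] = O(|L_{i,i+1}|/d^3)$.

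Applying Markov's inequality at the threshold $|L_{i,i+1}|/d$ then gives
\[
\mathbb{P}\bigl(X \geq |L_{i,i+1}|/d\bigr) \leq \frac{d \cdot \mathbb{E}[X]}{|L_{i,i+1}|} = O(1/d^2) = o(1/d),
\]
and since $|L_{i,i+1}|/d = o(|L_{i,i+1}|)$, the lemma follows. The main obstacle is ensuring $\mathbb{E}[X]$ is small enough: the straightforward bound $|L_i| d^{2k-1}$ on the number of $2k$-cycles would only yield $\mathbb{E}[X] = O(|L_{i,i+1}|/d)$, from which Markov gives the insufficient $O(1/d)$. Two ingredients are crucial: (a) using the even-multiplicity constraint to cap the number of distinct coordinates at $k$ rather than $2k$, saving a factor of $d^{k-1}$, and (b) eliminating the $k = 2$ contribution via the no-$4$-cycle observation, which alone would otherwise dominate the sum.
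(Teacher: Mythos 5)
Your proof is correct and follows essentially the same approach as the paper: a first-moment bound on the number of short cycles, exploiting the fact that each coordinate must be flipped an even number of times around a cycle (so a $2k$-cycle involves at most $k$ distinct coordinates), followed by Markov's inequality. The girth-$6$ observation you use to discard $k=2$ is correct but not actually essential: the paper keeps the $k=2$ term and obtains $\mathbb{E}[X] = O(|L_{i,i+1}|/d^2)$, which is still $o(|L_{i,i+1}|/d)$, so Markov at a threshold like $|L_{i,i+1}|/\sqrt{d}$ already gives probability $O(d^{-3/2}) = o(1/d)$; your ingredient (b) is thus a pleasant sharpening of the constant exponent, not a needed step.
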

\begin{proof}
Fix $\ell\coloneqq C^{1/6}$. We note that every cycle in $Q^d$ must contain an even number of edges along each coordinate and, in particular, must have even length. 
Thus, given a vertex $v \in L_{i,i+1}$, there are at most $\binom{d}{k}(2k)!\le d^k(2k)!$ cycles of length $2k$ containing $v$ (indeed, every coordinate must participate twice in a cycle, and there are at most $(2k)!$ ways to order the changes in the coordinates). Hence, the expected number of cycles of length $2k\le 2\ell$ is dominated by
\[
    \sum_{k=2}^{\ell} |L_{i,i+1}|\cdot 2k\cdot d^k(2k)!\cdot \left(\frac{C}{d}\right)^{2k}=O\left(\frac{|L_{i,i+1}|}{d^2}\right),
\]
and the conclusion follows from Markov's inequality.
\end{proof}

We now prove \cref{lem:paths_in_M}.
\begin{proof}[Proof of \cref{lem:paths_in_M}]
As before, fix $\ell\coloneqq C^{1/6}$, $H\coloneqq Q^d[L_{i,i+1}[V_1]]$ and $h \coloneqq |V(H)|$. Note that, since the partition is well-spread, $h \geq (1-C^{-1})q_1 |L_{i,i+1}|$.

By \cref{lem:matchings,lem:short cycles}, with probability $1-o(1/d)$, there exist two edge-disjoint matchings $M_1$ and $M_2$, each covering at least $(1-4C^{-2/5})|L_{i,i+1}[V_1]|$ vertices and there are at most $o(|L_{i,i+1}|) = o(h)$ cycles of length at most $2C^{1/6}$ in $Q^d_p[L_{i,i+1}]$.

Note that $M_1 \cup M_2$ is a disjoint union of paths and cycles, and let $A$ (resp. $B$) be the set of vertices in cycles (resp. paths) in $M_1\cup M_2$ of length at most $2\ell$. Since $|A| = o(h)$ by assumption, it remains to bound $|B|$ from above. 

Note that a vertex in $B$ must be contained in a path whose endpoints belong to only one of the matchings $M_1$ and $M_2$. Since there are at most $8C^{-2/5}h$ such vertices, $|B|\le (2\ell+1)\cdot 8C^{-2/5}h/2$.
In total, there are at least $\left(1-C^{-1/5}\right)h$ vertices in cycles of length at least $2\ell$ or paths of length at least $2\ell$ in $M_1\cup M_2$. These can be cut into paths of length between $\ell$ and $2\ell$, as required.
\end{proof}

\section{Merging paths via a modified DFS process}\label{sec: part 1b}

In this section, we will show how to use the vertices in $L_i[V_2]$ to merge each family $\cP_1(i)$ into a family of significantly longer paths, of length $\omega_C(d)$, which still cover almost all vertices in each $L_{i,i+1}$ for even $i\in [m_3,m_4]$. More concretely, we will prove the following. 

\begin{proposition}\label{prop: long paths}
Fix even $i \in [m_3,m_4]$ and a well-spread partition $(V_1,V_2,V_3)$ of $L_{i,i+1}$. Then, with probability $1-o(1/d)$, $Q^d_p[L_{i,i+1}[V_1 \cup V_2]]$ contains a family $\cP_2(i)$ of vertex disjoint paths, each of length in the interval $[C^{1/12}d/4, 4C^{1/12}d]$, which together cover at least $(1-C^{-1/800})|L_{i,i+1}|$ vertices. 
\end{proposition}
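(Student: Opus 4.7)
The plan is to merge the short paths of $\cP_1(i)$ produced by \Cref{lem:paths_in_M} into the desired family $\cP_2(i)$ by running a variant of depth-first search on an auxiliary bipartite graph, using the reservoir $V_2$ as a pool of ``merger'' vertices. After discarding those paths of $\cP_1(i)$ whose two endpoints lie in the same layer, or whose endpoints neighbour too many vertices of $V_{bad}$, at a further cost of an $O(C^{-1/6})$ fraction of vertices, each remaining path $P$ has one endpoint $a_P \in L_i$ and one endpoint $b_P \in L_{i+1}$. Define the bipartite graph $H$ with parts $X = \{a_P, b_P : P \in \cP_1(i)\}$ and $Y = L_{i,i+1}[V_2 \setminus V_{bad}]$, placing an edge of $H$ between $x \in X$ and $y \in Y$ whenever they lie in adjacent layers and the corresponding edge of $Q^d$ is retained in $Q^d_p$. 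Crucially, since \Cref{lem:paths_in_M} only reveals edges inside $V_1$, every edge of $H$ is fresh and is independently present with probability $p$. Using the well-spreadness of the partition, $H$ is heavily unbalanced (with $|Y|/|X| = \Theta(C^{1/6 - 1/80})$), and the typical $H$-degrees are $\Theta(C^{1-1/80})$ on $X$ and $\Theta(C^{5/6})$ on $Y$, both divergent in $C$.

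The modified DFS maintains a current ``spine'' $S$ (a partially merged path), a pool $T \subseteq \cP_1(i)$ of paths not yet incorporated, and a set $U \subseteq Y$ of used mergers. Write $x$ for the current free endpoint of $S$. At each step, search for some $y \in Y \setminus U$ on the layer opposite to $x$ with $xy \in E(H)$, and then for an endpoint $x'$ of some $P' \in T$ (distinct from the path currently at the end of $S$) with $x'y \in E(H)$. If both searches succeed, the spine is extended: $S$ becomes the concatenation of $S$, $y$, and $P'$, the new free endpoint becomes the other endpoint of $P'$, $y$ joins $U$ and $P'$ leaves $T$. The spine is added to $\cP_2(i)$ as soon as its length falls within $[C^{1/12}d/4, 4C^{1/12}d]$; if instead extension fails before this window is reached, the current spine is discarded and a new one is launched from some remaining $P \in T$.

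The analysis hinges on two points. First, edges of $H$ are revealed only on demand, and throughout the process $|U| \le |\cP_1(i)| = o(|Y|)$ with only a vanishing fraction of endpoints of $X$ touched, so the conditional distribution of the unexplored edges of $H$ remains close to its unconditional counterpart. Second, Chernoff-type estimates combined with the typical degrees above show that, conditional on the DFS history, each extension attempt succeeds with probability $1 - \exp(-\Omega(C^{5/6-1/80}))$; hence, the probability that a single spine fails to achieve $d/C^{1/12}$ successful merges (the number needed for length at least $C^{1/12}d/4$) is exponentially small in a positive power of $C$. A union bound over the $O(|L_{i,i+1}|/(dC^{1/12}))$ spines launched during the procedure bounds the total number of vertices on discarded spines by $C^{-1/800}|L_{i,i+1}|$; combined with the $O(C^{-1/5}|V_1|)$ vertices missed already in \Cref{lem:paths_in_M} and the $|V_3| = \Theta(C^{-1/80}|L_{i,i+1}|)$ vertices that never belong to $V_1 \cup V_2$, the total shortfall remains at most $C^{-1/800}|L_{i,i+1}|$. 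The main obstacle is the probabilistic bookkeeping required to preserve independence across DFS steps: at each step one must reveal only the edges decisive for the current extension, and verify that the pool of useful endpoints in $T$ (and available mergers in $Y \setminus U$) contracts slowly enough that the per-step failure probability stays at $\exp(-\Omega(C^{5/6-1/80}))$ even under the worst-case history.
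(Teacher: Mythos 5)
The overall blueprint --- an auxiliary bipartite graph between path ``merge targets'' and the reservoir $V_2$, explored via a modified DFS that reveals fresh edges on demand --- is the same as in the paper. However, you make two substantive choices that differ from the paper, and they introduce gaps that are not patched by the sketch.

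\textbf{Single endpoints versus segments.} You take merge targets to be single endpoints $a_P,b_P$, so $X$-vertices have $H$-degree roughly $q_2d/2 = C^{-1/80}d/2$. The paper instead takes merge targets to be \emph{segments} of length $2C^{1/8}$, which pushes the $\Aux$-degree of an $A$-vertex up to $\geq C^{1/10}d$. This is not cosmetic. The paper's argument that not too many merge targets become useless (\cref{l: peeling}) runs a double-counting argument whose left-hand side is precisely this minimum $\Aux$-degree, and the inequality $C^{1/10}d\,|W_A|\le 10\,C^{1/6-1/80-1/14}d\,|A|$ only closes because $1/10 > 1/6-1/80-1/14$. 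With $X$-degree $\approx C^{-1/80}d$, the analogous inequality would give $|W_A|\gg|A|$, i.e.\ nothing. So the ``vanishing fraction of endpoints of $X$ touched'' assertion, which you invoke as one of the two pillars of the analysis, cannot be established this way once the spine gets long enough that the DFS history has eliminated many candidate edges.

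\textbf{No clean-up / degeneracy control.} Your key probabilistic claim is that, \emph{conditionally on the DFS history}, each extension succeeds with probability $1-\exp(-\Omega(C^{5/6-1/80}))$ ``even under the worst-case history.'' This is exactly the difficult part, and the sketch only names it as an obstacle without resolving it. The worst-case history could, for instance, have placed most of the used mergers $U$, or most previously queried (and failed) $H$-edges, inside the $O(C^{-1/80}d)$-vertex neighbourhood of the current endpoint $x$; Chernoff on the unconditional degree does not control this. The paper's device for exactly this issue is the clean-up subroutine that, between phases, iteratively discards vertices whose degree into the unexplored part of $\Aux$ has fallen below a deterministic threshold $C^{-1/14}d$; this guarantees the minimum-degree condition used in \cref{l: DFS} to show a phase produces a $\Theta(d)$-length spine with probability $1-\exp(-d/C)$, and then \cref{l: peeling} shows the cleaned-up vertices are few. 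Your proposal has no analogue of this mechanism.

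Two smaller points. The claimed per-spine failure probability $\exp(-\Omega(C^\alpha))$ does not by itself give the $1-o(1/d)$ guarantee of the proposition; you still need a concentration (or at least a Markov) argument over the exponentially many spines, which the paper handles in \cref{l: DFS2}. And discarding paths whose endpoints lie in the same layer cannot cost only an $O(C^{-1/6})$ fraction of vertices unless what you actually do is trim a vertex off such paths; about half of the paths in $\cP_1(i)$ may have both endpoints in the same layer.

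In short: the skeleton matches the paper, but replacing segments by endpoints breaks the degree bound that the paper's clean-up-and-peeling analysis needs, and the proposal provides no substitute for that control on the conditional distribution during DFS.
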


In the sequel, we condition on the success of \cref{lem:paths_in_M} and the existence of the family of paths $\cP_1(i)$.
The idea will be to identify, for every path $P \in \cP_1(i)$, an initial and a terminal \emph{segment} of length $C^{1/8}$, noting that this is an $o_C(1)$ proportion of the path $P$. We can \emph{merge} two paths $P_1 \neq P_2 \in \cP_1(i)$ by finding a path of length 2 through $L_i[V_2]$ joining a segment of $P_1$ to a segment of $P_2$ (see Figure~\ref{fig: simple merge}). 
\begin{figure}
\centering
\includegraphics[width=0.55\textwidth]{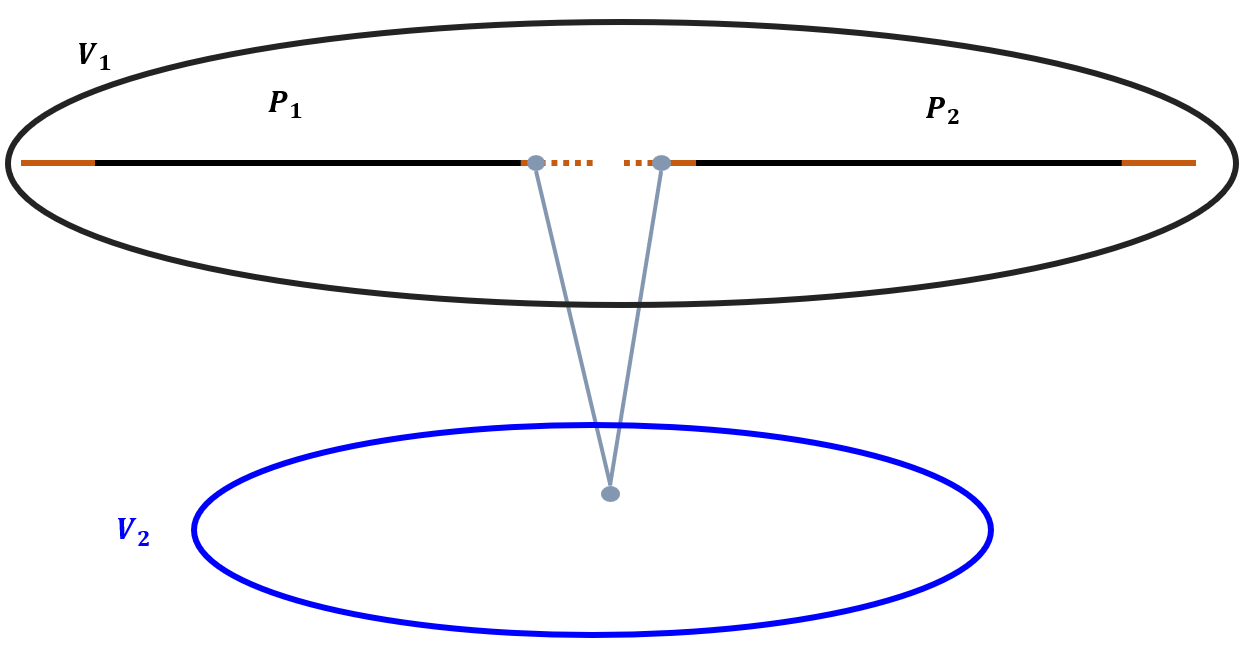}
\caption{Illustration of two paths $P_1, P_2$ merging via a path of length 2 through $V_2$. 
In each of $P_1,P_2$, the initial and the terminal segments $P_i^*$ appear in brown. Note that some vertices in these segments, those on the dashed lines, do not lie in the merged path.}
\label{fig: simple merge}
\end{figure}

We aim to construct paths of length $\omega_C(d)$ by finding sequences of paths $P_1, P_2, \ldots$ which can be consecutively merged in this manner. Since the partition is well-spread, the bipartite graph between $L_{i+1}[V_1]$ and $L_i[V_2]$ has sufficiently good expansion properties allowing us to do this in a relatively `greedy' manner via a Depth-First-Search type process.

More precisely, for all $P \in \cP_1(i)$, we denote by $P^+$ the subpath consisting of the first $2C^{1/8}$ vertices in $P$, and by $P^-$ the subpath consisting of the last $2C^{1/8}$ vertices in $P$. 
We write $\mathcal{S}_1(i) = \{ P^* \colon P \in \cP_1(i), * \in \{+,-\} \}$
for the set of all such subpaths. Note that each $P^* \in \cS_1(i)$ contains $C^{1/8}$ vertices in $L_{i+1}$.

We note that, since the partition is well-spread and each $P \in \cP_1(i)$ is contained in $V_1 \setminus V_{bad}$,
by \cref{def:spread partition}, for every $S \in \mathcal{S}_1(i)$,
\begin{equation}\label{e:segmentnhbr}
|N(S) \cap L_i[V_2]|\ge C^{1/8}\cdot (1-C^{-1})q_2d/2-(C^{1/8})^2\ge C^{1/10}d,
\end{equation}
where we also used the fact that every two vertices have at most one common neighbour in $L_i$.

We construct an auxillary bipartite graph $\Aux$ with parts $A =\mathcal{S}_1(i)$ and $B=L_i[V_2]$ where $S\in A$ is adjacent to $v\in B$ in $\Aux$ if $v$ is adjacent to some vertex in $S$ (in $Q^d$). 

We note some basic properties of the graph $\Aux$. By \cref{def:spread partition,lem:paths_in_M}, we have
\begin{align}
|A| = 2|\cP_1(i)|\ge 2\cdot\frac{(1-C^{-1/5})|L_{i,i+1}[V_1]|}{2C^{1/6}}\ge \frac{|L_{i,i+1}|}{2C^{1/6}} \ge \frac{C^{1/80}|B|}{3C^{1/6}}.\label{eq: A vs B}
\end{align}
Furthermore, by \eqref{e:segmentnhbr}, every $u\in A$ satisfies $d_{\Aux}(u)\ge C^{1/10}d$, and clearly every $v\in B$ satisfies $d_{\Aux}(v) \leq  d$.

To prove \cref{prop: long paths}, we will analyse a modified DFS exploration algorithm on the random subgraph $\Aux_p$ together with a perfect matching $M$ of $A$ consisting of the  pairs $\{P^+,P^-\}$ for all $P\in \cP_1(i)$ (that is, the edges of $M$ are determinstic). Note that every edge in $\Aux$ corresponds to an edge in $Q^d$ between $V_1$ and $V_2$, and in the proof of \cref{lem:paths_in_M} we only exposed the edges in $Q^d_p[V_1]$. 

In the execution of the DFS, every time we visit a vertex $v \in A$, if the DFS has not yet traversed the edge in $M$ which is incident to $v$, then we continue our exploration via this edge. Otherwise, we continue our exploration following the usual DFS algorithm on $\Aux_p$.

We will run this algorithm for several \emph{phases}, each corresponding to an execution of our DFS algorithm on a subgraph of $\Aux_p \cup M$ lasting for $O(d)$ \emph{steps}. We will see (in \cref{l: DFS}) that the expansion properties of $\Aux$ are sufficient to guarantee that each phase likely produces a path of length $\omega_C(d)$ in $Q^d_p[L_{i,i+1}]$.

However, the removal of vertices used in earlier phases might influence the expansion properties of the remaining graph in later phases. An essential part of the algorithm is then a \emph{clean up subroutine} performed between phases, which will delete a small set of vertices to preserve the expansion properties of the graph. 
We will show (in \cref{l: DFS2}) that typically only a $o_C(1)$ proportion of the vertices are deleted during the clean up subroutines, and the paths constructed in the successful rounds cover almost all the vertices of $\Aux$.

We continue with a formal description of the algorithm. 

\subsection{\texorpdfstring{DFS exploration algorithm on $\mathbf{\Aux\cup M}$ (DFS-Aux)}{}} Fix an arbitrary ordering $\sigma$ of $V(\Aux)$ and, for every vertex $v$ in $\Aux$, fix an arbitrary ordering $\sigma_v$ of the edges incident to $v$.
For every edge $e$ in $\Aux$, denote by $X_e$ the indicator random variable of the event that $e$ remains in $\Aux_p$.

Throughout the algorithm DFS-Aux, we maintain five sets: the set of vertices $U_1$ already processed in the current phase; the set $U_2$ which spans the path currently explored; the set $U$ of vertices processed in previous phases; the set $W$ of vertices of too low degree and the set $Z$ of vertices which are yet to be processed. We further maintain a family $\cU$ of paths in $\Aux_p\cup M$. The sets $U_1, U_2, U$ and $W$ are initially empty and gradually grow, while $Z$ contains the vertices yet to be processed; in particular, $Z = V(\Aux)$ in the beginning and decreases throughout.
DFS-Aux will proceed into \emph{phases} further divided into several \emph{steps}.
The sets $U_1,U_2,$ and $Z$ change gradually within every single phase, 
while the sets $U,W$ and the family $\cU$ do not vary during phases but only between them.
We note here that we expose the edges of $\Aux_p$ during the exploration algorithm.

At the beginning of each phase, there is a \emph{clean up subroutine} which serves to maintain the expansion properties of the graph we are exploring. 
It consists in iteratively adding to $W$ vertices in $\Aux \setminus (U\cup W)$ of degree $C^{-1/14}d$ or less. 

After the clean up subroutine, there is an \emph{exploration phase}.
Every step in this phase consists of exposing the value of a random variable $X_e$, thus querying if the edge $e$ is in $\Aux_p$ or not.
At the start of each phase, we will have that $U_1=U_2 = \varnothing$ and each phase will run as long as $|U_1\cup U_2|<(C^{-1}+C^{-1/12})d$ and the number of queried edges is less than $2 C^{-13/12} d^2$.

During one phase, the set $U_2$ will have the role of a \emph{stack} where vertices added later are discarded earlier in the process (that is, $U_2$ follows a first-in-last-out rule). In particular, the most recent vertex added to $U_2$ will be called the vertex \emph{on top of the stack}.

In each step, consider the top vertex $v$ in the stack $U_2$ (if it exists). We then execute the first valid action among (a), (b) and (c) as long as possible, until we manage to execute (d) for once. 
\begin{itemize}
\item[(a)] If $v = V_P^{\pm}$ and $V_P^{\mp}$ is still in $Z$, move $V_P^{\mp}$ from $Z$ to $U_2$.
\item[(b)] If all the edges incident to $v$ have already been queried, then move $v$ from $U_2$ to $U_1$.
\item[(c)] If the stack is empty, we add the first vertex (with respect to $\sigma$ of $A\setminus (U\cup W\cup U_1)$ to $U_2$.
 \item[(d)] Otherwise, $v$ is incident to at least one unqueried edge. Consider the first such edge according to the ordering $\sigma_v$, say $vu$, and query it. If $X_{vu} = 1$, then move $u$ from $Z$ to $U_2$. 
\end{itemize}
Note that consecutive vertices in the stack are always adjacent in $\Aux_p\cup M$, so $U_2$ spans a path in this graph at any moment during the process. 
Furthermore, we note that, at every step, all edges between $U_1$ and $Z$ have been revealed to be absent in $\Aux_p$ at previous steps within the same phase of DFS-Aux.

At the end of every phase, the set $\{U_2\}$ is added to $\cU$ and all vertices in $U_1$ and $U_2$ are moved to $U$. 
The DFS-Aux algorithm terminates when $U\cup W = V(\Aux)$.

The vertices in $U\cup W$ should be thought of as \emph{forbidden} (for different reasons) and, in particular, once a vertex is added to $U\cup W$, DFS-Aux will not query edges incident to it in the future. The vertices in $U$ have been forbidden as we have already \emph{explored} them in the process of trying to construct our long paths, whereas the vertices in $W$ were forbidden during the clean up subroutine, as their degree became too low in the graph $\Aux\setminus (U \cup W)$ before the next exploration phase.

We now turn to the analysis of DFS-Aux. We will first show that, independently of the history of the process, in each phase, it is quite likely that a long path is added to $\cU$.

\begin{lemma}\label{l: DFS}
At the end of each phase of \textnormal{DFS-Aux}, for any outcome of the previous phases, with probability at least $1-\exp(-d/C)$, we have that  $|U_2|\geq  C^{-1/12}d$.
\end{lemma}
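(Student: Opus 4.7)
The plan is to split according to which stopping condition triggers at the end of the phase; in both cases, I suppose for contradiction that $|U_2|_{\mathrm{end}} < C^{-1/12}d$. \emph{If the phase ends because $|U_1 \cup U_2|_{\mathrm{end}} \geq (C^{-1}+C^{-1/12})d$}, then $|U_1| \geq C^{-1}d$, and since every $v \in U_1$ has had all its edges in $\Aux$ queried (the precondition of (b)),
\[
Q \;\geq\; |U_1 \cap A|\cdot C^{1/10}d \;+\; |U_1 \cap B|\cdot C^{-1/14}d,
\]
using $d_\Aux(u) \geq C^{1/10}d$ for $u \in A$ from \eqref{e:segmentnhbr} and the post-cleanup bound $d_\Aux(v) > C^{-1/14}d$ for $v \in B$ processed during the phase. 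At least one of $|U_1 \cap A|, |U_1 \cap B|$ is $\geq C^{-1}d/2$, so $Q \geq \tfrac{1}{2}C^{-9/10}d^2$ or $Q \geq \tfrac{1}{2}C^{-15/14}d^2$; both strictly exceed the query cap $2C^{-13/12}d^2$ once $C$ is large (since $-9/10,-15/14 > -13/12$), contradicting $Q \leq 2C^{-13/12}d^2$. So in this case the lemma holds deterministically.

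\emph{If instead the phase ends with $Q = 2C^{-13/12}d^2$} (and $|U_1 \cup U_2|_{\mathrm{end}} < (C^{-1}+C^{-1/12})d$), the same bookkeeping deterministically yields $|U_1 \cap A| \leq 2C^{-71/60}d$ and $|U_1 \cap B| \leq 2C^{-85/84}d$, hence $|U_1| = o_C(C^{-1/12}d)$; it therefore suffices to show $|U_1 \cup U_2|_{\mathrm{end}} \geq (1+o_C(1))C^{-1/12}d$ with probability at least $1-\exp(-d/C)$. Since the indicators $X_e$ of queried edges are i.i.d.\ $\mathrm{Bernoulli}(p)$, the number $S$ of successful queries follows $\mathrm{Bin}(Q,p)$ with mean $2C^{-1/12}d$, and Chernoff's inequality yields $S \geq \tfrac{7}{4}C^{-1/12}d$ with failure probability $\exp(-\Theta(C^{-1/12}d)) \leq \exp(-d/C)$ for $C$ large.

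Each successful query is either ``new''---the other endpoint was in $Z$ at the query time, producing an addition to $U_2$ via (d)---or a ``back'' query to a vertex already on the stack. Writing $N_d$ and $S_{\mathrm{back}}$ for the respective counts, $|U_1 \cup U_2| \geq N_d = S - S_{\mathrm{back}}$. The main obstacle is to argue $S_{\mathrm{back}} \leq \tfrac{1}{2}C^{-1/12}d$ with comparable probability: every back edge lies in $\Aux_p$ restricted to $U_1 \cup U_2$, and the bipartite bound $e(\Aux[T]) \leq |T \cap B|\cdot d$ applied with $|T| = |U_1\cup U_2| \leq (C^{-1}+C^{-1/12})d$ limits the pool of potential back edges, while a conditional-concentration argument on the DFS history (exploiting that $|U_2|$ itself remains bounded throughout the phase) handles the ones actually revealed by the algorithm. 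Combining these yields $N_d \geq \tfrac{5}{4}C^{-1/12}d$, so $|U_2|_{\mathrm{end}} \geq N_d - |U_1| \geq C^{-1/12}d$, as required.
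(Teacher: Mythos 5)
Your proposal follows the same overall case-split as the paper (phase ends via the size cap vs.\ the query cap) and uses the same Chernoff-on-successful-queries idea in the second case, but there are two genuine gaps.

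The first is in Case~1. The lower bound $Q \geq |U_1 \cap A| \cdot C^{1/10}d + |U_1 \cap B| \cdot C^{-1/14}d$ uses $d_\Aux(u)\ge C^{1/10}d$ for $u\in A$, which is the degree of $u$ in the \emph{full} auxiliary graph $\Aux$. But DFS-Aux never queries edges incident to $U\cup W$, so when $u$ is moved to $U_1$ all we know is that its edges in $\Aux\setminus(U\cup W)$ have been queried; after the clean-up the only available min-degree bound on that pruned graph is $C^{-1/14}d$, for $A$-vertices and $B$-vertices alike. (Summing full degrees over $U_1$ also double-counts edges with both endpoints in $U_1\cup U_2$.) The paper avoids both issues by counting, for each $v\in U_1$, only the edges from $v$ to $Z$ at the moment $v$ is popped: there are at least $C^{-1/14}d-|U_1\cup U_2|\ge C^{-1/14}d/2$ of them, each counted once, giving $Q\ge |U_1|C^{-1/14}d/2$ and hence $|U_1|\le 4C^{-85/84}d<C^{-1}d$. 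That deterministic bound already settles Case~1.

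The second gap is the back-query bookkeeping in Case~2. In DFS-Aux the only unqueried edges of the top-of-stack vertex that can still be queried go to $Z$ (edges to $U\cup W$ are explicitly never queried, and edges to $U_1$ are already queried), so every successful query moves a vertex from $Z$ into $U_2$ and adds at least one element to $U_1\cup U_2$; this is why the paper can directly apply Chernoff to $\mathrm{Bin}(t,p)$ and conclude. Your quantity $S_{\mathrm{back}}$ is therefore not part of the intended algorithm; and even if one permitted querying back edges, the sketched bound on $S_{\mathrm{back}}$ does not go through: $e(\Aux[U_1\cup U_2]) \le |(U_1\cup U_2)\cap B|\cdot d = O(C^{-1/12}d^2)$, which \emph{exceeds} the query budget $2C^{-13/12}d^2$ and so does not cap back queries below $\tfrac12 C^{-1/12}d$, while the appeal to ``a conditional-concentration argument on the DFS history'' is left unspecified. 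So the very step you flag as ``the main obstacle'' remains open in the proposal.
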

\begin{proof}
Recall that, by definition of the DFS-Aux algorithm, each phase ends when either the set $U_1\cup U_2$ reaches size $(C^{-1}+C^{-1/12})d$, or $2 C^{-13/12} d^2$ queries have been made. 

Let us first show that, at the end of the phase, $|U_1|< C^{-1}d$. Indeed, by the condition on the minimum degree of the graph $\Aux\setminus(U\cup W)$ ensured by updating the set $W$ at the beginning of every phase, and since $|U_1\cup U_2|\le (C^{-1}+C^{-1/12})d$, there must be at least
\begin{equation*}
    \left({d}{C^{-1/14}}-|U_1\cup U_2|\right)|U_1|\ge {d}{
    C^{-1/14}} |U_1|/2
\end{equation*}
edges between $U_1$ and $Z$ which have been queried at the current phase. As there are at most $2{C^{-13/12}}{d^2}$ queries in each phase, we have that $|U_1|< C^{-1}d$.

We now consider the two stopping conditions separately and show that, in each case, with probability at least $1-\exp(-d/C)$, we have that $|U_2|\ge C^{-1/12}d$. 
\vspace{0.5em}
\\\noindent
\textbf{Case 1.} Assume $|U_1\cup U_2| = (C^{-1}+C^{-1/12})d$. As $|U_1| \leq C^{-1}d$, we have $|U_2| \geq C^{-1/12}d$. 
\vspace{0.5em}
\\\noindent
\textbf{Case 2.} Assume $t = 2 C^{-13/12} d^2$ queries have been made. 
Every query is successful with probability $p$ and, for each successful query, one or two vertices are added to $U_1\cup U_2$: the unrevealed endpoint of the new edge and possibly its partner in $M$.

However, by the Chernoff bound, the number of successful queries during the current phase is smaller than $(C^{-1/12}+C^{-1})d$ with probability at most
\begin{align*}
\mathbb{P}\left(\mathrm{Bin}(t,C/d) \leq  (C^{-1/12} + C^{-1})d\right)\le \exp\left(-\frac{d}{C}\right). 
\end{align*}
If this event does not hold, then, since  $|U_1| \leq  C^{-1}d$, it follows that $|U_2|\geq  C^{-1/12}d$.
\end{proof}

With \cref{l: DFS} at hand, we can show that the paths in the family $\mathcal{U}$ produced by DFS-Aux after its termination cover almost all vertices in the final set $U$.

\begin{lemma}\label{l: DFS2}
At the end of \textnormal{DFS-Aux}, with probability $1-o(1/d)$, the set of paths in $\mathcal{U}$ of length at least $C^{-1/12}d$ spans at least $(1-2C^{-11/12})|U|$ vertices of $U$. 
\end{lemma}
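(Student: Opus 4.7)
The plan is to decompose the vertices of $U$ that do not lie on any path of $\cU$ of length at least $C^{-1/12}d$ into two parts, according to whether they arise from a successful phase (one with $|U_2^{(i)}|\ge C^{-1/12}d$) or a failed phase:
\[
W^* \;:=\; \sum_{i \in S}|U_1^{(i)}| \;+\; \sum_{i \in F}\big(|U_1^{(i)}|+|U_2^{(i)}|\big),
\]
where $S$ and $F$ denote the sets of successful and failed phases, respectively. The goal is to show $W^* \le 2C^{-11/12}|U|$ with probability $1-o(1/d)$ by bounding each sum by $C^{-11/12}|U|$.

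The first sum I plan to handle deterministically: the proof of \Cref{l: DFS} shows that $|U_1^{(i)}|\le C^{-1}d$ in \emph{every} phase, successful or not; combined with the inequality $|U|\ge\sum_{i\in S}|U_2^{(i)}|\ge |S|\cdot C^{-1/12}d$ coming from the definition of a successful phase, this gives $|S|\le C^{1/12}|U|/d$ and so $\sum_{i\in S}|U_1^{(i)}|\le C^{-1}d\cdot|S|\le C^{-11/12}|U|$.

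The failed-phase contribution is where the probabilistic argument is needed. My plan is to first make two simple deterministic observations about the algorithm: every phase opens with a single invocation of rule (c) that pushes a fresh vertex of $A$ onto $U_2$, and that vertex is then moved into $U$ at the end of the phase; since no vertex of $A$ is ever pushed by rule (c) twice, I obtain the deterministic estimates $N\le |A|$ (the total number of phases) and $|U|\ge N$. Writing $Y_i=\mathbf{1}\{|U_2^{(i)}|<C^{-1/12}d\}$, so that $|F|=\sum_{i=1}^N Y_i$, \Cref{l: DFS} supplies the conditional bound $\mathbb{E}[Y_i\mid \mathcal{F}_{i-1}]\le p_f:=\exp(-d/C)$. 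An Azuma–Hoeffding-type inequality applied to the supermartingale $\sum_{j\le i}(Y_j-p_f)$ stopped at the bounded time $N\le |A|$ then gives, with probability $1-o(1/d)$,
\[
|F|\;\le\;\frac{C^{-5/6}N}{2d}.
\]
Here the required deviation from the conditional mean is very large, since for fixed $C$ and $d\to\infty$ we have $p_f=\exp(-d/C)\ll C^{-5/6}/(2d)$. Combined with $|U|\ge N$, this immediately yields
\[
\sum_{i\in F}(|U_1^{(i)}|+|U_2^{(i)}|)\;\le\; 2C^{-1/12}d\cdot |F|\;\le\; C^{-11/12}N\;\le\; C^{-11/12}|U|,
\]
and together with Step~1 this finishes the proof.

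The main obstacle I anticipate is the concentration step: $|F|$ is a sum of conditionally Bernoulli indicators whose total number of terms $N$ is itself a random stopping time depending on the $Y_i$'s, and the natural target threshold involves the random quantity $|U|$. The decisive trick that makes the standard martingale concentration applicable is the deterministic pairing $|U|\ge N$, which recasts the target inequality in the scale-free form $|F|\le C^{-5/6}N/(2d)$ — a bound only on the sequence of indicators and its length — and uncouples the concentration question from the randomness of $|U|$.
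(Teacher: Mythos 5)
Your proposal is correct and follows essentially the same strategy as the paper: decompose the vertices of $U$ not covered by long paths into the $U_1$-contribution from successful phases and the $U_1\cup U_2$-contribution from failed phases, control the former deterministically via the per-phase bound $|U_1|\le C^{-1}d$, and control the latter by showing the proportion of failed phases is small using Lemma~\ref{l: DFS} and a concentration argument. The bookkeeping differs slightly: you pair the crude lower bound $|U|\ge N$ with a larger failure tolerance $|F|\le C^{-5/6}N/(2d)$, while the paper pairs the sharper lower bound $|U|\gtrsim (N-|F|)C^{-1/12}d$ with the stronger requirement $|F|\le N/d^2$, but these compensate and yield the same conclusion. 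One small caution on the concentration step: an Azuma--Hoeffding bound applied once to the supermartingale $\sum_{j\le i}(Y_j-p_f)$ stopped at the random time $N$ does not by itself give $|F|\le C^{-5/6}N/(2d)$, because the target deviation threshold scales with the random $N$ (and can be sub-constant when $N$ is small). The clean fix is to pass to i.i.d.\ dominating Bernoulli$(p_f)$ indicators $\tilde{Y}_i\ge Y_i$ and run a union bound over every deterministic horizon $j\le|A|$ for the event $\sum_{i\le j}\tilde{Y}_i > C^{-5/6}j/(2d)$; each term is superpolynomially small in $d$ because $p_f=\exp(-d/C)$, and the sum over $j$ is $o(1/d)$. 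This is morally what the paper's conditioning-on-$N$-then-Markov display is doing as well, so you are at the same level of rigor as the paper; I only flag it so that, were this written out in full, you do not appeal to the stopped-martingale phrasing literally.
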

\begin{proof}
Call each phase of DFS-Aux \textit{successful} if it produces a path of size at least $C^{-1/12}d$, that is, if at the end of the phase $|U_2|\geq C^{-1/12}d$. By \cref{l: DFS}, for any outcome of the previous phases, each phase is successful with probability at least $1-\exp(-d/C)$. Thus, the probability that more than $d^{-2}$ proportion of the phases are \textit{not} successful is at most
\begin{align}
\sum_{k\geq 1} 
\mathbb{P}(\text{DFS-Aux runs for exactly $k$ phases})\cdot \mathbb{P}(\mathrm{Bin}(k,\exp(-d/C))\geq k/d^2).    \label{eq: no-success}
\end{align}
By Markov's inequality,
\begin{align*}
    \mathbb{P}(\mathrm{Bin}(k,\exp(-d/C))\geq k/d^2)\le d^2\exp(-d/C)=o(1/d),
\end{align*}
and thus \eqref{eq: no-success} is at most
\begin{align*}
    o(1/d)\cdot \sum_{k\geq 1} 
\mathbb{P}(\text{DFS-Aux runs for exactly $k$ phases})= o(1/d)\cdot 1=o(1/d).
\end{align*}

If a phase is successful, then the vertices moved from $Z$ to $U$ in this phase which are not covered by a path of length at least $C^{-1/12}d$ in $\cU$ are those in $U_1$, and otherwise it is all vertices in $U_1 \cup U_2$. Due to the stopping condition $|U_1|+|U_2|\leq (C^{-1} +C^{-1/12})d$, in each successful phase $|U_1|\leq C^{-1}d$. 
Thus, denoting by $k$ the number of phases in the algorithm run, the proportion of vertices of $U$ that are not spanned by the paths in $\cU$ of length at least $C^{-1/12}d$ is at most
\[  
\frac{(k/d^2)(C^{-1} +C^{-1/12})d + k C^{-1}d}{\left(1-1/d^2\right) k C^{-1/12}d } = \frac{(C^{-1} +C^{-1/12})/d^2 +  C^{-1}}{\left(1-1/d^2\right)  C^{-1/12} } \leq 2C^{-11/12}.\qedhere
\]
\end{proof}

Recall that each vertex $v \in A$ corresponds to a segment in $L_{i,i+1}[V_1]$, and that each path in $\Aux \cup M$ corresponds to a way to merge some sequence of paths in $\cP_1(i)$ into a longer path covering most of the vertices in the original paths. 
Since the majority of the vertices in $L_{i,i+1}$ are contained in $V_1$, and the majority of vertices $v \in U$ are covered by long paths in $\cU$ constructed by DFS-$\Aux$, it remains to show that only a vanishingly small proportion of the vertices in $A$ are deleted during the clean up subroutines and added to $W$.

\begin{lemma}\label{l: peeling}
When \textnormal{DFS-Aux} ends, with probability $1-o(1/d)$, we have $|W\cap A|\le C^{-1/300}|A|$.
\end{lemma}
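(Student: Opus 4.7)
The plan is to bound $|W\cap A|$ via a double-counting argument on the edges incident to $W\cap A$, combined with per-phase cascade control across the cleanup subroutines of DFS-Aux.

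For any $u\in W\cap A$ peeled during the cleanup of some phase, the defining property of the cleanup forces $\deg_{\Aux\setminus(U\cup W)}(u)\le C^{-1/14}d$ at that moment. Since $\deg_\Aux(u)\ge C^{1/10}d$, at least $(C^{1/10}-C^{-1/14})d\ge C^{1/10}d/2$ of $u$'s $B$-neighbors must already lie in $U\cup W$. Summing over $u\in W\cap A$, the total number of saturating incidences is at least $|W\cap A|\cdot C^{1/10}d/2$, which one then needs to match against an upper bound on the same quantity.

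The crude estimate using $\deg_\Aux(v)\le d$ for $v\in B$ yields only $|W\cap A|\le 2|B|/C^{1/10}$, which is too weak since $|B|/|A|$ may be of order $C^{37/240}$. To strengthen the estimate, I would partition $W\cap A$ by the phase in which each vertex is peeled. At the start of phase $t$, the graph $\Aux\setminus(U_{t-1}\cup W_{t-1})$ has minimum degree strictly above $C^{-1/14}d$ (otherwise the previous cleanup would not have terminated); during the preceding DFS phase, at most $(C^{-1}+C^{-1/12})d$ vertices are added to $U$, so the new cascade is triggered solely by these fresh additions. Iterating a bipartite expansion inequality of the form ``any $T\subseteq A$ satisfies $|N_\Aux(T)|\ge (C^{1/10}d/\Delta_B)|T|$'', the cascade size in phase $t$ is bounded by $O(|\Delta U_t|/\alpha)$ for an appropriate expansion parameter $\alpha$; summing over the $O((|A|+|B|)/(C^{-1/12}d))$ phases then yields the desired global bound.

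The main obstacle is securing a sufficiently strong value of $\alpha$, which is tantamount to controlling $\Delta_B=\max_{v\in B}\deg_\Aux(v)$. The trivial bound $\Delta_B\le d$ is far too weak; however, since the segments in $\cS_1(i)$ together cover only a $\Theta(C^{-1/24})$-fraction of $L_{i+1}[V_1]$, the typical value of $\deg_\Aux(v)$ is $\Theta(C^{-1/24}d)$. Obtaining a tail bound on $\Delta_B$ will require a concentration argument leveraging the randomness inherent in the choice of matchings $M_1,M_2$ in the proof of \cref{lem:paths_in_M}, possibly with a small exceptional set of high-degree $B$-vertices handled separately through the trivial bound. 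Once this degree control is in place, combining it with the cascade bound and the per-phase cap on $|\Delta U_t|$ yields $|W\cap A|\le C^{-1/300}|A|$, with the failure probability $o(1/d)$ absorbed into the exceptional events for $M_1,M_2$ and the Chernoff tails governing the growth of $U$.
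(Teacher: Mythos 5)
Your starting point is the same as the paper's: double-count the edges of $\Aux$ incident to $W\cap A$, using $\deg_{\Aux}(u)\ge C^{1/10}d$ for $u\in A$ and the cleanup threshold $C^{-1/14}d$ to get a lower bound of order $C^{1/10}d\,|W\cap A|$ on the number of such edges whose $B$-endpoint already lies in $U\cup W$ at the moment of peeling. You also correctly identify that the crude $\deg_\Aux(v)\le d$ bound alone is too weak because $|B|/|A|$ can be $\Theta(C^{37/240})$. The divergence is entirely in the upper bound, and that is where the proposal breaks down.

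The paper's key device, which your proposal does not use, is to partition these edges by the \emph{status} of the $B$-endpoint at the moment the $A$-endpoint was peeled (yielding the sets $F_U$, $F_{W_B}$, $F_Z$) and bound each part with a different tool. In particular, $F_{W_B}$ picks up a $C^{-1/14}$ factor from the cleanup rule applied to the $B$-endpoint (which entered $W$ earlier, at a time when the $A$-endpoint was still in $Z$), so the $|B|$-scaling is tamed by $C^{-1/14}$; while $F_U$ uses $\deg_\Aux(v)\le d$ but only for a controlled number of $B$-vertices -- at most $2|A|$ lying on paths in $\cU$, plus at most $2C^{-11/12}(|A|+|B|)$ in $U_1$-remnants via Lemma~\ref{l: DFS2}. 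You instead propose (i) a per-phase cascade bound of $O(|\Delta U_t|/\alpha)$ with $\alpha\asymp C^{1/10}d/\Delta_B$, and (ii) a concentration bound on $\Delta_B=\max_{v\in B}\deg_{\Aux}(v)$.

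Both steps have genuine gaps. For (i): after the cleanup of phase $t$, the guarantee is only that the minimum degree in $\Aux\setminus(U\cup W)$ strictly exceeds $C^{-1/14}d$; a vertex can therefore be sitting one edge above the threshold, having accumulated losses over many earlier phases, so the cascade in phase $t+1$ is \emph{not} controlled by the fresh losses $|\Delta U_t|$ alone. The gap $C^{1/10}d - C^{-1/14}d$ controls only the \emph{cumulative} number of lost edges per peeled $A$-vertex, which is exactly the global double-count the paper uses; it does not give a per-phase "amortised" cascade bound of the form you state. For (ii): the matchings $M_1,M_2$ in the proof of Lemma~\ref{lem:paths_in_M} are obtained deterministically from K\"onig's theorem applied to $H_p$, so there is no auxiliary randomness "inherent in the choice of $M_1,M_2$" to concentrate over, and no tail bound on $\Delta_B$ is available along these lines. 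Moreover, even granting concentration, the quantitative requirement fails: your bound needs $\Delta_B \lesssim C^{1/10}d\cdot|A|/(C^{1/300}(|A|+|B|))$, i.e.\ $\Delta_B\lesssim C^{-0.058}d$, while the typical value you yourself estimate is $\Theta(C^{-1/24}d)=\Theta(C^{-0.042}d)$, which is already too large. The argument therefore cannot close as written; the missing idea is the tripartite decomposition of the saturating edges and, crucially, the $C^{-1/14}$ gain on the $F_{W_B}$ part coming from the cleanup rule applied on the $B$-side.
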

\begin{proof}
We abbreviate $W_A=W\cap A$ and $W_B=W\cap B$. For $X\in \{U, W_B, Z\}$, let $F_X$ be the set of edges in $\Aux$ incident to $W_A$ with the following property: at the moment when their endpoint in $A$ was added to $W$, their other endpoint was in $X$. By definition, the sets $F_U, F_{W_B}$ and $F_Z$ are pairwise disjoint. 

On the one hand, since $d_{\Aux}(v)\ge C^{1/10}d$ for every $v\in A$, there are at least $C^{1/10}d|W_A|$ edges between $W_A$ and $B$, and so 
\begin{equation}\label{eq:LBWA}
 |F_U| + |F_{W_B}| +|F_Z| \ge C^{1/10}d|W_A|.
\end{equation}
On the other hand, by the definition of $W$ and of the clean up subroutine, every time a vertex enters $W$ it has at most $C^{-1/14}d$ neighbours in $Z$. Thus,
\begin{align*}
|F_Z|\le C^{-1/14} d |W_A| \le C^{-1/14} d |A|. 
\end{align*} 
Similarly, for every edge $uv\in F_{W_B}$ with $u \in W_A$ and $v \in B$, we must have that $v$ entered $W$ before $u$ did, and so there was a point where $u \in Z$ and $v$ entered $W$. Hence, since at this point $v$ had at most $C^{-1/14}d$ neighbours in $Z$, it follows that
\begin{align*}
|F_{W_B}|\leq C^{-1/14}d|W_B|\le C^{-1/14}d|B|.
\end{align*} 
Finally let $uv\in F_U$ with $u\in W_A$ and $v\in B$. Thus, there is some phase in which $v$ moved to $U_2$, and during this phase $u$ was in $Z$. 

First, suppose that during this phase $v$ does not move from $U_2$ to $U_1$. Then $v$ belongs to some set in $\mathcal U$. Note that, for each set $P\in \mathcal U$, since $P$ spans a path in $\Aux_p$, at least a third of the vertices of $P$ lie in $A$.
In particular, in total, there are at most $2|A|$ vertices $v\in B\cap U$ belonging to some set in $\cU$. Thus, the number of edges in $F_U$ for which one of its endpoints belongs to some set in $\cU$ is at most 
$2|A| (\max_{v\in B} d_{\Aux}(v))\leq 2d|A|$.

Suppose that $v$ moves to $U_1$ during this phase. By \cref{l: DFS2}, with probability $1-o(1/d)$, there are at most $2C^{-11/12}|U| \leq 2C^{-11/12}(|A|+|B|)$ such vertices. Hence, with the same probability, there are at most 
\[2C^{-11/12}(|A|+|B|) (\max_{v\in B} d_{\Aux}(v))\leq 2C^{-11/12}d(|A|+|B|)\]
edges in $F_U$ containing a vertex in $B$ which entered $U_1$ at some point of the algorithm.

All in all, using \eqref{eq: A vs B}, we obtain that, with probability $1-o(1/d)$, 
\begin{align}\label{eq:barF}
|F_U| + |F_{W_B}| +|F_Z| &\le 2C^{-11/12}d(|A|+|B|) +  2d|A| +
C^{-1/14} d |B| +  C^{-1/14}d|A| \nonumber
\\&\le  3C^{-1/14}d|B|+3d|A| 
\leq 10C^{1/6-1/80-1/14}d|A|.
\end{align}

By combining~\eqref{eq:LBWA} and~\eqref{eq:barF}, we obtain that
\[C^{1/10}d|W_A|\le |F_U| + |F_{W_B}| +|F_Z| \le 10C^{1/6-1/80-1/14}d|A|.\]
Since $1/6-1/80 -1/14 - 1/10 < -1/300$, the desired conclusion follows.
\end{proof}

We are ready to prove Proposition~\ref{prop: long paths}.

\begin{proof}[Proof of Proposition~\ref{prop: long paths}] 
We condition on a successful outcome of Lemma~\ref{lem:paths_in_M}, and recall the resulting family of paths $\cP_1(i)$. Note that, in the proof of Lemma~\ref{lem:paths_in_M}, we only exposed the edges of $Q^d_p[L_{i,i+1}[V_1]]$. We construct the graph $\Aux$ as in the previous section, run the DFS-$\Aux$ algorithm and condition on a successful outcome of Lemmas~\ref{l: DFS2} and~\ref{l: peeling}. 
Note that these three events happen simultaneously with probability $1-o(1/d)$.

Let $\cU(i)$ be the family of paths guaranteed by \cref{l: DFS2}, and let $\cU'(i) \subseteq \cU(i)$ be the family of paths of length at least $C^{-1/12}d$. 
We refer to the paths in $\cU(i) \setminus \cU'(i)$ as \emph{short} paths. Moreover, let us call a path $R \in \cU'(i)$ \emph{bad} if $R$ contains at least $C^{-1/700}|R|$ or more vertices in $A$ whose partner in $M$ is not in $R$. We call the remaining paths \emph{good}.

Note that, by construction, for every edge in $\Aux$ between $v \in L_i[V_2]$ and $P^* \in \cS_1(i)$ there is at least one edge in $Q^d$ between $v$ and some vertex $u \in P^*$. Further, each edge of $\Aux$ is retained in $\Aux_p$ independently and with probability $p$. Hence, there is a natural coupling of $\Aux_p$ and $Q^d_p$ such that the edge $(v,P^*)$ is in $\Aux_p$ whenever $uv \in Q^d_p$. Hence, from each path $R \in \cU(i)$, we can construct a path $P(R)$ in $Q^d_p$ which contains at least a $(1-2C^{-1/8})$-proportion of the vertices in each path $P \in \cP_1(i)$ such that the edge $\{V_P^+,V_P^-\}$ of $M$ is contained in $R$ (see Figure~\ref{fig: DFS merge}). Furthermore, note that the paths constructed in this manner for different $R \in \cU(i)$ are disjoint.
\begin{figure}
\centering
\includegraphics[width=0.55\textwidth]{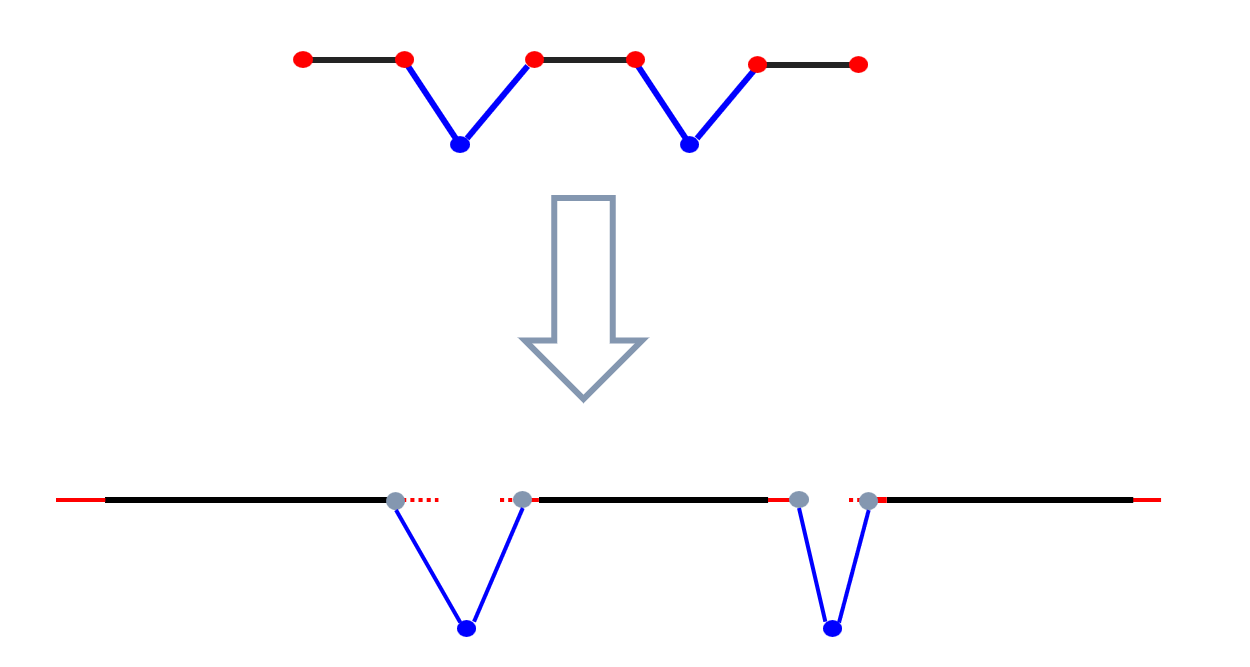}
\caption{In the upper part of the figure, we have a path found in DFS-Aux. The red circles are vertices in $A$, and the blue circles are vertices in $B$. The black edges correspond to edges in $M$. In the lower part of the figure, we have the corresponding path in $Q^d_p$. The paths appear in black, with their initial and terminal segments, which correspond to vertices of $A$, in red (and slightly thinner). The vertices in $B\subseteq V_2$ are in blue.} \label{fig: DFS merge}
\end{figure}

Now, by the stopping condition of DFS-Aux that $|U_1|+|U_2|\leq (C^{-1}+C^{-1/12})d$ 
and the bounds on the lengths of the paths in $\cP_1(i)$ from Lemma~\ref{lem:paths_in_M}, every path $R\in \cU(i)$ results in a path $P(R)$ of length at most 
\begin{equation}\label{eq:UBPR}
|P(R)|\le (C^{-1}+C^{-1/12})d\cdot 2C^{1/6} \le 4C^{1/12}d.
\end{equation}

On the other hand, for a good path $R\in \cU'(i)$, we have that at least a third of its edges lie in $M$. In particular,
\begin{align}\label{eq:LBPR}
|P(R)| \geq \frac{C^{-1/12}d}{3} \cdot (1-2C^{-1/8})\cdot C^{1/6} \geq \frac{1}{4}C^{1/12}d. 
\end{align}

Thus, all that is left is to bound the number of vertices in $L_{i,i+1}$ not covered by the family $\cP_2(i)\coloneqq \{P(R)\colon R\in \cU'(i), R\text{ is good}\}$. Such a vertex $v$ in $Q^d$ must satisfy one of the following.
\begin{enumerate}[(\arabic*){}]
   \item $v\notin \bigcup_{P\in \cP_1(i)}P$. Otherwise, let $P$ be the path that $v$ belongs to. \label{i: not in P1}
    \item $v$ is in $P^-$ or $P^+$ (and thus perhaps lost during some merging process).\label{i: in a segment}
    \item At least one of the segments $P^-$ and $P^+$ corresponds to some $S\in A$ in a short path.\label{i: in a short path}
    \item The segments $P^-,P^+$ correspond to some $S,S'\in A$ which are consecutive in a bad path $R\in \cU'(i)$.\label{i: bad path}
    \item The segments $P^-,P^+$ correspond to some $S,S'\in A$ which are not consecutive vertices on any path in $\cU'(i)$.\label{i: broken M}
\end{enumerate}

Note that if $v$ does not satisfy any of the above, then it must be covered by some $P(R)\in \cP_2(i)$. Let us estimate the number of vertices of each kind separately.

By \cref{def:spread partition} and Lemma~\ref{lem:paths_in_M}, there are at most $C^{-1/90}|L_{i,i+1}|$ vertices that are in $Q^d[L_{i,i+1}]$ but not in $\bigcup_{P\in \cP_1(i)}P$, that is, vertices of type~\ref{i: not in P1}. By construction, there are at most $\frac{C^{1/8}}{C^{1/6}}|L_{i,i+1}|=C^{-1/24}|L_{i,i+1}|$ of type~\ref{i: in a segment} in $Q^d[L_{i,i+1}]$. By Lemma~\ref{l: DFS2}, there are at most $2C^{-11/12}|U|$ vertices in short paths in our auxiliary graph. Thus, there are at most $2C^{1/6}\cdot 2C^{-11/12}|U|\le C^{-2/3}|L_{i,i+1}|$ vertices of type \ref{i: in a short path} in $Q^d[L_{i,i+1}]$. 
Moreover, by combining \Cref{l: peeling}, the fact that all paths in $\cP_1(i)$ have lengths in the interval $[C^{1/6},2C^{1/6}]$,~\eqref{eq:UBPR} and~\eqref{eq:LBPR}, the number of vertices of type~\ref{i: bad path} is bounded from above by $C^{-1/700} |L_{i,i+1}[V_1]|\le 2C^{-1/700} |L_{i,i+1}|$.
Finally, by similar considerations, the proportion of vertices of type \ref{i: broken M} is bounded from above by $2C^{-1/300}\cdot \frac{2C^{1/6}}{C^{1/6}} |L_{i,i+1}| \le C^{-1/400} |L_{i,i+1}|$.

Altogether, there are at most $C^{-1/800}|L_{i,i+1}|$ vertices of types \ref{i: not in P1}-\ref{i: broken M}, and the conclusion of the lemma holds.
\end{proof}

\section{Merging paths via growing trees}\label{sec:part 2}

We begin with a rough description of the proof strategy in this section. The aim of this section is to merge the paths in $ \cP_2 = \{ \cP_2(i) \colon i \in [m_3,m_4]\}$ (constructed in Proposition \ref{prop: long paths}) in two steps, each following a similar strategy --- we will grow expanding trees down, layer by layer, from some initial and terminal segments of each path in $\cP_2$. When a vertex enters two or more distinct trees of distinct paths, it is used to merge two distinct paths. 

In order to keep track of the merged path families and trees at each stage in this process, we introduce the following key definition.

\begin{definition}\label{def: PES}
Let $\cP$ and $\cS$ be families of vertex-disjoint paths in $Q^d_p$, where we call the elements of $\cS$ \emph{segments}, and let $\cF$ be a family of vertex-disjoint trees. We say the triple $(\cP,\cS,\cF)$ is a \emph{path-extension~forest} (PEF for short) if the following properties hold.
\begin{itemize}
    \item For every $P \in \cP$ there are two paths $P^+,P^-$ in $\cS$ which consist of the subpaths spanned by the first, resp. last, $C^{1/13}d$ vertices in $P$. Conversely, for every $S \in \cS$, there is a $P \in \cP$ and $* \in \{+,-\}$ such that $S = P^*$. We say that $S$ is a \emph{segment} of $P$.
    \item For every $S \in \cS$, there is a tree $T_S$ in $\cF$ which contains $S$. We say that $T_S$ \emph{extends} $S$. Conversely, for each $T \in \cF$, there are a unique path $P \in \cP$ and a segment $S \in \cS$ of $P$ such that $T \cap V(P) = S$.
\end{itemize}
\end{definition}

For a PEF $(\cP, \cS,\cF)$, we denote by $\textrm{Int}(\cP, \cS, \cF)$ the union over $P\in \cP$ of the set of vertices on the subpath of $P$ that connects $P^+$ and $P^-$, that is, $\textrm{Int}(\cP, \cS, \cF)=V(\cP)\setminus V(\cS)$. For readability, given $S = P^*$, we will write $T^*_P$ for $T_S$ and we will follow a similar convention for other notation in which a segment $P^*$ appears as a subscript.

\vspace{1em}

The aim of this section is to prove the following result.

\begin{proposition}\label{prop:part 2}
\Whp there exists a PEF $(\cP_3,\cS_3,\cF_3)$ in $Q^d_p$ satisfying all of the following properties.
\begin{enumerate}[\upshape{\textbf{A\arabic*}}]
    \item\label{item:C1} $V(\cP_3) \cup V(\cS_3) \cup V(\cF_3) \subseteq L_{m_1, m_2}[Q_0] 
    \cup L_{m_2+1,m_4+1}$.
    \item\label{item:C2} $|\cP_3|\le 6$.
    \item\label{item:C3} $|\textnormal{Int}(\cP_3,\cS_3,\cF_3)|\ge (1-C^{-1/850})2^d$. 
    \item\label{item:C4} For every $S\in \cS_3$, there is a set $W_S$ of $d^{20}$ leaves of $T_S$ in $L_{m_1}[Q_0]$ such that $J_S \coloneqq \mathbb{T}(W_S)$ has size at most $2m_1 = 100 \log d$. 
    In addition, the sets $W_S$ can be chosen so that $\{J_S: S \in \cS_3\}$  are pairwise disjoint. 
\end{enumerate}
\end{proposition}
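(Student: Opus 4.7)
The plan is to build $(\cP_3,\cS_3,\cF_3)$ by two successive runs of the Merge-Or-Grow (MOG) subroutine sketched in \cref{sec:outline}, starting from the family $\cP_2 = \bigcup_{i} \cP_2(i)$ produced by \cref{prop: long paths}. For each $P\in \cP_2$, designate its initial and terminal $C^{1/13}d$-subpaths as its initial segments $P^\pm$ and initialise the trees $T^\pm_P$ to these segments. In MOG, in each round every active tree attempts to grow one layer deeper using the available percolated edges; whenever two trees from distinct paths meet at a common vertex, that vertex is used to concatenate the two paths, the two trees are dissolved, and fresh segments are installed at the new endpoints.

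Stage 1, carried out in \cref{sec:2(a)}, runs MOG through layers $L_{m_2,m_4}$ using only vertices of the reservoir $V_3$. The well-spreadness of $(V_1,V_2,V_3)$ combined with $pd=C$ ensures that each tree expands by a factor close to $pdq_3/2 \gg 1$ per layer, so iterating for $m_4-m_2 = \Theta(d)$ layers shows that any tree surviving down to $L_{m_2}[Q_0]$ accumulates a super-polynomial-in-$d$ number of leaves whose supports are also well-controlled. This will be the content of \cref{prop:2(a)}, and it yields an intermediate PEF $(\cP',\cS',\cF')$ whose trees reach $L_{m_2}[Q_0]$ while the number of paths is already driven down by all the mergers triggered during the process.

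Stage 2, carried out in \cref{sec:2(b)}, continues MOG downwards through $L_{m_1,m_2}[Q_0]$. The innovation here is to couple the tree-growth below each leaf $r \in L_{m_2}[Q_0]$ with a search for a monotone path from $r$ down to $L_{m_1}[Q_0]$ inside the subcube $Q[\bz;r]$ under mixed percolation: by \cref{lem: monotone paths}, each root succeeds with probability at least $(m_2-m_1)^{-5} = d^{-\Theta(1)}$. Independence across the super-polynomially many available roots, together with Chernoff-type concentration, then yields at least $d^{20}$ surviving monotone paths per segment \whp. The support conditions in \ref{item:C4} are engineered by pre-assigning, inside $[d]\setminus\ind(r)$, pairwise disjoint coordinate blocks of size roughly $2m_1$ to the (at this stage only $O(1)$) active segments, and constraining each monotone-path search to use coordinates only from its own block; this forces $|J_S|\le 2m_1 = 100\log d$ and makes the $J_S$ pairwise disjoint. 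Every collision between the remaining trees triggers a merge, which by a simple bookkeeping over the possible merge configurations drives the number of surviving paths to at most $6$.

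The main obstacle will be the simultaneous balancing of the three competing constraints in \ref{item:C4}: at least $d^{20}$ leaves per segment, total support of size at most $2m_1$, and pairwise disjoint $J_S$. Since \cref{lem: monotone paths} only delivers a $d^{-\Theta(1)}$ per-root success probability and coordinate blocks may not overlap, Stage 2 must begin with substantially more than $d^{20}$ roots per segment, which dictates the quantitative bounds needed from Stage 1. Once these are in place, \ref{item:C3} follows since the only vertex-losses beyond those already absorbed by \cref{prop: long paths} come from at most $6$ leftover terminal segments of length $O(d)$ and from tree-vertices drawn from $V_3 \cup Q_0$, both negligible compared with $|L_{m_3,m_4}| = (1-o(1))\,2^d$, while \ref{item:C1} and \ref{item:C2} are immediate from the construction.
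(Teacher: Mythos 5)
Your high-level plan matches the paper's — two stages of MOG, the first through $L_{m_2,m_4}$ using $V_3$ to grow the trees super-polynomially large, the second through $L_{m_1,m_2}[Q_0]$ invoking \cref{lem: monotone paths} per subcube with $d^{-\Theta(1)}$ success probability and Chernoff concentration over super-polynomially many roots. But there are two genuine gaps where you've substituted vague language for the actual mechanisms.

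First, your claim that ``a simple bookkeeping over the possible merge configurations drives the number of surviving paths to at most 6'' is not a proof of \ref{item:C2}; it is the hardest point of the argument and you've skipped it. The paper's mechanism is \emph{geometric}: Stage~1 endows every surviving segment $S$ with a coordinate set $I_S\subseteq[d]$ of size $d/6$ contained in the joint support $\ind(M_S)$ of its leaves. If seven paths survived, inclusion–exclusion over seven sets of size $d/6$ in $[d]$ forces some pair $I_S, I_{S'}$ to intersect in $\ge d/10^3$ coordinates, and the content of \cref{prop:2(b)} is that two segments with that much shared support \emph{cannot} both survive Stage~2 as distinct mergeable segments — their AGP-grown trees would be forced to collide in $L_{m_1}$ more than $d^3$ times, more than the pruning set $K$ can account for under the coupling of \cref{lem:domination}, so one of them must have been absorbed in a merge. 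Without the coordinate-set bookkeeping (large $I_S$ built during Stage~1, large pairwise intersection forcing a merge), there is simply no reason the process stops at six paths; MOG alone does not guarantee this.

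Second, your scheme for \ref{item:C4} — ``pre-assigning pairwise disjoint coordinate blocks of size roughly $2m_1$ to the (at this stage only $O(1)$) active segments and constraining each monotone-path search to use coordinates only from its own block'' — is both circular and dimensionally broken. You don't know which segments will survive Stage 2 until after you run it, so you cannot pre-assign blocks to ``the active segments''; and a block of size $2m_1 = 100\log d$ cannot support a monotone path traversing $m_2-m_1 = \Theta(d)$ layers. The paper avoids both problems: Stage~1 gives each segment a set $I_S$ of size $d/6$, the disjoint sets $J_S$ of size $2m_1$ are chosen from the $I_S$ \emph{after} the surviving segments are known (trivially possible since $12\cdot 2m_1 = o(d/6)$), and the monotone paths from $L_{m_2}$ to $L_{m_1}$ are built in \cref{lem:paths in AGP} in two stages through subcubes of dimension $\ge d/4$, with only the \emph{bottom endpoints} $z$ satisfying $\ind(z)\subseteq I'$; the paths themselves are free to use many more coordinates. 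You would need to reproduce this two-stage subcube decomposition via \cref{lem:firstsubcubes} to make the support bound and the $d^{20}$ leaf count coexist.
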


Let us first describe the strategy on a high level. We begin by exposing the random partition (given by $(V_1, V_2, V_3)$) of $L_{m_4,m_4+1}$, which will typically be well-spread. We then apply \Cref{prop: long paths} to $Q^d_p[L_{m_1,m_1-1}[V_1\cup V_2]$ and initialise the PEF by taking $\cP=\cP_2(m_4)$, and letting $\cS=\cF$ be the segments of $\cP$. We stress that at this point, for every $S\in \cS$, $\cF\ni T_S=S$. Then, we will iteratively grow our PEF by exposing sequentially the edges of $Q^d_p$ between three consecutive layers $L_{i,i+2}$, together with the random (and, again, typically well-spread) partition of $L_{i,i+1}$.

First, we use the vertices in $L_{i,i+1}[V_3]$ to grow the trees in $\cF$ down from their vertices in $L_{i+2}$ to vertices in $L_i$. 
We say two trees $T_S,T_{S'} \in \cF$ are \emph{mergeable} if $S$ and $S'$ are segments of distinct paths in $\cP$. In this case, we also say that the two segments $S$ and $S'$ are mergeable.

More concretely, we expose the edges (in $Q^d_p$) of each $u\in L_{i+1}[V_3]$ to $L_{i+2}$ in some arbitrary order, and then that of each $v\in L_{i}[V_3]$ to $L_{i+1}$ in some arbitrary order. If a vertex $v \in V_3$ is revealed to be adjacent to a pair of mergeable trees $T,T'$, then we choose such a pair uniformly at random and \emph{merge them through $v$}. 
More precisely, if $P \neq R \in \cP$ are paths with segments $S,S'$ whose corresponding trees $T_S,T_{S'}$ merge, we replace $P,R$ with a new path $P'$ containing $P\setminus S,R\setminus S'$ as well as the path in $T_S\cup T_{S'}$ connecting $S,S'$ via the common vertex $v$. 
The segments of $P'$ are then the two segments of $P,R$ different from $S,S'$, and these segments maintain their associated trees in $\cF$ (see Figure \ref{f: MOG merge}).
If $v$ is revealed to be adjacent to a single tree, or two trees which are not mergeable, we choose one of the trees at random and add $v$ as a leaf to that tree.

Inside $L_{i,i+1}[V_1 \cup V_2]$, we apply \cref{prop: long paths} to construct the family $\cP_2(i)$ of paths. Then, we add this family of paths to $\cP$ and, for each $P \in \cP_2(i)$, we add its initial and terminal segments to $\cS$ and also to $\cF$.

\begin{figure}
\centering
\includegraphics[width=0.65\textwidth]{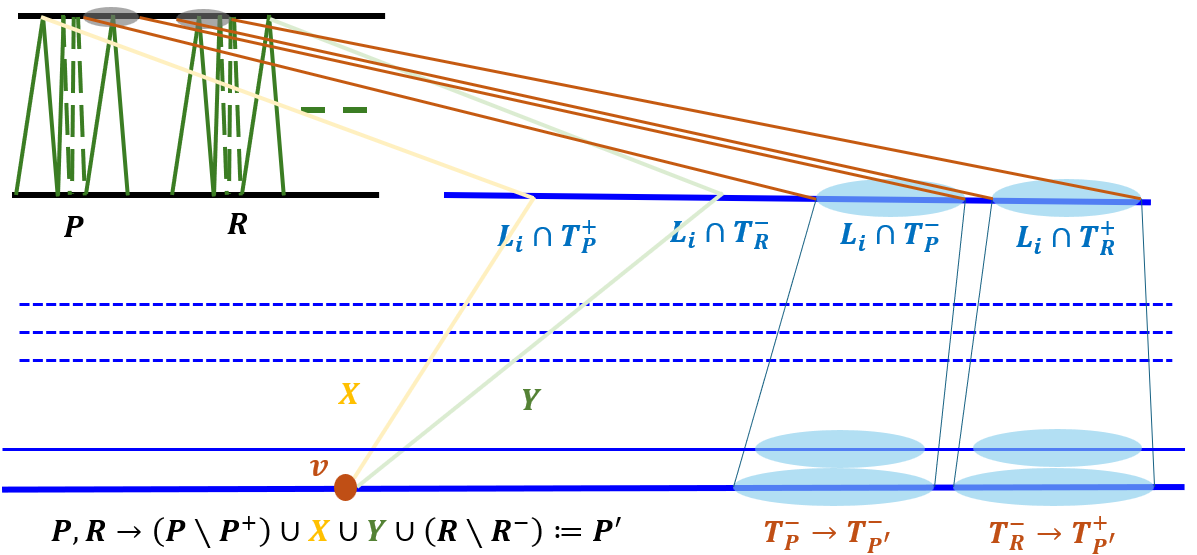}
\caption{Illustration of two paths, $P$ and $R$, merging through $v$ by using the path $X$ from the tree $T_{P}^+$ and the path $Y$ from the tree $T_{R}^-$. The new path is then $P'$, and we define $T_{P}^-,T_{R}^+$ to be its trees.} \label{f: MOG merge}
\end{figure}

Let us formally define then the algorithm described above, which we call the \emph{Merge-Or-Grow Algorithm} (MOG for short).

\paragraph{The Merge-Or-Grow Algorithm (MOG):} 

Set $(\cP^{m_4+2},\cS^{m_4+2},\cF^{m_4+2}) = (\varnothing,\varnothing,\varnothing)$. The Merge-Or-Grow Algorithm will run for a number of steps, one for each layer $L_i$ where $i \in [m_1,m_4]$ is even, in decreasing order. In the step corresponding to the $i$-th layer, we will have some PEF $(\cP^{i+2},\cS^{i+2},\cF^{i+2})$ which is contained in the layers $L_{i+2,d}$. We initialise $(\cP^{i+1},\cS^{i+1},\cF^{i+1})\coloneqq (\cP^{i+2},\cS^{i+2},\cF^{i+2})$. We note that the output of the algorithm will be a PEF $(\cP^i, \cS^i, \cF^i)$, and $(\cP^{i+1},\cS^{i+1},\cF^{i+1})$ serves as an intermediate step in the algorithm process.

We expose a random partition $(V_1,V_2,V_3)$ of $L_{i,i+1}$ by assigning each vertex to $V_j$ with probability $q_j$ independently, and we set $B_i \subseteq L_{i,i+1}$ to be
\begin{align*}
   B_i= \begin{cases}
         L_{i,i+1}[V_3] &\text{ if } i\in [m_2+1,m_4],
        \\  L_{i,i+1}[V_3\cap V(Q_0)] &\text{ if } i=m_2,
         \\ L_{i,i+1}[Q_0] &\text{ if } i\in [m_1,m_2-1].
    \end{cases}
\end{align*}
We now expose the edges of $Q^d_p[L_{i,i+2}]$ with at least one endpoint in $B_i$. We fix some arbitrary order of the vertices in $B_i$ where all vertices in layer $L_{i+1}$ appear before all vertices in layer $L_i$, and then we process the vertices in $B_i$ according to the following rules executed in the given order, updating the PEF $(\cP^{i+1},\cS^{i+1},\cF^{i+1})$ dynamically.
\begin{enumerate}[\upshape{\textbf{B\arabic*}}]
    \item\label{item:A1} Suppose $v\in B_i$ is adjacent in $Q^d_p$ to at least one pair of mergeable trees (that is, adjacent to mergeable segments or to leaves of mergeable trees). Then we select uniformly at random such a pair $T^{\bullet}_P$ and $T^{\circ}_R$ where $P \neq R \in \cP^{i+1}$ and $\bullet, \circ\in \{+,-\}$. We denote by $X$ the path in $Q^d_p$ from $v$ to $P\setminus P^{\bullet}$ which goes through $T^{\bullet}_P$, and by $Y$ the path in $Q^d_p$ from $v$ to $R\setminus R^{\circ}$ which goes through $T^{\circ}_R$. We delete the paths $P$ and $R$ from $\cP^{i+1}$, the segments $P^{\bullet}$ and $R^{\circ}$ from $\cS^{i+1}$ and the trees $T^{\bullet}_P$ and $T^{\circ}_R$ from $\cF^{i+1}$. We add a new path $P'\coloneqq (P\setminus P^{\bullet})\cup X\cup Y\cup(R\setminus R^{\circ})$ to $\cP^{i+1}$ (see Figure \ref{f: MOG merge}). The segments of $P'$ are taken to be $P^{\star}$ and $R^{\diamond}$ where $\star=\{+,-\}\setminus\bullet,\diamond=\{+,-\}\setminus\circ$, and they maintain their associated trees in $\cF^{i+1}$.
    \item\label{item:A2} If $v\in B_i$ is adjacent in $Q^d_p$ to exactly two trees in $\cF^{i+1}$ which are not mergeable, then we choose one of these trees $T$ uniformly at random (with probability $1/2$), and add $v$ to $T$ as a leaf.
    \item\label{item:A3} If $v$ is adjacent in $Q^d_p$ to exactly one tree $T$ in $\cF^{i+1}$, then we add $v$ to $T$ as a leaf.
\end{enumerate}
We note that, if $v$ is an isolated vertex, we naturally do nothing and move to the next vertex. Further, we note that if a tree has no neighbours in $B_i$ and was not used for a merge, we discard the tree and its associated path and segment; however, as we will soon see, \textbf{whp} this event never occurs. We will say that a vertex $v \in B_i$ is \emph{processed according to \ref{item:A1}-\ref{item:A3}} during the iteration of MOG on $L_{i,i+1}$ according to which one of the above rules is applied to $v$. It will sometimes be convenient to refer to the status of the PEF after the vertices in $B_i \cap L_{i+1}$ have been processed, but before the vertices in $B_i \cap L_i$ have been processed. We will refer to this as the point in time \emph{after the first layer has been processed} in the iteration of MOG on $L_{i,i+1}$.

Finally, if $i \geq m_3$, then we expose the edges of $Q^d_p[L_{i,i+1}[V_1 \cup V_2]]$ and, conditionally on the success of \cref{prop: long paths}, we add the paths in $\cP_2(i)$ to $\cP^{i+1}$, and add their initial and terminal segments of length $C^{1/13}d$ to both $\cS^{i+1}$ and $\cF^{i+1}$. Also, we set  $(\cP^{i},\cS^{i},\cF^{i})=(\cP^{i+1},\cS^{i+1},\cF^{i+1})$ to be the resulting PEF.

We note a few properties of MOG that will be useful in our later analysis.

Firstly, by \cref{r:wellspread}, for each even $i \in [m_3,m_4]$, the random partition of $L_{i,i+1}$ generated during the iteration of MOG in $L_{i,i+1}$ is well-spread with probability $1-o(1/d)$. Hence, by a union bound, \whp the application of \cref{prop: long paths} is successful for each even $i \in [m_3,m_4]$. 
In particular, after the first iteration of MOG in $L_{m_4,m_4+1}$, we have that, in the PEF $(\cP^{m_4},\cS^{m_4},\cF^{m_4})$, $\cP^{m_4}$ is the path family $\cP_2(m_4)$ generated by \cref{prop: long paths}.

Secondly, for even $i,j$ with $m_1 \leq i < j \leq m_4$ and a tree $T_j \in \cF^{j}$, we stress that exactly one of the following two scenarios occurs.
\begin{itemize}
    \item There exists an even $k$ with $i \leq k < j$ and a unique tree $T_k\in \cF^k$ such that $T_k$ belongs to some pair of trees which are merged (via \ref{item:A1}) during the $k$-th iteration of MOG, and $T_j$ is a subtree of $T_k$. In this case, we say that $T_j$ \emph{was utilised in a merge}.
    \item There exists a unique tree $T_{i}\in \cF^{i}$ such that $T_j$ is a subgraph of $T_{i}$ (i.e. $T_{i}$ is the extension of $T_j$ in $\cF_{i}$). In this case, we say $T_{i}$ \emph{extends} $T_j$. 
\end{itemize}

We will also require the following technical lemma which says that, if a vertex $v \in B_i$ was processed according to \ref{item:A1} during the iteration of MOG on $L_{i,i+1}$ and $v$ was revealed to be adjacent to some tree $T \in \cF^{i+2}$, then it is not \emph{too} unlikely that $T$ was one of the trees that was merged through $v$. 
Indeed, intuitively, a typical vertex $v$ is adjacent to at most $(1+o_C(1))C/2$ other vertices on the previous layer, and thus at most $(1+o_C(1))C/2$ trees in $\cF^{i+2}$. 
Excluding some pairs of edges incident to $v$ and connecting non-mergeable pairs of trees, each pair is equally likely to be merged. Hence, we should expect that a single tree is merged through $v$ with probability roughly at most $1/C$.

\begin{lemma}\label{lem:MOGmergeprob}
Fix even $i \in [m_1,m_4]$ and $v \in B_i$. Consider the point at which $v$ is processed during the iteration of MOG on $L_{i,i+1}$. 
Conditionally on the events that $v$ is used to merge two trees in the iteration of MOG on $L_{i,i+1}$ and that $v$ is revealed to be adjacent to $T \in \cF^{i+2}$, the probability that $v$ is used to merge $T$ to another tree is at least $1/(3C)$.
\end{lemma}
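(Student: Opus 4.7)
I plan to treat the random \ref{item:A1}-choice made at $v$ as the only remaining randomness, having conditioned on the full history $\mathcal{H}_v$ of the MOG algorithm up to (but not including) that choice. The history $\mathcal{H}_v$ determines the state of the PEF $(\cP^{i+1},\cS^{i+1},\cF^{i+1})$ at the moment $v$ is processed, hence determines the set $\mathcal{T}_v$ of trees in $\cF^{i+1}$ that are adjacent to $v$ in $Q^d_p$, and determines whether the (unique) tree $T^* \in \cF^{i+1}$ that still contains the vertex of $T$ revealed to be adjacent to $v$ exists. Since the trees of $\cF^{i+1}$ are vertex-disjoint, every tree in $\mathcal{T}_v$ consumes at least one distinct edge of $v$, so $|\mathcal{T}_v| \le |N_{Q^d_p}(v)|$.

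The first ingredient is a combinatorial lower bound on the \ref{item:A1}-probability. Conditionally on $\mathcal{H}_v$ with $T^* \in \mathcal{T}_v$ and $|\mathcal{T}_v| = k$, the number of mergeable pairs containing $T^*$ is at least $k-2$ (excluding $T^*$ itself and at most one further tree belonging to the same path as $T^*$), while the total number of mergeable pairs is at most $\binom{k}{2}$. Hence, given that \ref{item:A1} fires on $v$, the conditional probability that it selects a pair involving $T^*$ is at least
\[
\frac{k-2}{\binom{k}{2}} = \frac{2(k-2)}{k(k-1)},
\]
which exceeds $1/(2C)$ for $3 \le k \le 2C$ (and equals $1$ if $k=2$ and a mergeable pair exists).

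The second ingredient is a tail bound on $k$. Conditioning on the event $F$ that a specific edge $vw$ with $w \in T$ lies in $Q^d_p$, the remaining $d-1$ edges of $v$ are independent $\mathrm{Bernoulli}(p)$ variables, whence $|N_{Q^d_p}(v)| \le 2C$ with probability at least $1-e^{-\Omega(C)}$ by Chernoff. The additional conditioning on the event $E$ that \ref{item:A1} fires at $v$ inflates this failure probability only by the factor $\mathbb{P}(E\mid F)^{-1}$, which I expect to be bounded by a constant for $C$ large, since $v$ typically has several further edges into trees of $\cF^{i+1}$, at least one of which sits on a path distinct from $T^*$'s.

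Combining the two ingredients gives the target $(1-o_C(1))/(2C) \ge 1/(3C)$, provided the event $\{T^* \in \mathcal{T}_v\}$ has conditional probability at least $2/3$ given $E \cap F$. Under the natural reading of "$v$ is revealed to be adjacent to $T$" as referring to $v$'s adjacency to the extension of $T$ in $\cF^{i+1}$ at the moment of processing, this is automatic and the proof is complete. The main obstacle I anticipate is the alternative scenario where $T$ has already been consumed by an earlier merge during the same iteration on $L_{i,i+1}$; ruling this out would require a supplementary union bound over the previously-processed vertices in $B_i$ that could have merged $T$, leveraging the fact that the expected number of such vertices is $o(1)$ times a power of $C$ and that each contributes only a small probability of actually merging $T^*$ in its own \ref{item:A1}-step.
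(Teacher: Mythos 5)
Your overall blueprint matches the paper's: both arguments combine (i) a combinatorial lower bound of the form $\ell/\binom{\ell+O(1)}{2}$ on the conditional chance that the random pair chosen in \ref{item:A1} contains $T$, valid as long as the number of trees adjacent to $v$ is at most $2C$, with (ii) a Chernoff-type tail bound showing that the number of tree-neighbours of $v$ is at most $2C$ with probability $1-e^{-\Omega(C)}$. Your reading of the conditioning --- that adjacency to $T$ should be understood as adjacency to the extension of $T$ alive in $\cF^{i+1}$ when $v$ is processed, so that the ``already-consumed'' scenario is excluded --- is the intended one and is consistent with how the lemma is used inside Lemma~\ref{lem:generalstep}, so your last paragraph is worrying about a non-issue.

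The genuine gap is in your step (ii). You bound $\mathbb{P}(k>2C\mid E,F)\le \mathbb{P}(k>2C\mid F)/\mathbb{P}(E\mid F)$ and then assert that $\mathbb{P}(E\mid F)^{-1}$ is bounded by a constant. That assertion is false uniformly over histories: if $v$ has only $O(1)$ potential $Q^d$-edges into living trees other than via $w$, then $\mathbb{P}(E\mid F)=O(p)=O(C/d)\to 0$, so your inflation factor blows up with $d$. (In those particular histories $k$ is deterministically small so the final conclusion is safe, but your argument does not make this case split, and for intermediate ranges of the number of potential tree-neighbours the factor $\mathbb{P}(E\mid F)^{-1}$ is neither bounded nor is $k$ trivially small.) The paper avoids the multiplicative inflation entirely by arguing a direct stochastic domination: with $T'$ the sibling of $T$ and $\ell=|\mathcal{R}\setminus\{T,T'\}|$, the conditioning forces $\ell\ge 1$ and, for any choice of the ``forced'' witness tree, the remaining tree-adjacencies behave like $\mathrm{Bin}(d,p)$, so $\ell$ is stochastically dominated by $\mathrm{Bin}(d,p)+1$ and the tail bound is uniform. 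To repair your proof you would need to replace the $\mathbb{P}(E\mid F)^{-1}$ step by an argument of this flavour (condition on a specific forced edge and dominate the rest), rather than relying on a constant lower bound for $\mathbb{P}(E\mid F)$ which does not hold.
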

\begin{proof}
Let $\mathcal{R}$ be the set of trees in $\cF^{i+2}$ which $v$ is revealed to be adjacent to in $Q^d_p$ during MOG, where $T \in \mathcal{R}$ by assumption. Let $T'$ be the unique tree in $\cF^{i+2}$ not mergeable with $T$ and let us define $\ell \coloneqq | \mathcal{R} \setminus \{T,T'\}|$. Note that, under the conditioning in the lemma, $\ell$ is stochastically dominated by $\mathrm{Bin}(d,p)+1$. (Indeed, while the affect of the conditioning that $\ell\ge1$ implicitly depends on the tree $\hat T\in(\mathcal{R}\setminus\{T,T'\})$ which we condition $v$ to be adjacent to, the probability that $\ell \le 2C$ is at least $1-\exp(-C/4)$ for \textit{any} choice of a tree $\hat T$ for which we condition).

In particular, by a standard Chernoff-type bound, the probability that $\ell\le 2C$ is at least $1-\exp(-C^2/4C)=1-\exp(-C/4)$. On the other hand, conditioned on $\ell$ there are at most $\binom{\ell+2}{2}$ mergeable pairs of trees adjacent to $v$, and at least $\ell$ of these pairs contain $T$. Hence, under the conditioning in the lemma, the probability that $v$ is used to merge $T$ with another tree is at least
\begin{align}\label{eq: prob ordinary}
    \mathbb{P}\left(\mathrm{\ell\le 2C}\right)\cdot\min_{\ell\le 2C}\frac{\ell}{\binom{\ell+2}{2}}\ge \frac{1}{3C}.
\end{align}
\end{proof}

Rather than proving Proposition \ref{prop:part 2} directly, we will first analyse an intermediary step in the process and consider the PEF $(\cP^{m_2},\cS^{m_2},\cF^{m_2})$ after the iteration of MOG on $L_{m_2,m_2+1}$. Note that the paths in $\cP^{m_2}$ lie in $L_{m_2,m_4+1}$ by construction. We will show that \textbf{whp} each of the trees in $\cF^{m_2}$ has a large set of leaves in $L_{m_2}[Q_0]$ with large joint support --- the latter property will be useful in our analysis of MOG on layers $L_{m_1,m_2}$, where the degree to the next layer decreases significantly.

\begin{proposition}\label{prop:2(a)}
Let $(\cP^{m_2},\cS^{m_2},\cF^{m_2})$ be the PEF obtained by running MOG until after the iteration on $L_{m_2,m_2+1}$. Then, \whp the following hold.
\begin{itemize}
\item $|\textnormal{Int}(\cP^{m_2},\cS^{m_2},\cF^{m_2})|\geq (1-C^{-1/850})2^d$.
\item For every $S \in \cS^{m_2}$, there exist a set of leaves $M_S \subseteq V(T_S)\cap L_{m_2}[Q_0]$ of size $d^{C^{3/4}}$ and a set $I_S \subseteq \ind(M_S)$ of size $|I_S| = d/6$.
\end{itemize}
\end{proposition}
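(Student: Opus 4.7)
For the first bullet, I would start by observing that $|\textnormal{Int}|$ is monotone non-decreasing during the execution of MOG: merges via rule~\ref{item:A1} add the paths $X,Y$ through $V_3$ together with the formerly-excluded segments $P^{\bullet},R^{\circ}$ to the new path's interior, while the insertion of each $\cP_2(i)$ only adds new interior. It therefore suffices to sum contributions at insertion time. For each even $i \in [m_3,m_4]$, Proposition~\ref{prop: long paths} provides paths covering at least $(1-C^{-1/800})|L_{i,i+1}|$ vertices of $L_{i,i+1}$, and subtracting the two segments of length $C^{1/13}d$ per path removes at most $O(C^{-1/156})|L_{i,i+1}|$ vertices (using the lower bound $C^{1/12}d/4$ on path length). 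Summing over even $i$ and using $|L_{m_3,m_4}| = (1-o(1))2^d$ yields $|\textnormal{Int}| \geq (1-O(C^{-1/156}))2^d \geq (1-C^{-1/850})2^d$, since $C^{-1/156} \ll C^{-1/850}$ for large $C$.

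For the second bullet, my plan is to localise the entire argument inside a carefully chosen subcube of dimension $\Theta(d)$ contained in $Q_0$, so that membership in $L_{m_2}[Q_0]$ and the joint-support condition $|I_S|=d/6$ both come for free. Fix $S \in \cS^{m_2}$ originating from iteration $j \in [m_3,m_4]$. The construction of $\cP_2(j)$ in Proposition~\ref{prop: long paths} does not distinguish $Q_0$ from $Q_1$, so a short concentration argument would show that \whp every segment contains a positive fraction of its $L_j$-vertices in $Q_0$; fix any $v_0 \in S \cap L_j \cap Q_0$. Select any $I_S \subseteq \ind(v_0)$ of size $d/6$ (the specific choice is irrelevant to this proposition), and let $u_S^{\ast}$ satisfy $\ind(u_S^{\ast}) = I_S$. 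Since $1 \notin \ind(v_0) \supseteq I_S$, the subcube $Q[u_S^{\ast};v_0]$ is entirely contained in $Q_0$, has dimension $D \coloneqq j-d/6 = \Theta(d)$, and every vertex in it has support containing $I_S$.

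It then remains to show that $T_S$ contains at least $d^{C^{3/4}}$ leaves at $L_{m_2}$ inside this subcube. For a single such leaf, I would apply Lemma~\ref{lem: monotone paths} in the subcube with $\rho=p$, $q=q_3$, and $\alpha=pD \geq C/3 - o(1) > \e$ (noting $q_3 = C^{-1/80} > \e/\alpha$ for $C$ large): with probability at least $D^{-5}$, one obtains a monotone path from $v_0$ to $u_S^{\ast}$ through $V_3$, which necessarily passes through $L_{m_2}$ at exactly one vertex. To amplify to $d^{C^{3/4}}$ leaves I would use a two-stage argument. First, grow the tree from $v_0$ by BFS inside the subcube through $V_3$ for $k = \Theta\bigl(C^{3/4}\log d/\log C\bigr)$ layers; the per-layer mean branching $\lambda \coloneqq (d/3)p q_3 = \Omega(C^{79/80})$ is large, so a supercritical branching / Azuma-type concentration on the layered exploration should give \whp at least $\lambda^k \geq d^{C^{3/4}+5}$ vertices at layer $L_{j-k}$. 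Second, from each such intermediate vertex $w$, apply Lemma~\ref{lem: monotone paths} to the sub-subcube $Q[u_S^{\ast};w]$ of dimension $\Theta(d)$, each yielding a leaf at $L_{m_2}$ with probability at least $d^{-5}$.

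The hardest part will be the concentration of the second stage: the Lemma-driven events from different $w$ are not independent (they share many edges within the subcube and the common terminal $u_S^{\ast}$), and the single-application bound of $D^{-5}$ is only polynomial, so a second-moment computation adapting the proof of Lemma~\ref{lem: monotone paths} (combined with positive correlation via FKG-type inequalities) will be needed to conclude that \whp at least $d^{C^{3/4}}$ distinct leaves are produced. A further subtle point will be verifying that the reached vertices actually belong to $T_S$: rule~\ref{item:A2} could assign a reachable vertex to the partner tree $T_{S'}$ of the other segment of the same path, while an adjacency to a mergeable tree in rule~\ref{item:A1} would contradict $T_S \in \cF^{m_2}$. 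Since the two trees of a single path grow from disjoint segments in essentially disjoint regions of the cube, typical reachable vertices meet only $T_S$ and a union bound over the polynomially many segments in $\cS^{m_2}$ should handle the rare exceptions, completing the argument.
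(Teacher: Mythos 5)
Your proof of the first bullet is essentially correct and matches what the paper implicitly does (monotonicity of $\textnormal{Int}$, accounting for the coverage from Proposition~\ref{prop: long paths} minus the segment overhead).

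For the second bullet, however, your route has a genuine gap at its core. You analyse a hypothetical BFS/monotone-path exploration inside a subcube of $(Q_0)_p(q_3)$ and hope to conclude that a large set of reached vertices lies in $T_S$, but you never establish this inclusion. The tree $T_S$ is built by MOG, which at each vertex makes a random choice among rules \ref{item:A1}--\ref{item:A3}: a vertex adjacent to the growing tree may be used to merge a different pair of trees, or may be assigned (under \ref{item:A2}) to the partner tree $T_{S'}$ of the same path, and therefore may not join $T_S$ at all. Your last paragraph acknowledges this, but "a union bound over the polynomially many segments should handle the rare exceptions" is not a proof --- and in fact it is exactly where the whole difficulty lives. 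The paper handles it with a dichotomy (Lemma~\ref{lem:generalstep}): if $T_S$ has at least $7Cd$ newly-revealed $V_3$-neighbours at a layer, then with probability $1 - o(2^{-d}/d)$ either $T_S$ is used in a merge (in which case $S \notin \cS^{m_2}$ and we are done) or a constant fraction of those neighbours are attached to $T_S$. This in turn leans on Lemma~\ref{lem:MOGmergeprob}, which says that each \ref{item:A1}-processed neighbour merges $T_S$ with probability at least $1/(3C)$, so a tree that is never merged cannot have too many such neighbours. Your approach has no analogue of this dichotomy, and the claim that "an adjacency to a mergeable tree in rule~\ref{item:A1} would contradict $T_S \in \cF^{m_2}$" is simply false: such an adjacency merges \emph{some} pair of trees, not necessarily the pair containing $T_S$, and when $T_S$ is not the one merged, the vertex is lost to $T_S$.

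Two further points worth flagging. First, choosing $I_S \subseteq \ind(v_0)$ from a single vertex $v_0 \in S$ is much cruder than the paper's construction in Lemma~\ref{lem:step 1}, which breaks the segment into $C^{1/10+1/13}$ sub-segments, pigeonholes to find one with many $V_3$-neighbours, and derives $I_S$ from the joint support of a \emph{whole} sub-segment, so that the lower bound on the tree size is seeded by $\Omega(C^{7/8}d)$ vertices rather than a single root; your single-root approach would require showing that the sub-tree descending from $v_0$ alone is already large, which is a strictly stronger requirement than what MOG guarantees. Second, the proposed amplification via Lemma~\ref{lem: monotone paths} plus an FKG/second-moment argument is unnecessary (and likely not the right tool) once the dichotomy is in place: the paper's inductive step shows the set of suitable leaves grows by a factor of roughly $C^{1/4}$ per iteration of MOG, using only Kruskal--Katona (Theorem~\ref{thm:LKK}, Lemma~\ref{l:binomtech}) together with Chernoff and Lemma~\ref{lem:generalstep}, reaching $d^{C^{3/4}+5}$ in $\Theta(C^{3/4}\log d/\log C)$ layers and then stabilising; the monotone-path machinery is reserved for the layers below $m_2$ in the proof of Proposition~\ref{prop:2(b)}, where the explicit coupling of Lemma~\ref{lem:domination} --- precisely the ingredient your argument is missing --- justifies replacing MOG by a mixed-percolation reachability analysis.
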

Note that, when $i < m_2$, no new paths are added in the iteration of MOG on $L_{i,i+1}$. 
In particular, the sets of segments $\cS^{i}$ form a non-increasing nested sequence as $i\in [m_1,m_2]$ decreases.

We then analyse the remaining iterations of MOG until after the iteration on $L_{m_1,m_1+1}$ and show that typically, during this period, for each segment $S\in \cS^{m_2}$, either the corresponding tree $T_S \in \cF^{i}$ is used to merge two paths at some point, 
or there are many disjoint paths within the corresponding tree $T_S \in \cF^{m_1}$ ending in layer $L_{m_1}$. Furthermore, we will show that, for segments $S$ and $S' \in \cS^{m_2}$ whose corresponding coordinate sets $I_S$ and $I_{S'}$ from \Cref{prop:2(a)} have a `large' intersection, it is very unlikely that both segments `survive' and remain in $\cS^{m_1}$, unless they end up as the two segments corresponding to some path $P \in \cP^{m_1}$.

\begin{proposition}\label{prop:2(b)}
Let $(\cP^{m_1},\cS^{m_1},\cF^{m_1})$ be the PEF obtained by running MOG until after the iteration on $L_{m_1,m_1+1}$, and define the sets $I_S$ for $S\in \cS^{m_2}$ as in \Cref{prop:2(a)}. Then, \whp\hspace{-0.5mm}, for every distinct $S,S'\in \cS^{m_2}$, if $|I_S \cap I_{S'}| \geq d/10^3$, then exactly one of the following holds.
\begin{enumerate}[\em (a)]
    \item \label{i:usedmerge} At least one of the segments $S,S'$ is not in $\cS^{m_1}$, that is, at least one of the trees $T_S,T_{S'}\in \cF^{m_2}$ was utilised in a merge during some iteration of MOG.
    \
    \item \label{i:samepath} There exists a path $P \in \cP^{m_1}$ with segments $P^+ =S$ and $P^-=S'$.
\end{enumerate}
In addition, \whp\hspace{-1.5mm}, for every $S \in \cS^{m_1} \subseteq \cS^{m_2}$ with a corresponding tree $T_S \in \cF^{m_1}$, and every subset $I \subseteq I_S$ of size $100 \log d = 2m_1$, there is a set $Z_I \subseteq V(T_S) \cap L_{m_1}$ such that $|Z_I| = d^{20}$ and $\mathbb{T}(Z_I) \subseteq I$.
\end{proposition}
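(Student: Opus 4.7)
The plan is to control MOG on the layers $L_{m_1,m_2}[Q_0]$ via a coupling with a mixed percolation process, to which \Cref{lem: monotone paths} applies. Note first that (a) and (b) are automatically mutually exclusive, since (b) requires both $S$ and $S'$ to lie in $\cS^{m_1}$, while (a) asserts that at least one of them does not. Thus, it suffices to show that at least one of them holds.

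The heart of the argument is the following per-target estimate. Fix $S \in \cS^{m_2}$ and a vertex $v \in L_{m_1}$ with $\ind(v) \subseteq I_S$. Choose any $u \in M_S$, so that $\ind(v) \subseteq I_S \subseteq \ind(u)$, and consider the subcube $Q[v;u]$ of dimension $D \coloneqq m_2 - m_1 = (1/2-o(1))d$ (which lies in $Q_0$ since $u \in L_{m_2}[Q_0]$). The idea is to expose MOG's execution on $Q[v;u]$ layer by layer and declare an internal vertex $w$ to be \emph{$T_S$-available} if MOG assigns $w$ to the descendant of $T_S$ within this subcube. A direct analysis of rules \ref{item:A1}--\ref{item:A3}, combined with \Cref{lem:MOGmergeprob} and Chernoff concentration on the number of trees adjacent to $w$ (typically $(1+o(1))C/2$), will show that, conditionally on the MOG state revealed so far, $w$ is $T_S$-available with probability at least some $q_0 = q_0(C)$ satisfying $q_0 > \e/\alpha$ with $\alpha = pD = (1/2-o(1))C$. \Cref{lem: monotone paths} then yields, via a coupling with the mixed percolation $Q^D_p(q_0)$ on the subcube $Q[v;u]$, that $v$ becomes a leaf of $T_S$ in $\cF^{m_1}$ (via this $u$) with probability at least $D^{-5}$. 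The most delicate point will be managing the conditional dependencies in the MOG state when exposing availability layer by layer; this will be handled by revealing only those edges and partition choices that lie within, or just above, the current subcube, leaving enough fresh randomness outside to guarantee the marginal $q_0$ bound approximately independently across vertices $w$.

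For the second statement, fix $S \in \cS^{m_1}$ and $I \subseteq I_S$ with $|I| = 2m_1$. The set $V_I \coloneqq \{v \in L_{m_1} \colon \ind(v) \subseteq I\}$ has size $\binom{2m_1}{m_1} = d^{\Omega(\log d)}$. By the per-target estimate, the expected number of leaves of $T_S$ in $V_I$ is at least $|V_I| D^{-5} \gg d^{20}$. To obtain a concentration statement, I apply \Cref{lem:firstsubcubes} to assign distinct $v \in V_I$ to distinct $u = u(v) \in M_S$ (which is possible since $|M_S| = d^{C^{3/4}} \gg |V_I|$) in such a way that the subcubes $Q[v;u(v)]$ are pairwise vertex-disjoint. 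Then the per-target events become (nearly) independent and a Chernoff bound gives $|Z_I| \ge d^{20}$ with failure probability $\exp(-d^{\omega(1)})$. A union bound over the at most $2^{(1+o(1))d}$ choices of $(S,I)$ yields the second statement.

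For the first statement, suppose for contradiction that $|I_S \cap I_{S'}| \geq d/10^3$, yet the trees $T_S, T_{S'}$ persist in $\cF^{m_1}$ as segments of distinct paths throughout MOG. Then they are mergeable at every moment. For each common target $v \in L_{m_1}$ with $\ind(v) \subseteq I_S \cap I_{S'}$, I apply the coupling simultaneously to $T_S$ and $T_{S'}$ with disjoint choices of $u \in M_S$ and $u' \in M_{S'}$, using \Cref{lem:firstsubcubes} to ensure the two subcubes are vertex-disjoint. With probability at least $D^{-10}$, independently across $v$, both trees reach $v$; whenever this happens, there must be a prior moment of MOG at which some common vertex is adjacent in $Q^d_p$ to both trees, and rule \ref{item:A1} merges them, contradicting the assumption. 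Since there are $\binom{d/10^3}{m_1}$ candidate targets, the failure probability is at most $(1-D^{-10})^{\binom{d/10^3}{m_1}} = \exp(-d^{\omega(1)})$, which comfortably absorbs the union bound over all $O(|\cS^{m_2}|^2) = 2^{O(d)}$ pairs $(S,S')$. The main obstacle throughout will be establishing the approximate independence of the per-vertex $T_S$-availability events in the coupling, which is what makes \Cref{lem: monotone paths} applicable with the correct parameters.
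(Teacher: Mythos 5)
The proposal is in the same spirit as the paper (coupling MOG with a mixed-percolated hypercube and applying \cref{lem: monotone paths} in a family of disjoint subcubes supplied by \cref{lem:firstsubcubes}), but there is a genuine gap at the heart of your ``per-target estimate,'' and it is precisely the point the paper's proof is built around.

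You claim that a vertex $w$ in the subcube, conditionally on being adjacent to the current extension of $T_S$, is appended to $T_S$ with probability at least some $q_0(C) > \e/\alpha \approx 2\e/C$. This is not correct unconditionally, and your plan for establishing it does not address the real obstruction: rule \ref{item:A1}. If $w$ is adjacent to at least two other trees (and such trees come from at least two distinct paths, which is forced whenever $w$ is adjacent to three or more trees in total), then $w$ is processed according to \ref{item:A1} and is used in a merge --- it is \emph{never} added as a leaf of $T_S$. Since the trees in MOG cover most of the vertices in the adjacent upper layer, and $w$ typically has $\Theta(C)$ revealed edges into that layer, the number $\ell$ of other trees adjacent to $w$ is $\Theta(C)$ typically, and $\mathbb{P}(\ell\le 1)$ is roughly $\exp(-\Theta(C))$. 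Hence the per-vertex probability you need is not $\Omega(1/C)$ but is instead exponentially small in $C$, and the hypothesis $q_0>\e/\alpha$ of \cref{lem: monotone paths} fails. Moreover, conditioning on $T_S,T_{S'}$ \emph{not} being merged drastically reshapes the joint distribution of all the \ref{item:A1}/\ref{item:A2}/\ref{item:A3} decisions, so the independence structure required to compare with $Q^D_p(q_0)$ no longer holds; ``revealing only edges within or just above the current subcube'' does not fix this, because whether $w$ triggers \ref{item:A1} depends on $w$'s adjacencies to arbitrary other trees outside your subcube.

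The paper's resolution (\cref{lem:domination}) is exactly the idea missing from your sketch. It defines AGP on $(Q_0)_p(1/2)$ \emph{relative to a pruning set} $K$: the vertices adjacent to the growing extensions of $T_S,T_{S'}$ and processed by \ref{item:A1} are removed, the coin-flips in \ref{item:A2} are identified with the $1/2$-vertex-percolation, and one only proves $\agp(T_S,K)\subseteq \hat T_S$. Crucially, the conditioning you implicitly need --- that $T_S,T_{S'}$ were never merged --- is \emph{leveraged}: by \cref{lem:MOGmergeprob}, each vertex in $K$ independently merges $T_S$ (or $T_{S'}$) with probability at least $1/(3C)$, so if $|K|\ge d^3$, a merge occurs with probability $1-o(4^{-d})$; hence under the conditioning, $|K|\le d^3$ with probability $1-o(4^{-d})$. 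The quantitative slack $2d^{20}\gg d^3$ (from \cref{lem:paths in AGP}) then absorbs all the pruned vertices. Without isolating $K$ and showing it is small \emph{because} a merge would otherwise have happened, the per-target bound $q_0>\e/\alpha$ cannot be recovered, and your Chernoff/union bound at the end would rest on an incorrect premise.
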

We note that the set $I_S$ in the above proposition will be the same as in Proposition~\ref{prop:2(a)}.

The proofs of Propositions~\ref{prop:2(a)} and~\ref{prop:2(b)} may be found in Sections~\ref{sec:2(a)} and~\ref{sec:2(b)}, respectively. We now deduce Proposition~\ref{prop:part 2} from Propositions~\ref{prop:2(a)} and~\ref{prop:2(b)}.

\begin{proof}[\textbf{Proof of Proposition~\ref{prop:part 2} assuming Propositions~\ref{prop:2(a)} and~\ref{prop:2(b)}.}]
In what follows, we 
condition on the events given by Propositions~\ref{prop:2(a)} and~\ref{prop:2(b)}, which occur \whp\hspace{-1.5mm}. We note that $|\interior(\cP^{m_2},\cS^{m_2},\cF^{m_2})|\ge (1-C^{-1/850})2^d$. 
Since the interior of the PEFs $(\cP^i,\cS^i,\cF^i)$ is non-decreasing by construction, it follows that \ref{item:C3} holds. Furthermore, \ref{item:C1} follows from the definition of the set $B_i$ in the description of MOG, as the PEF $(\cP^{m_2},\cS^{m_2},\cF^{m_2})$ is contained in $L_{m_2,m_4+1}$ and, for every even $i \in [m_1,m_2]$, $B_i \subseteq V(Q_0)$.

We now proceed to prove \ref{item:C2}. Assume that $|\cP^{m_1} |\geq 7$ and let $P_1,\ldots,P_7\in \cP^{m_1}$. Let $S_1,\ldots,S_7\in \cS^{m_1} \subseteq \cS^{m_2}$ be such that $S_i$ is the segment $P_i^+$ and let $I_{S_1}, \ldots, I_{S_7} \subseteq [d]$ be the sets of coordinates given by \cref{prop:2(a)}. Then, among these seven sets, there are two sets whose intersection is of size at least $d/10^3$. Indeed, by inclusion-exclusion,
\begin{align*}
    d\ge \left|\bigcup_{i=1}^{7}I_{S_i}\right|\ge \sum_{i=1}^7|I_{S_i}|-\sum_{i<j}|I_i\cap I_j|=\frac{7d}{6}-\sum_{i<j}|I_i\cap I_j|.
\end{align*}
Thus, there is some pair $(i,j)$ for which $|I_i\cap I_j|\ge\frac{d/6}{\binom{7}{2}}\ge \frac{d}{10^3}$. Since $S_i$ and $S_j$ satisfy neither \cref{prop:2(b)}\ref{i:usedmerge} nor \cref{prop:2(b)}\ref{i:samepath}, this contradicts  \cref{prop:2(b)}. Hence, $|\cP^{m_1} |\le 6$ and \ref{item:C2} holds.

Finally, we prove that \whp \ref{item:C4} holds. Let $\cS^{m_1}=\{S_1,...,S_{2s}\}$ for some $s\leq 6$, and let $I_{S_1},\ldots, I_{S_{2s}}$ be as in \cref{prop:2(a)}, where each $I_{S_j}$ has size $d/6$. 
For $k \leq 2s$, we define recursively $J_{k}$ to be an arbitrary subset of $I_{S_k} \setminus (\bigcup_{l=1}^{k-1} J_{l})$ or size $2m_1$, which exists since $m_1 = o(d)$. For each $k \leq 2s$, we let $W_{S_k}$ be the set $Z_{J_k}$ guaranteed by \cref{prop:2(b)}. Then, each $W_{S_k}$ has size $d^{20}$, $\mathbb{T}(W_{S_k}) \subseteq J_k$ has size at most $2m_1$, and the disjointness of the set of indices $\mathbb{T}(W_{S_k})$ follows from the disjointness of the $J_{k}$, as desired.
\end{proof}

\subsection{\texorpdfstring{Growing trees down to layer $L_{m_2}$}{}}\label{sec:2(a)}

This section is dedicated to the proof of \Cref{prop:2(a)}. Roughly speaking, we will show that \textbf{whp}, at each iteration of MOG, every tree in $\cF$ will either be used to merge two paths in $\cP$, or its set of leaves will grow at least by a factor of (roughly) $C^{1/4}$ until it reaches size $d^{C^{3/4}}$, and maintain this (or larger) size thereafter.

When a path first enters the PEF, we will analyse the first exploration downwards in $V_3$ from its corresponding trees slightly differently compared to the subsequent steps, as this step is crucial for finding an (eventually) suitable choice for the coordinate set in \ref{item:C4} of Proposition~\ref{prop:part 2}. 

The next lemma will be used to control how a tree grows in a general iteration of MOG (and will also be of use when analysing the first `merge-or-grow' step).

\begin{lemma}\label{lem:generalstep}
Fix even $i \in [m_2,m_4]$ and the PEF $(\cP^{i+2},\cS^{i+2},\cF^{i+2})$ obtained by running MOG until after the iteration on $L_{i+2,i+3}$. 
Further, fix a tree $T \in \cF^{i+2}$, reveal the set $N_{Q^d_p}(T) \cap B_i \cap L_{i+1}$, and assume $K\subseteq N_{Q^d_p}(T) \cap B_i \cap L_{i+1}$ satisfies $|K|\ge 7Cd$. Then, with probability $1-o(2^{-d}/d)$, one of the following holds.
\begin{itemize}
    \item $T$ is used for a merge during the iteration of MOG on $L_{i,i+1}$.
    \item At least $|K|/5$ of the vertices in $K$ are attached to $T$ (in particular by processing these vertices according to \ref{item:A2}-\ref{item:A3}).
\end{itemize}

Similarly, if $T \in \cF^{i+1}$ after the first layer has been processed and $K \subseteq N_{Q^d_p}(T) \cap B_i \cap L_{i}$ has size at least $7Cd$, then, with probability $1-o(2^{-d}/d)$, either $T$ is used for a merge during the iteration of MOG on $L_{i,i+1}$, or at least $|K|/5$ of the vertices in $K$ are attached to $T$.
\end{lemma}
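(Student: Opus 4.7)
The plan is to expose all edges of $Q^d_p$ from $K$ to $L_{i+2}$, so that for each $v \in K$ the number $Y^0_v$ of trees in $\cF^{i+2} \setminus \{T\}$ adjacent to $v$ in $Q^d_p$ becomes known, and then split on the size of $K_0 := \{v \in K : Y^0_v = 0\}$. In the easy case $|K_0| \geq |K|/5$, I would observe that during the iteration of MOG on $L_{i,i+1}$ the trees in $\cF$ only acquire new leaves in $L_{i+1}$ and $L_i$ or disappear entirely via merges; in particular, a vertex $v \in K_0$ never sees a new neighbour in a tree different from $T$, so whenever $v$ is processed while $T$ is still alive, $v$ is adjacent to $T$ only and is attached via rule \ref{item:A3}. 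Thus either $T$ is used in a merge at some earlier step (the first conclusion) or every $v \in K_0$ ends up attached to $T$, giving $\geq |K|/5$ attachments (the second conclusion).

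In the remaining case $|K_+| := |K \setminus K_0| \geq 4|K|/5 \geq 5Cd$, the plan is to show that $T$ is used in a merge with probability $1 - o(2^{-d}/d)$. The main ingredient is \cref{lem:MOGmergeprob}: whenever a vertex $v \in K_+$ triggers rule \ref{item:A1} while $T$ is still alive, $T$ is part of the randomly chosen merged pair with probability at least $1/(3C)$. Processing the vertices of $K_+$ in MOG order, I would argue that, except on an event of probability $o(2^{-d}/d)$, at least $|K_+|/2$ of them trigger \ref{item:A1} while $T$ is alive. A vertex $v \in K_+$ can fail this only in one of two scenarios: either all of its other initially adjacent trees have already been consumed by earlier merges (so $Y_v = 0$ at processing and rule \ref{item:A3} attaches $v$ to $T$), or its unique other initially adjacent tree is the sibling segment's tree of $T$ (so the only pair at $v$ is non-mergeable and rule \ref{item:A2} fires). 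Only a single tree, namely $T$'s sibling, can cause the second type of failure, so a standard Chernoff bound controls the number of $v \in K_+$ adjacent solely to this sibling outside $T$, and the first type is self-limiting since each earlier merge consumes exactly two trees. Combining these estimates gives
\[
\mathbb{P}(T \text{ is never used in a merge}) \leq \Big(1 - \tfrac{1}{3C}\Big)^{|K_+|/2} \leq \exp\!\Big(-\tfrac{5d}{6}\Big) = o(2^{-d}/d),
\]
using $5/6 > \ln 2$.

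The main obstacle I expect is this bookkeeping in the second case: quantifying precisely how many $v \in K_+$ retain a mergeable configuration at their processing time after earlier merges, which did not involve $T$, have possibly consumed some of their neighbouring trees. I would resolve it via a dichotomy on the total number of merges during the iteration: if at least $|K_+|/6$ merges occur, then each independently gives $T$ probability $\geq 1/(3C)$ of being chosen by \cref{lem:MOGmergeprob}, so $\mathbb{P}(T \text{ not merged}) \leq (1 - 1/(3C))^{|K_+|/6}$ is already small enough; otherwise, at most $|K_+|/3$ trees have been consumed, which is negligible compared to the typical number of neighbouring trees of $v \in K_+$, so most such $v$ still have an alive mergeable neighbour besides $T$'s sibling and do trigger \ref{item:A1}. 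The second statement, with $K \subseteq L_i$ revealed after the first sub-layer has been processed, follows from the identical argument with $L_{i+2}$ replaced by $L_{i+1}$; the trees in $\cF^{i+1}$ may now include leaves attached during the processing of the first sub-layer, but these extra leaves only increase the relevant degrees and pose no new difficulty.
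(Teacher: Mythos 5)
Your approach diverges from the paper's at the point of the case split, and the second case has a genuine gap. The paper splits \emph{a posteriori} on the random number of vertices in $K$ processed according to \ref{item:A1}: if at least $|K|/2$, Lemma~\ref{lem:MOGmergeprob} applies directly; otherwise at least $|K|/2$ vertices are processed via \ref{item:A2}/\ref{item:A3}, and each of those attaches to $T$ with probability at least $1/2$ (assuming $T$ is still alive, since otherwise the first conclusion already holds), so a Chernoff bound on $\mathrm{Bin}(|K|/2,1/2)$ finishes. You instead split \emph{a priori} on the initial adjacency structure into $K_0$ and $K_+$.

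Your Case~1 ($|K_0|\ge |K|/5$) is correct and clean. But Case~2 does not work as written. The goal of "show $T$ is merged \whp{}" can simply be false: for instance, every $v\in K_+$ might have $T$'s sibling as its \emph{unique} other initially adjacent tree. Then no $v\in K_+$ ever triggers \ref{item:A1}, $T$ need not be merged through $K$, and the correct conclusion is the second one (roughly $|K_+|/2$ vertices attach via \ref{item:A2}). The claimed "standard Chernoff bound" on the number of sibling-only vertices in $K_+$ is not available, because once you have revealed the edges from $K$ to $L_{i+2}$, this count is deterministic, not random. The dichotomy on the number of merges also fails quantitatively and logically: $(1-1/(3C))^{|K_+|/6}\le\exp(-5d/18)$ is \emph{not} $o(2^{-d}/d)$ since $5/18<\ln 2$, and Lemma~\ref{lem:MOGmergeprob} only gives a $1/(3C)$ chance for merges at vertices that are adjacent to $T$, whereas "merges during the iteration" can occur elsewhere. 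The paper's post-hoc split on the processing rule avoids all of this bookkeeping, because it tracks exactly the dichotomy that matters at processing time --- merge opportunity versus attachment opportunity --- rather than the static picture of who was initially adjacent to whom.
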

\begin{proof}
Note that, by assumption, every vertex in $K$ is processed according to one of \ref{item:A1}-\ref{item:A3}.
We split the analysis into two cases. 

Firstly, suppose that at least $|K|/2$ of the vertices in $K$ are processed according to \ref{item:A1}. Then, by \cref{lem:MOGmergeprob}, the probability that $T$ is not used for a merge during this iteration of MOG is at most
\[
\left(1-(3C)^{-1}\right)^{|K|/2}\le \exp\left(-{|K|/(6C)}\right)\le \exp\left(-7d/6\right) =o(2^{-d}/d),
\]
where we used the fact that the event that $T$ was not merged through $v$ is independent for different $v\in K$.

Conversely, if at least $|K|/2$ of the vertices in $K$ were processed according to one of \ref{item:A2} or \ref{item:A3}, then the number of vertices in $K$ attached to $T$ by MOG stochastically dominates $\mathrm{Bin}(|K|/2,1/2)$. A standard Chernoff bound shows that this number is at least $|K|/5$ with probability $1-o(2^{-d}/d)$. 
The second part of the statement is proved analogously.
\end{proof}

With \cref{lem:generalstep} in hand, we analyse how the trees in $\cF$ grow during the first iteration of MOG in which they are processed.

\begin{lemma}\label{lem:step 1}
Fix even $i\in [m_3,m_4]$ and the family of paths $\cP_2(i+2)$ added to the PEF during the iteration of MOG on $L_{i+2,i+3}$. For any $P \in \cP_2(i+2)$ and for any segment $S \in \cS^{i+2}$ of $P$, with probability $1-o(2^{-d}/d)$, one of the following holds.

\begin{enumerate}[\em (a)]
    \item\label{i:merge} $T_S$ is used for a merge at the iteration of MOG on $L_{i,i+1}$.
    \item\label{i:grow} Else, let $T_S \in \cF^{i}$ be the tree corresponding to $S$. Then, there exist a set $I_S \subseteq \{2,\ldots,d\}$ of size $d/6$ and a set $M_S \subseteq V(T_S) \cap L_i$ of size $|M_S| \geq C^{7/4}d$ such that $I_S \subseteq \ind(M_S)$.
\end{enumerate}
\end{lemma}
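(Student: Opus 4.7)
The plan is to distinguish two cases depending on whether $T_S$ is used for a merge during the iteration of MOG on $L_{i,i+1}$. In the first case, item \ref{i:merge} holds directly, so I focus on the second case and aim to verify item \ref{i:grow}.

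First, I lower-bound $|T_S\cap L_i|$ by applying \cref{lem:generalstep} to the two sub-steps of MOG separately. Initially $T_S=S$ with $|S\cap L_{i+2}|=\Theta(C^{1/13}d)$, and each vertex of $S$ lies in $V_1\setminus V_{bad}$ by the construction in \cref{prop: long paths} (which invokes \cref{lem:paths_in_M}). The well-spread property of the partition of $L_{i+2,i+3}$, combined with a Chernoff bound on the number of percolated edges from $S\cap L_{i+2}$ to $L_{i+1}[V_3]$, gives $|N_{Q^d_p}(T_S)\cap B_i\cap L_{i+1}|=\Theta(C^{1+1/13-1/80}d)$ with probability $1-o(2^{-d}/d)$, which exceeds $7Cd$ for $C$ large. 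An application of \cref{lem:generalstep} (using that $T_S$ is not merged) then yields $|T_S\cap L_{i+1}|=\Omega(C^{1+1/13-1/80}d)$ after the first sub-step; iterating the same argument to the second sub-step produces $|T_S\cap L_i|=\Omega(C^{2+1/13-2/80}d)$.

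To construct $I_S$, I fix a sub-segment $S'\subseteq S\cap L_{i+2}$ with $|S'|=\lfloor d/6\rfloor$. The sub-path of $S$ that spans $S'$ has at most $2|S'|-1$ edges, each toggling a single coordinate, so the joint support $I':=\ind(S')$ satisfies $|I'|\geq (i+2)-(2|S'|-1)\geq d/6+2$. Hence $I'\setminus\{1\}$ contains at least $d/6$ coordinates, and I take $I_S\subseteq I'\setminus\{1\}$ to be any subset of size exactly $d/6$. By construction, $I_S\subseteq\ind(u)$ for every $u\in S'$.

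To exhibit $M_S$, I focus on grandchildren of $S'$ inside $T_S\cap L_i$. Every $w\in T_S\cap L_i$ has a unique tree-ancestor $u(w)\in S\cap L_{i+2}$ and a pair $\{c,c'\}\subseteq\ind(u(w))$ of removed coordinates satisfying $\ind(w)=\ind(u(w))\setminus\{c,c'\}$. The condition $I_S\subseteq\ind(w)$ then amounts to $u(w)\in S'$ together with $\{c,c'\}\cap I_S=\varnothing$, i.e.\ both removed coordinates lying in $\ind(u(w))\setminus I_S$, a set of size $d/3+2$. A first-moment computation combined with Chernoff concentration shows that with probability $1-o(2^{-d}/d)$, the number of percolated potential grandchildren of $S'$ in $L_i[V_3]$ meeting this constraint is $\Theta\bigl(|S'|\tbinom{d/3}{2}q_3^2p^2\bigr)=\Theta(C^{2-2/80}d)$, which dominates $C^{7/4}d$ for $C$ sufficiently large.

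The main obstacle is to translate this estimate on the number of \emph{potential} good grandchildren into a matching lower bound on the number of good grandchildren actually attached to $T_S$ during MOG. A direct appeal to \cref{lem:generalstep} with the restricted neighborhood $K_{\mathrm{good}}\subseteq N_{Q^d_p}(T_S)\cap L_{i+1}[V_3]$ of good potential \emph{children} fails, since $|K_{\mathrm{good}}|=\Theta(C^{1-1/80}d)$ falls short of the $7Cd$ threshold of the lemma (the mismatch arising from the small factor $q_3=C^{-1/80}$). Overcoming this requires analysing both sub-steps of MOG jointly along the cascading good paths: since $T_S$ is not merged and the partition is well-spread, a typical percolated neighbor of $T_S$ in $L_{i+1}[V_3]\cup L_i[V_3]$ is adjacent to only $O(1)$ competing trees, so MOG rules \ref{item:A2}--\ref{item:A3} attach a positive fraction of these good grandchildren to $T_S$, producing $|M_S|\geq C^{7/4}d$ and completing the verification of item \ref{i:grow}.
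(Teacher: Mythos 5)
Your proposal diverges from the paper's proof in a crucial way, and the divergence creates a gap that you yourself flag but do not close. The paper's argument applies \cref{lem:generalstep} to the \emph{unrestricted} neighbourhood $K=N_{Q^d_p}(S\cap L_{i+2})\cap L_{i+1}[V_3]$, whose size is $\Theta(C^{1+1/13-1/80}d)\geq 7Cd$; only \emph{after} this (and conditionally on not merging) does the paper locate a short sub-segment $K'_j$ of $S\cap L_{i+2}$ of size $C^{-1/10}d$ whose attached children number $\geq 2C^{7/8}d$ by pigeonhole, splits $\ind(K'_j)$ into two halves $J_1,J_2$, and picks the half dropped by fewer children — so $I_S$ is defined \emph{a posteriori}, as a function of which children actually got attached. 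By contrast, you pre-commit to $I_S\subseteq\ind(S')$ and then try to attach only the ``good'' children; this forces you to run \cref{lem:generalstep} on the set $K_{\mathrm{good}}$ of size $\Theta(C^{1-1/80}d)$, which is below the $7Cd$ threshold. The threshold is not a technicality: in the case where most of $K_{\mathrm{good}}$ is processed according to \ref{item:A1}, \cref{lem:MOGmergeprob} gives a merge-failure probability of about $\exp(-|K_{\mathrm{good}}|/6C)=\exp(-\Theta(C^{-1/80}d))$, which is not $o(2^{-d}/d)$, whereas $|K|\geq 7Cd$ yields $\exp(-7d/6)=o(2^{-d}/d)$. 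Your final paragraph acknowledges this and gestures at ``analysing both sub-steps of MOG jointly along the cascading good paths,'' but conditioning on $T_S$ not being merged does not let you bypass this bound; the essential difficulty is exactly what the paper's reordering is designed to circumvent.

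There is also a numerical slip in your support estimate: with $|S'|=d/6$ and $i\geq m_3=d/2-d^{0.6}$, the quantity $(i+2)-(2|S'|-1)=i+3-d/3\geq d/6-d^{0.6}+3$ is \emph{not} at least $d/6+2$. The bound is repairable — along the subpath of $S$ spanning $S'$, each round trip $L_{i+2}\to L_{i+3}\to L_{i+2}$ can delete at most one element from the running joint support, so the correct lower bound is $(i+2)-(|S'|-1)\geq d/3-d^{0.6}+3$, which comfortably exceeds $d/6$ — but as written the inequality is false. Finally, the ``Chernoff concentration'' for the count of length-two paths from $S'$ deserves more care (the indicators are not independent since paths share edges), and the extension from $L_{i+1}$ to $L_i$ needs a second application of \cref{lem:generalstep}-type reasoning rather than just a count of percolated neighbours; the paper handles the latter by working inside the subcube $Q'=\{v: I_S\subseteq\ind(v)\}$ after $I_S$ is fixed, which you could adopt once the ordering issue is repaired.
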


 \begin{proof}
 Note that, since $P$ was added in the previous iteration of MOG, the tree $T_S$ is equal to the segment $S$, which is a subpath of $P$ of length $C^{1/13}d$. Hence, if we write $K' = V(T_S) \cap L_{i+2}$, then $|K'| = C^{1/13}d/2$. Then, by \cref{thm:LKK,l:binomtech}, 
 \[
 |N_{Q^d}(K') \cap L_{i+1}| \geq i|K'|/3 \geq d|K'|/7.
 \]
Recall that, during one iteration of MOG, we expose the edges of $Q^d_p$ as well as the partition $(V_1,V_2,V_3)$ of the new layers (where a vertex lands in $V_3$ with probability $q_3= C^{-1/80}$). 
By setting $K \coloneq N_{Q^d_p}(K') \cap L_{i+1}[V_3]$, a Chernoff bound implies that, with probability $1-o(2^{-d}/d)$, $|K| \geq C^{79/80}|K'|/10$.

Thereafter, \cref{lem:generalstep} implies that, with probability $1-o(2^{-d}/d)$, either $T_S$ is used for a merge in the iteration of MOG on $L_{i,i+1}$, or MOG attaches to $T_S$ at least $C^{79/80}|K'|/10\cdot 1/5$ vertices from the set $L_{i+1}[V_3]$. We assume the latter, as otherwise \emph{\ref{i:merge}} holds.

Let us divide the segment $S$ into $t=C^{1/10+1/13}$ paths of equal length and let $K'_1,\ldots, K'_t$ be their intersections with $L_{i+2}$, so that $K' = \bigcup_{j=1}^t K'_j$ and $|K'_j| = C^{-1/10}d$ for all $j$. Then, by the pigeonhole principle, there is some $j \in [t]$ such that
\[
|N_{Q^d_p}(K'_j) \cap L_{i+1}[V_3]| \geq C^{79/80 - 1/10 - 1/13} |K'|/50 \geq 2 C^{7/8} d.
\]Let us write $K_j \coloneqq N_{Q^d_p}(K'_j) \cap L_{i+1}[V_3]$.

Since the vertices in $K'_j$ are joined in $Q^d$ by a path of length at most $C^{-1/10}d$, all such vertices agree on at least $(1-C^{-1/10})d$ coordinates. Furthermore, all vertices in $K'_j$ lie in $L_{i+2}$ and thus have support of size at least $m_3$. It follows that $|\ind(K'_j)| \geq m_3 - C^{-1/10}d \geq d/3+2$.

Let us split $|\ind(K'_j)|$ into two sets, $J_1$ and $J_2$, of (nearly) equal size. 
Again, by the pigeonhole principle, without loss of generality, at least $|K_j|/2 \geq C^{7/8}d$ of the vertices in $K_j$ differ from its neighbour in $K'_j$ by a coordinate outside $J_2$.

Let $M'_S \subseteq K_j$ be a subset of size $|M'_S| = C^{7/8}d$ with $\mathds{1}(M'_S)\subseteq J_2$.
Then, it follows that $|\ind(M'_S)| \geq d/6 +1$, and so there exists some subset $I_S \subseteq \ind(M'_S) \setminus \{1\}$ of size $d/6$.

We now run a similar argument to extend the vertices in $M'_S$ down to $L_i$ without changing the coordinates in $I_S$. To do so, we consider the subcube $Q' \subseteq Q^d$ formed by taking all vertices $v$ such that $I_S \subseteq \ind(v)$ which, in particular, contains $M'_S$. 
Then, $Q'$ has dimension $5d/6$ and so, by another application of \cref{l:binomtech}, we have that
\[
|N_{Q'}(M'_S) \cap L_i| \geq d|M'_S|/7. 
\]
Hence, by the Chernoff bound, with probability $1-o(2^{-d}/d)$, writing $M_S\coloneqq N_{Q'_p}(M'_S)\cap L_{i+1}[V_3]$ we have
\[
|M_S| \geq  C^{79/80}|M'_S|/10 \geq C^{7/4} d,
\]
and $I_S \subseteq \ind(M_S)$ by construction. 
\end{proof}

Finally, using Lemmas~\ref{lem:generalstep} and~\ref{lem:step 1}, we can prove Proposition~\ref{prop:2(a)}.

\begin{proof}[Proof of Proposition~\ref{prop:2(a)}]
For every segment $S$ present at some point of MOG, denote by $i(S)$ the largest even $i$ such that $S\in \cS^i$. Note that $i(S)\ge m_3$ by the MOG description. 
We show inductively that, for every even $j\in [m_2,m_4-2]$ and $S \in \cS^{j}$ with $i(S) > j$, there is a subset $I_S \subseteq \{2,\ldots, d\}$ of size $d/6$ such that, with probability $1-o((i(S)-j)2^{-d}/d)$, the corresponding tree $T_S \in \cF^{j}$ is such that there is a subset $M_j^S \subseteq V(T_S) \cap L_j$ with $I_S \subseteq \ind(M_j^S)$ of size
\[
|M_j^S| \geq \min\{  C^{(12 + i(S)-j)/8}d, d^{C^{3/4}+5} \}.
\]
Given this inductive argument, when $j=m_2$, for every $S\in \cS^j$, the above inequality holds with probability $1-o((i(S)-m_2)2^{-d}/d)=1-o(2^{-d})$. A union bound over the at most $2^d/d$ segments and at most $d$ choices for $j$ completes the proof. We thus turn now to the proof of the inductive statement.

The base case $j=m_4-2$ is satisfied with probability $1- o(2^{-d}/d)$ with $I_S$ and $M_{m_4-2}^S = M_S$ as given by \cref{lem:step 1}, for all $S\in \cS^{m_2}$.

Fix even $j\in [m_2,m_4-4]$ and suppose that the claim holds for all even $j'\in [j+2,m_4-2]$. Fix $S \in \cS^{j+2}$ and a set $M_{j+2}^S \subseteq V(T_S) \cap L_{j+2}$ of size
\[
|M_{j+2}^S| \geq \min\{  C^{(10 + i(S)- j)/4}d, d^{C^{3/4}+5}\}
\]
with $I_S \subseteq \ind(M_{j+2})$. If $|M_{j+2}^S| \geq d^{C^{3/4}+5}$, then choose an arbitrary $K \subseteq M_{j+2}^S$ of size precisely $d^{C^{3/4}+5}$, otherwise let $K=M_{j+2}^S$.

We split the rest of the analysis into two cases according to whether $j=m_2$ or not.

\hspace{3mm}
\\ \noindent \textbf{Case 1: $j> m_2$.} 
As in the proof of \cref{lem:step 1}, we consider the subcube $Q'$ consisting of vertices $v$ with $I_S \subseteq \ind(v)$, which has dimension $5d/6$. Note that, in this subcube, $K$ lies in the $(j + 2 - d/6)$-th layer.

Since $|K| \leq d^{C^{3/4}+5}$, by \cref{thm:LKK,l:binomtech} it follows that 
\[
|N_{Q'}(K) \cap L_{j+1}| \geq \frac{|K| (j + 2 - d/6)}{2C^{3/4}+10} \geq \frac{d |K|}{8C^{3/4}}. 
\]
Subsequently, since $|K| \geq C^{(10 + i(S)- j)/4}d\ge C^{10/4}d$, by the Chernoff bound, with probability $1-o(2^{-d}/d)$,
\[
|N_{(Q')_p}(K) \cap L_{j+1}[V_3]| \geq \frac{1}{2} \cdot
 \frac{C}{d} \cdot q_3 \cdot \frac{d |K|}{8C^{3/4}} \geq 5C^{1/8}|K|.
\]

Hence, by Lemma \ref{lem:generalstep}, either $T_S$ is used in a merge in the iteration of MOG on $L_{j,j+1}$ and thus $S\notin \cS^j$, or, with probability $1-o(2^{-d}/d)$, after processing the vertices in $B_j \cap L_{j+1}$, 
\[
|V(T_S) \cap L_{j+1}[V_3] \cap V(Q')| \geq C^{1/8}|K|.
\]

By an analogous argument, after we have processed the vertices in $B_j \cap L_j$, we can conclude that either $S\notin \cS^j$ or
\[
|V(T_S) \cap L_{j}[V_3] \cap V(Q')| \geq C^{1/4}|K| \geq  \min\{  C^{(12 + i(S)-j)/8}d, d^{C^{3/4} +5} \}.
\]
Then, by choosing $M_j^S$ to be a subset of $V(T_S) \cap L_{j}[V_3] \cap V(Q')$ of the appropriate size, we have $I_S \subseteq M_j^S$ since $M_j^S \subseteq V(Q')$. 

\hspace{3mm}
\\ \noindent \textbf{Case 2: $j=m_2$.}
Fix a segment $S$ as before and note that
\[
i(S)-j=i(S)-m_2 \geq m_3 - m_2 \geq d^{0.6} \geq C^{3/4} \log d ,
\]
and so the induction hypothesis implies that $|K|= d^{C^{3/4}+5}$. 

Furthermore, note that $|N_{Q'}(K) \cap V(Q_0)| \geq |K|/d$, since every vertex in $K$ has at least one neighbour in $L_{m_2} \cap V(Q_0)$, and the degree of $Q^d$ is $d$.

Arguing by a similar argument as before with \cref{thm:LKK,l:binomtech} together with the Chernoff bound, we can conclude that either $T_S$ is used for a merge during the iteration on MOG on $L_{m_2}$, or the tree $T_S \in \cF^{m_2}$ is such that, with probability $1-o(2^{-d}/d)$,
\[
|V(T_S) \cap L_{m_2}[V_3] \cap V(Q') \cap V(Q_0)| \geq p \cdot q_3 \cdot |K| / d \geq d^{C^{3/4}},
\]
and choosing $M_{m_2}^S$ to be any subset of these vertices of size exactly $d^{C^{3/4}}$ satisfies the conclusion of \cref{prop:2(a)}. 
In both cases, the probability of failure is $o((i(S)-(j+2))2^{-d}/d)+o(2^{-d}/d)=o((i(S)-j)2^{-d}/d)$, as required.
\end{proof}

\subsection{\texorpdfstring{Connecting almost all paths in $\cP^{m_2}$ in the subcube $Q_0$}{}}\label{sec:2(b)}

We now turn to the proof of Proposition~\ref{prop:2(b)}. For this, we will analyse how the trees in $\cF^{m_2}$ continue to grow as we iterate MOG down from $L_{m_2}$ to $L_{m_1}$. 
To this end, we will introduce a simpler process where the growth of trees is stochastically dominated by the growth of the random trees in MOG. 
We call it the \textit{alternative growth procedure} (AGP for short). The AGP grows graphs in a mixed-percolated hypercube $(Q_0)_p(1/2)$.

By coupling the AGP with the trees grown using MOG from a fixed pair of segments, we show that the conclusion of \cref{prop:2(b)} holds with high enough probability to deduce the result via a union bound.

\paragraph{Alternative growth procedure for a graph $T$:}

Fix connected graphs $T_1,T_2 \subseteq L_{m_2,d}$ and a mixed-percolated hypercube $(Q_0)_p(1/2)$. 
Define $H$ to be the restriction of $(Q_0)_p(1/2)$ to $L_{m_1,m_2}$, and let $K \subseteq L_{m_1,m_2}$. 
Then, we write $\agp(T_i,K)$ for the connected subgraph of $(H \cup T_i) \setminus K$ induced by the vertices which can be reached from $T$, by following monotone decreasing paths in $H$. Similarly, we write $\agp(T_1,T_2,K) = \agp(T_1,K) \cup \agp(T_2,K)$. When $K = \varnothing$, we will simply write $\agp(T) = \agp(T,\varnothing)$ and $\agp(T_1,T_2)=\agp(T_1,T_2,\varnothing)$.

\vspace{1em}

We first motivate why coupling AGP with the trees grown using MOG is, in fact, quite natural. 
Note that, if $T\in \cF^{m_2}$ and $T$ was not used in a merge during MOG, then, at each iteration of MOG, every vertex which was revealed to be adjacent in $Q^d_p$ to $T$ and was processed according to \ref{item:A2} or \ref{item:A3} is appended to the tree $T$ with probability at least $1/2$. 
Hence, putting aside vertices adjacent in $Q^d_p$ to $T$ and processed according to \ref{item:A1} (of which, since $T$ was not used in a merge, by \Cref{lem:MOGmergeprob} there are typically few), 
one can see that the tree in $\cF^{m_1}$ which extends $T$ will dominate the graph obtained by gradually attaching each neighbour of $T$ in $Q^d_p$ to $T$ with probability $1/2$.
At the same time, AGP is actually significantly easier to analyse than MOG due to the presence of only one or two trees in the process.

More formally, the connection between AGP and MOG is exhibited by the following coupling lemma.

\begin{lemma}\label{lem:domination}
Fix a mergeable pair of segments $S,S' \in \cS^{m_2}$ with corresponding trees $T_S,T_{S'} \in \cF^{m_2}$. Then, there exists a coupling $\cC$ of the AGP (that is, a pair $((Q_0)_p(1/2),K)$) with the iterations of MOG on the layers in $L_{m_1,m_2-1}$ such that, under the coupling $\cC$, exactly one of the following holds. 
\begin{enumerate}[label=\emph{(\alph*)}]
    \item
    \label{i:onemerged} At least one of $S,S'$ is not in $\cS^{m_1}$, that is, at some point a tree corresponding to $S$ or $S'$ is used for a merge.
    \item
    \label{i:notmergeable} $S,S' \in \cS^{m_1}$ are not a mergeable pair, that is, there is some $P \in \cP^{m_1}$ such that $\{P^+,P^-\} = \{S,S'\}$.
    \item
    \label{i:prune}  $S,S' \in \cS^{m_1}$ are a mergeable pair and there exists a set $K \subseteq L_{m_1,m_2-1}$ such that,
    for the corresponding trees $\hat{T}_S,\hat{T}_{S'} \in \cF^{m_1}$, $V(\agp(T_S,K))\subseteq V(\hat{T}_S)$ and 
$V(\agp(T_{S'},K))\subseteq V(\hat{T}_{S'})$.   
Furthermore, there exists an event $\cE=\cE(S,S')$ which holds with probability $1-o(4^{-d})$ such that, conditionally on $\cE$, the set $K$ has size at most $d^3$.
\end{enumerate}
\end{lemma}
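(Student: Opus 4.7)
The plan is to construct the coupling by running MOG and a mixed percolation on $Q_0$ in parallel, sharing the edge-randomness of $Q^d_p[Q_0 \cap L_{m_1,m_2}]$ with the edge-percolation underlying $(Q_0)_p(1/2)$ and, for each $v \in V(Q_0) \cap L_{m_1,m_2}$, sampling an independent uniform $U_v \in [0,1]$. I would declare $v$ alive in the vertex-percolation iff $U_v \leq 1/2$, and whenever MOG's rule~\ref{item:A2} is applied to $v$ with one of the two involved trees being the descendant $\hat{T}_S$ of $T_S$ (or $\hat{T}_{S'}$ of $T_{S'}$) in $\cF^{i+1}$, attach $v$ to that descendant iff $U_v \leq 1/2$. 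This guarantees that any alive $v$ adjacent via $Q^d_p$ only to $\hat{T}_S$ (and possibly to its sibling tree in its own path) is automatically attached to $\hat{T}_S$ by MOG.

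After running MOG under this coupling, exactly one of three outcomes will hold: case~\ref{i:onemerged} if $\hat{T}_S$ or $\hat{T}_{S'}$ is ever used in a merge via~\ref{item:A1}; case~\ref{i:notmergeable} if both extensions survive and the paths of $S$ and $S'$ eventually merge so that $\{P^+, P^-\} = \{S, S'\}$ for some $P \in \cP^{m_1}$; case~\ref{i:prune} otherwise. In case~\ref{i:prune}, I would set $K = K_S \cup K_{S'}$, where $K_S \subseteq L_{m_1,m_2-1}[Q_0]$ is the set of every $v$ processed via~\ref{item:A1} with $v$ adjacent in $Q^d_p$ to $\hat{T}_S$ (at the iteration processing $v$) and $\hat{T}_S$ not in the selected mergeable pair, and $K_{S'}$ is defined analogously. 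I would then verify the inclusion $V(\agp(T_S,K)) \subseteq V(\hat{T}_S)$ by a downwards induction along any monotone decreasing path $T_S \ni u_0, u_1, \ldots, u_\ell = v$ in $H \setminus K$: for $i \geq 1$, $u_i$ is alive and $u_{i-1}u_i \in Q^d_p$, so at the iteration processing $u_i$ it is adjacent (via $u_{i-1} \in \hat{T}_S$ by induction) to $\hat{T}_S$; since $u_i \notin K_S$ and case~\ref{i:onemerged} does not occur, rule~\ref{item:A1} cannot divert $u_i$ from $\hat{T}_S$, and under the coupling rules~\ref{item:A2} and~\ref{item:A3} then attach $u_i$ to $\hat{T}_S$.

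To bound $|K|$, let $Y_S$ be the number of MOG iterations in which~\ref{item:A1} is applied to a vertex adjacent to $\hat{T}_S$, and define $\cE_S := \{Y_S < d^3/2\} \cup \{\hat{T}_S \text{ is in some selected merge pair}\}$ and $\cE := \cE_S \cap \cE_{S'}$. By~\Cref{lem:MOGmergeprob}, each such iteration has conditional probability at least $1/(3C)$ of selecting $\hat{T}_S$, so the probability that $\hat{T}_S$ is never selected despite $Y_S \geq d^3/2$ is at most $(1 - 1/(3C))^{d^3/2} \leq \e^{-d^3/(8C)} = o(4^{-d})$, giving $\mathbb{P}[\cE] = 1 - o(4^{-d})$ by a union bound. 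On $\cE$, case~\ref{i:prune} forces $Y_S, Y_{S'} < d^3/2$, so $|K| = |K_S| + |K_{S'}| = Y_S + Y_{S'} < d^3$. The main obstacle will be the careful case analysis in the induction: I will need to verify that, under the coupling, the attachment of each $u_i$ to $\hat{T}_S$ holds in every sub-configuration of MOG's rules~\ref{item:A1}--\ref{item:A3}, including the cases where $u_i$ is adjacent to $\hat{T}_S$, its sibling, and possibly several other trees simultaneously.
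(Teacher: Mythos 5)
Your proposal is correct and follows essentially the same approach as the paper: the paper constructs the coupling using Bernoulli($1/2$) indicators $Z_v$ that play exactly the role of your indicator $\ind\{U_v\le 1/2\}$, and it feeds these same bits into both MOG's \ref{item:A2} coin flips (whenever the relevant tree is the extension of $T_S$ or $T_{S'}$) and the vertex-percolation of $(Q_0)_p(1/2)$; the deterministic inclusion $V(\agp(T_S,K))\subseteq V(\hat T_S)$ is then proved by the same layer-by-layer (equivalently, along monotone paths) induction, and the bound $|K|\le d^3$ on a high-probability event is obtained via the same appeal to \cref{lem:MOGmergeprob}. The only cosmetic differences are that the paper phrases the \ref{item:A2} coupling rule slightly more carefully as ``adjacent to the tree extending $T_S$ but not the tree extending $T_{S'}$'' (which removes the ambiguity your phrasing has when both $\hat T_S$ and $\hat T_{S'}$ appear in a \ref{item:A2} step, a situation that only arises in case~\ref{i:notmergeable} and so is harmless), and the paper derives the $|K|\le d^3$ bound by a pigeonhole over iterations before applying \cref{lem:MOGmergeprob}, whereas you apply it directly to a sequence of $d^3/2$ vertices — both routes give the same $(1-1/(3C))^{\Theta(d^2)}=o(4^{-d})$ bound. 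One small slip worth noting: since $K_S$ and $K_{S'}$ may overlap, you should write $|K|\le |K_S|+|K_{S'}|$ rather than equality, but this is in the right direction and does not affect the conclusion.
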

\begin{proof}
Let $(Z_v)_{v\in V(Q_0)}$ be a sequence of independent Bernoulli$(1/2)$ random variables.
We run the iterations of MOG on the layers $L_{m_1,m_2-1}$, where, whenever a vertex $v$ is processed according to \ref{item:A2} and is adjacent to the tree extending $T_S$ but not the tree extending $T_{S'}$, then we append it to $T_S$ if and only if $Z_v=1$. Let us write $\hat{K}(S)$ for the set of vertices processed in this manner. Similarly, if $v$ is processed according to \ref{item:A2} and is adjacent to the tree extending $T_{S'}$ but not the tree extending $T_{S}$, then we append it to $T_{S'}$ if and only if $Z_v=1$. Let us write $\hat{K}(S')$ for the set of vertices processed in this manner. 

Denote by $K$ the set of vertices which are processed by MOG according to \ref{item:A1} and are adjacent to the (growing) trees extending $T_S$ or $T_{S'}$. We couple $\agp(T_S,T_{S'},K)$ with the process described above by identifying edge-percolation in $H$ with edge-percolation in $Q^d$ processed by MOG, and by including each vertex in $\hat{K}(S)\cup \hat{K}(S')$ in $(Q_0)_p(1/2)$ if $Z_v=1$, and all other vertices according to Bernoulli($1/2$) random variables, independent from the current construction. This completes the description of our coupling $\cC$.

Clearly, it cannot happen that two among \emph{\ref{i:onemerged}}, \emph{\ref{i:notmergeable}} and \emph{\ref{i:prune}} simultaneously occur. 
In the sequel, we suppose that neither \emph{\ref{i:onemerged}} nor \emph{\ref{i:notmergeable}} occurs --- if some of them take place, we abandon the coupling automatically.
In this case, we show that \emph{\ref{i:prune}} occurs. 

We claim inductively that, at the iteration of MOG on $L_{j,j+1}$ for an even $j \in [m_1,m_2]$, the set of vertices in the tree extending $T_S$ contains $L_{j,j+1}[V(\agp(T_S, K))]$. Indeed, the claim clearly holds for $j=m_2$. 
For the induction step, any neighbour $v$ in $Q^d_p$ of $T_S$ processed according to \ref{item:A1} belongs to $K$, and therefore does not belong to $V(\agp(T_S, K))$. For neighbours processed according to \ref{item:A2}, since we assumed that \emph{\ref{i:onemerged}} and \emph{\ref{i:notmergeable}} do not occur, at this moment $S$ and $S'$ are still a mergeable pair. 
Hence, if $v$ is processed according to \ref{item:A2}, it is not adjacent to $T_{S'}$, and so $v \in \hat{K}(S)$ is attached to $T_S$ if and only if $Z_v =1$. Hence, if $v$ is contained in $\agp(T_S,K)$, then $Z_v =1$ and $v$ is attached to $T_S$ at this iteration of MOG. A similar argument holds for $S'$.

It remains to prove that $|K|\leq d^3$ with probability $1-o(4^{-d})$. Indeed, the event $|K| > d^3$ implies that there exists an even $j\in [m_1,m_2]$ such that, at the iteration of MOG on $L_{j,j+1}$, some of these layers contain at least $2d^3/(m_2 -m_1) \geq 2d^2$ vertices which are processed according to \ref{item:A1} and are adjacent to the trees extending $T_S$ and $T_{S'}$. 
Hence, by Lemma \ref{lem:MOGmergeprob}, the probability that neither of these vertices is used to merge the extension of $T_S$ or the extension of $T_{S'}$ by MOG is bounded from above by $(1-1/3C)^{2d^2} = o(4^{-d})$. Thus, $|K|\ge d^3$ and \emph{\ref{i:prune}} jointly occur with probability $o(4^{-d})$, as desired.
\end{proof}

\begin{remark}\label{rem:single coupling}
A similar argument shows that, for any $S \in \cS^{m_2}$, there is a coupling of AGP with the iterations of MOG on the layers in $L_{m_1,m_2-1}$ such that, with probability $1-o(4^{-d})$, either $S \not\in \cS^{m_1}$, or $|K|\leq d^3$ and $V(\agp(T_S,K))\subseteq V(\hat{T}_S)$ where $\hat{T}_S \in \cF^{m_1}$ is the tree corresponding to $S$.
\end{remark}

The next lemma is the last tool needed to deduce \cref{prop:2(b)}. Roughly, given trees $S,S' \in \cF^{m_2}$ with $|I_S \cap I_{S'}| \geq d/10^3$, it uses the self-similarity properties of the hypercube to guarantee that we can find `many' disjoint paths from $T_{S}$, resp. $T_{S'}$, to $L_{m_1}$ in $\agp(T_S)$, resp. $\agp(T_{S'})$. However, if there are more than $|K|$ of these paths, then $\agp(T_S,K)$ and $\agp(T_{S'},K)$ must share vertices in $L_{m_1}$, which will exclude the third case of \cref{lem:domination}.

\begin{lemma}\label{lem:paths in AGP}
Let $T_1,T_2\subseteq L_{m_2,d}$ be trees (not necessarily distinct). Suppose that for each $T\in \{T_1,T_2\}$, there exist a set $M_T\subseteq L_{m_2}$ of leaves of $T$  of size $d^{C^{3/4}}$, and a set $I_T\subseteq [d]$  of coordinates of size $d/6$ such that $I_T\subseteq \ind(M_T)$. 
In addition, assume that $|I_{T_1}\cap I_{T_2}|\geq d/10^3$.
Then the following holds with probability $1-o(4^{-d})$. 

For every $I\subseteq I_{T_1} \cap  I_{T_2}$ with $|I|=100\log d=2m_1$, there is a set $Z_I \subseteq L_{m_1}[Q_0]$ with $|Z_I|=2d^{20}$ which is contained in both $\agp(T_1)$ and $\agp(T_2)$, considered as subgraphs of $\agp(T_1,T_2)$, such that $\mathbb{T}(Z_I) \subseteq I$. Furthermore, each $z\in Z_I$ is connected in $\agp(T_1,T_2)$ to $T_1$ via a pair of internally vertex-disjoint monotone paths, and similarly to $T_2$ via a pair of internally vertex-disjoint monotone paths; furthermore, these paths are disjoint for distinct $z$.
\end{lemma}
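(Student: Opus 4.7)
The plan is to combine the subcube decomposition of \cref{lem:firstsubcubes} with the monotone-path existence provided by \cref{lem: monotone paths}. First, I fix a set $\hat I$ with $I \subseteq \hat I \subseteq I_{T_1} \cap I_{T_2}$ and $|\hat I| = \lfloor d/10^3 \rfloor$, which is possible since $|I| = 2m_1 = O(\log d)$. Let $Z = \{z \in L_{m_1}[Q_0] : \ind(z) \subseteq I\}$; by Stirling, $|Z| = \binom{2m_1}{m_1} = d^{100\log 2 - o(1)} \gg d^{40}$. Applying \cref{lem:firstsubcubes} with parameters $(\hat I, m_1, m_2, Z, S = M_{T_1} \cup M_{T_2})$ yields, for every pair $(z, s)$, a vertex $u(z,s) \in L_{m_1 + m_2 - |\hat I|}$ that decomposes a potential monotone path from $z$ to $s$ into a bottom half in $Q[z; u(z,s)]$ (dimension $m_2 - |\hat I| = \Theta(d)$) and a top half in $Q[u(z,s); s]$ (dimension $|\hat I| - m_1 = \Theta(d)$). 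The lemma plus \cref{o:subcubes} give: (i) for $s \ne s'$ the top cubes $Q[u(z,s); s]$ and $Q[u(z',s'); s']$ are disjoint regardless of $z, z'$; (ii) for $z \ne z'$ the bottom cubes $Q[z; u(z,s)]$ and $Q[z'; u(z',s')]$ are disjoint.

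For each $(z, s)$, \cref{lem: monotone paths} applies to each of the bottom and top cubes (provided $C$ is large enough so that $\alpha = p D > \e$ and $q = 1/2 > \e/\alpha$ in each), yielding monotone paths from $z$ to $u(z,s)$ and from $u(z,s)$ to $s$ within $(Q_0)_p(1/2)$ with joint probability at least $d^{-11}$. These concatenate into a monotone $z$-$s$ path. For fixed $z$, conditioning on the percolation restricted to the layers $L_{m_1,\,m_1+m_2-|\hat I|}$ determines whether each bottom path exists; given this conditioning, the top-cube events are mutually independent across $s$ by (i). Combining a Galton--Watson style exploration from $z$ upwards (similar in spirit to the analysis in \cref{sec:2(a)}) with a Chernoff bound then shows that, with probability $1-\exp(-\Omega(d))$, the number of good $s \in M_T$ (those yielding a full $z$-$s$ path) is at least $d^{30}$, for each of $T \in \{T_1, T_2\}$.

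To obtain two internally vertex-disjoint monotone paths from $z$ to $T_1$, pick two good $s_1 \ne s_1' \in M_{T_1}$ and a coordinate $i \in \ind(u(z, s_1)) \setminus \ind(u(z, s_1'))$, which is nonempty since $u(z, s_1) \ne u(z, s_1')$ (otherwise $s_1 = s_1'$, as the map $s \mapsto \ind(s) \setminus \hat I$ is injective on $M_T$). We additionally require the bottom path for $s_1$ to begin by adding the coordinate $i$ at its first step, at a cost of only a $\mathrm{poly}(d)$-factor in the per-$s$ probability. Monotonicity then forces every internal vertex of this path to have $i$ in its support, while every vertex of the bottom path for $s_1'$ lies in $Q[z; u(z, s_1')]$, whose vertices have supports contained in $\ind(u(z, s_1'))$, which excludes $i$. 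Hence the two bottom halves share only $z$; combined with the disjointness of the top halves from (i), the two full paths are internally vertex-disjoint. The same argument handles $T_2$. Across distinct $z \ne z'$, the bottom halves are disjoint by (ii), and the top halves are disjoint as long as we pick distinct $s$'s for distinct $(z,s)$-pairs, which is easy since $|M_T| = d^{C^{3/4}}$ vastly exceeds the $O(d^{20})$ distinct $s$'s required.

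Finally, a union bound over the $\binom{|I_{T_1} \cap I_{T_2}|}{2m_1} \le d^{O((\log d)^2)}$ choices of $I$ and the at most $\binom{d}{m_1} = d^{O(\log d)}$ candidate $z$'s yields the claimed $1 - o(4^{-d})$ total error, since each per-$z$ failure probability is $\exp(-\Omega(d))$. The main obstacle I expect is ensuring internal vertex-disjointness of the two monotone paths to a single $T_i$ at each $z$, resolved by the ``distinguishing first coordinate'' trick above. A secondary technical challenge is controlling correlations between the bottom-cube events for different $s$ at fixed $z$, which is handled by the conditioning argument together with an exploration-based lower bound on the reachable set $R_z$ from $z$.
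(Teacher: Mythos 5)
Your high-level framework matches the paper's: use Lemma~\ref{lem:firstsubcubes} to build a family of pairwise-disjoint subcubes between $Z$ and $M_{T_1}\cup M_{T_2}$ via the intermediate vertices $u(z,s)$, then apply Lemma~\ref{lem: monotone paths} to realise monotone paths through each, and finish with concentration plus a union bound over $I$. The key departure is the order in which you reveal the percolation, and this is where the argument breaks.

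You reveal the bottom range $L_{m_1,\,m_1+m_2-|\hat I|}$ first, and for each fixed $z$ rely on a ``Galton--Watson style exploration'' to argue that, with probability $1-\exp(-\Omega(d))$, many $s$'s are reachable from $z$. That probability bound cannot be right: $z$ itself must be retained under the vertex-percolation in $(Q_0)_p(1/2)$, which fails with probability $1/2$, and even conditionally $z$ is isolated with probability $\Omega_C(1)$. So the per-$z$ failure probability is bounded \emph{below} by a constant, not by $\exp(-\Omega(d))$, and your final union bound over the candidate $z$'s (and over choices of $I$) cannot close. Note also that, for fixed $z$, the bottom subcubes $Q[z;u(z,s)]$ for distinct $s$ are far from disjoint (they all contain $z$), so the bottom-path events are heavily correlated; the ``exploration plus Chernoff'' step glosses over this. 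The paper sidesteps both problems by reversing the order: it first reveals the \emph{top}, where the subcubes $Q[u(z,y);y]$ for fixed $z$ and varying $y$ really are pairwise disjoint, so the $d^{C^{3/4}}$ independent attempts give success probability $1-o(2^{-3d})$ for each $z$ -- a bound strong enough to union-bound over all $z$. Only \emph{then} does it look at the bottom, and only at the two specific subcubes $Q[z;u(z,y^1_z)]$ and $Q[z;u(z,y^2_z)]$, handling their overlap with Harris' inequality to get success probability $\Omega(d^{-10})$ per $z$; since the bottom subcubes \emph{are} disjoint across distinct $z$, Chernoff yields a $d^{-11}$-fraction (not all) of the roughly $d^{40}$ candidate $z$'s, which comfortably exceeds $2d^{20}$. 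This ``fraction of $z$'s, not all of them'' step is exactly what you are missing. Finally, the ``distinguishing first coordinate'' trick is creative but unnecessary: the application in Proposition~\ref{prop:2(b)} only needs one path from $z$ to each of $T_1$ and $T_2$ (internally disjoint from each other), which is exactly what the paper constructs.
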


We will prove \cref{lem:paths in AGP} in the next subsection, let us first show that \cref{prop:2(b)} follows from \cref{lem:paths in AGP}

\begin{proof}[Proof of Proposition~\ref{prop:2(b)} assuming \cref{lem:paths in AGP}]
Let us condition on the statement of Proposition~\ref{prop:2(a)}. 
Fix $S,S'\in \cS^{m_2}$ and let $I_S$ and $I_{S'}$ be given by Proposition~\ref{prop:2(a)} and assume that $|I_S \cap I_{S'}| \geq d/10^3$. Finally, let $I \subseteq I_S \cap I_{S'}$ have  size $100\log d$. 

By Lemma \ref{lem:domination}, we can couple the iterations of MOG between $L_{m_2}$ and $L_{m_1}$ with a pair $((Q_0)_p(1/2),K)$ such that exactly one of the following holds with probability $1-o(4^{-d})$.
\begin{itemize} 
    \item At least one of $S,S'$ is not in $\cS^{m_1}$.
    \item $S,S' \in \cS^{m_1}$  are not a mergeable pair.  
    \item $S,S' \in \cS^{m_1}$ and $K \subseteq L_{m_1,m_2}$ is such that $|K|\leq d^3$, $V(\agp(T_S,K)) \subseteq V(\hat{T}_S)$ and $V(\agp(T_{S'},K)) \subseteq V(\hat{T}_{S'})$ where $\hat{T}_S,\hat{T}_{S'} \in \cF^{m_1}$ are the trees corresponding to $S$ and $S'$. 
\end{itemize}
We thus only need to rule out  the third case. Note that, in this case, since $S,S' \in \cS^{m_1}$ are distinct, $V(\hat{T}_S)$ and $V(\hat{T}_{S'})$ are disjoint.

However, by \cref{lem:paths in AGP}, with probability $1- o(4^{-d})$ there is a set $Z_I \subseteq L_{m_1}[Q_0]$ of size $2d^{20}$ such that each vertex in $z$ is joined by a pair of internally vertex-disjoint monotone paths in $\agp(T_S,T_{S'})$ to $T_S$ and $T_{S'}$, and these paths are disjoint for distinct $z$. Since $|K|\leq d^3$, at least $2d^{20} - d^3$ of the vertices $z \in Z_I$ are joined to both $T_S$ and $T_{S'}$ by a monotone path in $\agp(T_S,T_{S'},K)$ and hence lie in $V(\agp(T_S,K)) \cap V(\agp(T_{S'},K)) \subseteq V(\hat{T}_S) \cap V(\hat{T}_{S'})$, contradicting the fact that they are disjoint.

Taking a union bound over the at most $4^d$ possible pairs of segments $S \in \cS^{m_2}$ completes the proof of the first part of \cref{prop:2(b)}.

For the second part of \cref{prop:2(b)}, given $S \in S^{m_1}$, we apply \cref{rem:single coupling} to $S \in \cS^{m_2}$ and \cref{lem:paths in AGP} to the pair $T_S,T_S \in \cF^{m_2}$ and some subset $I \subseteq I_S$ of size $|I| = 2m_1$, and an argument similar to the above shows the existence of an appropriate set of leaves $Z_I \subseteq L_{m_1}$ with $|Z_I| = d^{20}$ and $\mathbb{T}(Z_I) \subseteq I$ with probability $1-o(4^{-d})$. Again, a union bound over the at most $2^d$ segments $S \in \cS^{m_2}$ completes the proof.
\end{proof}

\subsection{\texorpdfstring{Growing trees in $(Q_0)_p(1/2)$}{}}
In this section, we prove \cref{lem:paths in AGP}. Roughly the idea will be to use \cref{lem:firstsubcubes} to find, for each relevant $I$, a large set of disjoint subcubes whose top and bottom points lie in $M_{T_i}$ and $L_{m_1}$, respectively, and then to use \cref{lem: monotone paths} to show that, for many of these subcubes, there is a monotone path from $L_{m_1}$ to $M_{T_i}$ in the AGP.

For technical reasons, we do this in two steps: first, we connect vertices in $M_{T_i}$ to an intermediate layer in the described manner, and then connect the intermediate vertices with~$L_{m_1}$.

\begin{proof}[Proof of \cref{lem:paths in AGP}]
Let $T_1$, $T_2$, $M_{T_1}$, $M_{T_2}$, $I_{T_1}$ and $I_{T_2}$ be as in the statement of \cref{lem:paths in AGP}. Let $M = M_{T_1} \cup M_{T_2}$, let $I :=I_{T_1} \cap I_{T_2}$ and let $I' \subseteq I$ be a subset of size $|I'| = 2m_1$. Note that, by the assumptions of \cref{lem:paths in AGP}, $I \subseteq \ind(M)$ and $d/10^3 \leq |I| \leq d/6$. 

Let $Z = \{ v \in L_{m_1} \colon \ind(z) \subseteq I'\}$. Then, $\mathbb{T}(Z) = I' \subseteq I$ and 
\[|Z|=\binom{|I'|}{m_1}=\binom{2m_1}{m_1}= 2^{m_1+o(m_1)}=d^{50+o(1)}.\] 
By \Cref{lem:firstsubcubes}, there exists a set $\{u(z,y) \colon y\in M, z\in Z\} \subseteq L_{m_1+m_2 - |I|}$ such that 
\begin{enumerate}[\textnormal{(\arabic*)}]
    \item\label{i:top2} for each $z \in Z$, the subcubes $\{Q[u(z,y);y] \colon y \in M\}$ are pairwise disjoint, and
    \item\label{i:bottom2} for each $z \neq z'$ and each pair $y,y' \in M$ (not necessarily distinct), the two subcubes $Q[z;u(z,y)]$ and $Q[z';u(z',y')]$ are disjoint.
\end{enumerate}

Let $(Q_0)_p(1/2)$ be used to generate $\agp(T_1,T_2)$. Recall that, for $i\in \{1,2\}$, every monotone path in $(Q_0)_p(1/2)$ from $M_{T_i}$ which ends in $L_{m_1}$ is included in $\agp(T_i)$.

Since each subcube $Q[u(z,y);y]$ has dimension $|I|-m_1 \geq d/10^4$ and $C$ is sufficiently large, by Lemma~\ref{lem: monotone paths}, each $Q[u(z,y);y]$ contains a monotone path in $(Q_0)_p(1/2)$ from $y$ to $u(z,y)$ with probability $\Omega(d^{-5})$. 
Moreover, by \ref{i:top2}, these events are independent for different $y\in M$. Therefore, since $|M_{T_1}|=|M_{T_2}|=d^{C^{3/4}}$, the Chernoff bound implies that, for every $z\in Z$, with probability $1-o(2^{-3d})$, 
there exist $y^1_z \in M_{T_1}$ and $y^2_z\in M_{T_2}$ such that monotone paths from $y^1_z$ to $u(z,y^1_z)$ and $y^2_z$ to $u(z,y^2_z)$ are realised within their respective hypercubes in $(Q_0)_p(1/2)$. 

Note that the vertices $y^i_z$, $u(z,y^i_z)$ and the monotone paths from $y^i_z$ to $u(z,y^i_z)$ in $Q[u(z,y^i_z);y^i_z]$ for $i\in \{1,2\}$ are determined by the set of edges (and vertices) of $(Q_0)_p(1/2)$ spanned by $L_{m_1+m_2 -|I|,d}$.

In what follows, we suppose that we have exposed these edges and identified the vertices $y^i_z$ and $u(z,y^i_z)$, and the monotone paths from $y^i_z$ to $u(z,y^i_z)$ in $Q[u(z,y^i_z);y^i_z]$ for $i\in \{1,2\}$. 
Given $z \in Z$, let us consider the subcubes $Q[z;u(z,y^1_z)]$ and $Q[z;u(z,y^2_z)]$. Note that these subcubes are of dimension $m_2 - |I|\ge  d/4$ and that, since both subcubes are contained within $L_{m_1,m_1+m_2-|I|}$, the edges in these subcubes have not yet been exposed. Hence, by Harris' inequality (see, e.g., \cite[Theorem 6.3.3]{AS16}) and Lemma~\ref{lem: monotone paths}, the two increasing events that $Q[z;u(z,y^1_z)]$ contains a monotone path from $z$ to $u(z,y^1_z)$ and that $Q[z;u(z,y^2_z)]$ contains a monotone path from $z$ to $u(z,y^2_z)$ hold jointly with probability $\Omega(d^{-10})$.

Furthermore, by \ref{i:bottom2}, for each $z \neq z'$, we have that $Q[z;u(z,y^1_z)] \cup Q[z;u(z,y^2_z)]$ is disjoint from $Q[z';u(z',y^1_{z'})] \cup Q[z';u(z',y^2_{z'})]$. 
Hence, the events of existence of the monotone paths from $z$ to $u(z,y^i_z)$ for $i\in \{1,2\}$ are independent for different $z \in Z$ and, by Chernoff's inequality, with probability $1-o(2^{-3d})$, there is a set $Z_I$ of at least $|Z|/d^{11}\ge 2d^{20}$ such that, for each $z\in Z_I$, the monotone paths from $z$ to $u(z,y^i_z)$ for $i\in \{1,2\}$ are contained in $(Q_0)_p(1/2)\cup T_1\cup T_2$.

Finally, by a union bound over the at most $\binom{d/6}{2m_1}\leq 2^{4m_1^2}$ subsets of $I$ of size $2m_1$, we deduce that, with probability $1-o(4^{-d})$, for every choice of a set $I'$, there exist a set $Z_{I'} \subseteq Z$ of at least $2d^{20}$ vertices such that, for each $z\in Z_{I'}$, there exists paths from $z$ to $u(z,y^i_z)$ and from $u(z,y^i_z)$ to $y^i_z \in M_{T_i}$ for $i\in \{1,2\}$ disjoint for distinct choices of $z$, thanks to \ref{i:top2} and \ref{i:bottom2}. By concatenating these paths, we obtain $|Z_{I'}|$ disjoint paths from $Z_{I'}$ to $T_1$ and from $Z_{I'}$ to $T_2$, completing the proof.
\end{proof} 

\section{Stitching the remaining paths into a cycle}\label{sec: part 3}
We now deduce \cref{thm:main} from \cref{prop:part 2}. 
\begin{proof}[Proof of \cref{thm:main}]
We start by exposing the edges of $H_1 = Q^d_p[L_{m_1,m_2}[Q_0]\cup L_{m_2+1,m_4+1}]$. By~\cref{prop:part 2}, we have that \whp $H_1$ spans a PEF $(\cP_3,\cS_3, \cF_3)$ that satisfies~\ref{item:C2}-\ref{item:C4}. We condition on the existence of such a PEF, and we write $\cP_3 = \{P_1,P_2,\ldots,P_s\}$ for some $s\leq 6$. 

By \ref{item:C4} together with Definition \ref{def: PES}, for every $P\in \cP_3$ and $*\in\{+,-\}$, there exists a set $W_P^*\subseteq T_P^*\cap L_{m_1}[Q_0]$ with $|W_P^*|\ge d^{20}$ such that $J_{P}^*=\mathbb{T}(W_P^*)$ is of size at most $100\log d$. Moreover, by \ref{item:C4}, we may assume that $\{J_{P}^*\colon P\in\cP_3, *\in \{+,-\}\}$ are pairwise disjoint. Let $J\coloneqq \bigcup_{P\in \cP_3, *\in \{+,-\}}J_P^*$, and note that $|J|\le 1200\log d$. 

We will show that, after exposing the remaining edges of $Q^d_p$, \whp\hspace{-1.5mm}, for every $r\in [s]$, there exists a path whose internal vertices lie outside of $L_{m_1,m_2}[Q_0]\cup L_{m_2+1,m_4+1}$, and its endpoints lie in $W_{P_r}^-$ and $W_{P_{r+1}}^+$ (indices seen modulo $s$), and that these paths are vertex disjoint for each $r \in [s]$. Given such a family of paths, it is straightforward to deduce the existence of a cycle in $Q^d_p$ spanning $\interior(\cP_3,\cS_3,\cF_3)$ and, by \ref{item:C3}, such a cycle has length at least $|\interior(\cP_3,\cS_3,\cF_3)|\geq (1-C^{-1/850})2^d$.

Fix an arbitrary set $K\subseteq \{2,\ldots,d\}\setminus J$ of size $2d/3$ and, for each $k \in K$, let $u_k\in L_2$ and $v_k\in L_{d/3+1}$ be the vertices such that $\ind(u_k) = \{1,k\}$ and $\ind(v_k) = [d] \setminus (K \setminus \{k\})$. Note that, for every $k\in K$, we have that $J\subseteq \ind(v_k)$. Then, by \cref{lem:secondsubcube}, the subcubes $Q[u_k;v_k]$ are pairwise vertex-disjoint for different $k\in K$, are subcubes of $Q_1$ of dimension $d/3-1$, and are contained in $L_{1,d/3+1}$. In particular, for each $k \in K$, the edges in $Q[u_k;v_k]$ are yet to be exposed. 

Let us take an arbitrary balanced partition of $K$ into $s$ subsets $K_1,\ldots,K_s$. Then, for every $r \in [s], k \in K_r$ and $u \in W^-_{P_r} \cup W^+_{P_{r+1}}$, let $P(k,u)$ be the path $uu_1u_2$ of length two which starts at $u$, traverses an edge along the first coordinate, and ends with an edge along the $k$-th coordinate, that is, $\ind(u_1)=\ind(u)\cup\{1\}$, and $\ind(u_2)=\ind(u)\cup \{1,k\}$. Note that $k\notin\ind(u)$ since $J\cap K=\varnothing$, and $u_2\in Q[u_k;v_k]$.

We note that the family $\{P(k,u) \colon r \in [s], k \in K_r, u \in W^-_{P_r} \cup W^+_{P_{r+1}}\}$ consists of vertex-disjoint paths with edges yet to be exposed.
Furthermore, these paths share no edges with the subcubes $\{Q[u_k;v_k] \colon k \in K\}$. 
Fix $H_2=Q^d\setminus\left(H_1\cup \{Q[u_k;v_k] \colon k \in K\}\right)$.
Then, each $P(k,u)$ is contained in $H_2$ independently with probability $p^2$.
Thus, since there are at most $\Theta(d)$ pairs $r \in [s]$ and $k \in K_r$, and $|W^-_{P_r}| = |W^+_{P_{r+1}}| = d^{20}$, by the Chernoff bound and a union bound, \whp\hspace{-1.5mm}, for every $r \in [s]$ and $k \in K_r$, there exist vertices $x_{r,k}^-\in W_{P_r}^-$ and $x_{r,k}^+\in W_{P_{r+1}}^+$ such that the paths $P(k,x_{r,k}^*)$ are contained in $H_2$.

For each $r\in [s], k\in K_r$ and $*\in\{+,-\}$, we denote the other endpoint of the path $P(k,x_{r,k}^*)$ by $y_{r,k}^*$ and recall that it lies in $Q[u_k;v_k]$. 
We now expose the edges of $Q^d_p$ inside the subcubes $\{Q[u_k;v_k] \colon k \in K\}$. Since $Q[u_k;v_k]$ has dimension $d/3 -1$, the constant $C$ can be chosen sufficiently large so that the probability a vertex in $Q[u_k;v_k]$ is in the largest component is at least $1/2$ (see, e.g., \cite{AKS81,BKL92}). Hence, by Harris' inequality (see, e.g., \cite[Theorem 6.3.3]{AS16}), for each $r \in [s]$ and $k \in K_r$,
\begin{align*}
\mathbb{P}( y_{r,k}^- &\text{ and } y_{r,k}^+ \text{ are connected in } Q[u_k;v_k]_p) \\&\geq \mathbb{P}( y_{r,k}^- \text{ and } y_{r,k}^+ \text{ lie in the largest component of } Q[u_k;v_k]_p)\ge 1/4.
\end{align*}
Since the subcubes $\{Q[u_k;v_k] \colon k \in K\}$ are vertex-disjoint, these events are independent for distinct pairs $r$ and $k$. Hence, by the Chernoff bound, \whp\hspace{-1.5mm}, for each $r \in [s]$, there exists $k_r \in K_r$ such that $y_{r,k_r}^-$ and $y_{r,k_r}^+$ are connected in $Q[u_k;v_k]_p$ by a path $P'_r$. 

For each $r \in [s]$, the path formed by $P(k_r,x_{r,k_r}^-) \cup P'_r \cup P(k_r,x_{r,k_r}^+)$ joins $W^-_{P_r}$ and $W^+_{P_{r+1}}$ outside of $H_1$. These paths are vertex-disjoint for distinct $r \in [s]$ by construction, which finishes the proof.
\end{proof}

\section*{Acknowledgement}
This research was funded in part by the Austrian Science Fund (FWF) [10.55776/P36131, 10.55776/F1002, 10.55776/ESP624, 10.55776/ESP3863424], and by NSF-BSF grant 2023688. Part of this work was conducted while the second author was visiting IST Austria and TU Graz, and while the first, fourth and fifth authors were visiting the Simons Laufer Mathematical Sciences Institute, within the programs {\em Probability and Statistics of Discrete Structures} and {\em Extremal Combinatorics}. The authors thank these institutions for hospitality.  For open access purposes, the authors have applied a CC BY public copyright license to any author accepted manuscript version arising from this submission.

\bibliographystyle{abbrv}
\bibliography{main} 

\end{document}